\documentclass[reqno,11pt]{amsart}
\pdfoutput=1
\usepackage[margin=1.24in]{geometry}

\title{Conformally compact metrics and the Lovelock tensors}
\date{\today}
\author{Xinran Yu}
\address{University of Illinois Urbana-Champaign}
\email{xinran4@illinois.edu}

\makeatletter
\setcounter{tocdepth}{3}

\renewcommand{\tocsection}[3]{%
  \indentlabel{\@ifnotempty{#2}{\bfseries\ignorespaces#1 #2\quad}}\bfseries#3}
\renewcommand{\tocsubsection}[3]{%
  \indentlabel{\@ifnotempty{#2}{\ignorespaces#1 #2\quad}}#3}
\renewcommand{\tocsubsubsection}[3]{%
  \indentlabel{\@ifnotempty{#2}{\ignorespaces#1 #2\quad}}#3}
\newcommand\@dotsep{4.5}
\def\@tocline#1#2#3#4#5#6#7{\relax
  \ifnum #1>\c@tocdepth 
  \else
    \par \addpenalty\@secpenalty\addvspace{#2}%
    \begingroup \hyphenpenalty\@M
    \@ifempty{#4}{%
      \@tempdima\csname r@tocindent\number#1\endcsname\relax
    }{%
      \@tempdima#4\relax
    }%
    \parindent\z@ \leftskip#3\relax \advance\leftskip\@tempdima\relax
    \rightskip\@pnumwidth plus1em \parfillskip-\@pnumwidth
    #5\leavevmode\hskip-\@tempdima{#6}\nobreak
    \leaders\hbox{$\m@th\mkern \@dotsep mu\hbox{.}\mkern \@dotsep mu$}\hfill
    \nobreak
    \hbox to\@pnumwidth{\@tocpagenum{\ifnum#1=1\bfseries\fi#7}}\par
    \nobreak
    \endgroup
  \fi}
\AtBeginDocument{%
\expandafter\renewcommand\csname r@tocindent0\endcsname{0pt}
}
\def\l@subsection{\@tocline{2}{0pt}{2.5pc}{5pc}{}}
\def\l@subsubsection{\@tocline{2}{0pt}{4.5pc}{5pc}{}}

\makeatother

\newcommand{\nocontentsline}[3]{}

\usepackage{amsmath}
\usepackage{amsfonts, mathrsfs} 
\usepackage{mathtools}

\usepackage{tensor}
\usepackage{slashed} 
\usepackage{stmaryrd} 
\SetSymbolFont{stmry}{bold}{U}{stmry}{m}{n} 

\usepackage{enumitem}

\setlist[enumerate]{leftmargin = 5ex}
\setlist[itemize]{label = {\tiny$\bullet$}, leftmargin = 4ex}

\numberwithin{equation}{section}  

\usepackage{amsthm}
\theoremstyle{plain}
\newtheorem{thm}{Theorem}[section]
\newtheorem{lem}[thm]{Lemma}
\newtheorem{coro}[thm]{Corollary}
\newtheorem{prop}[thm]{Proposition}
\newtheorem{rmk}[thm]{Remark}

\theoremstyle{definition}
\newtheorem{defn}[thm]{Definition}

\newenvironment{ex}
  {\pushQED{\qed}\examplex}
  {\popQED\endexamplex}

\newcommand{\bfemph}[1]{\textbf{\textit{#1}}}
\usepackage{xcolor}
\definecolor{shinbashi}{RGB}{53, 143, 183}   

\usepackage[colorlinks = true,
            linkcolor = shinbashi,
            urlcolor  = shinbashi,
            citecolor = shinbashi,
            anchorcolor = shinbashi]{hyperref}

\usepackage{adjustbox}
\usepackage{graphics}
\usepackage{graphicx}
\graphicspath{ {images/} }
\usepackage{tikz-cd}
\usepackage{tikz-3dplot}

\usepackage{caption}
\usepackage{subcaption}
\usepackage{float}


\newcommand{\bdy}{\big|_{x=0}} 
\newcommand{\Bdy}{\Big|_{x=0}}

\newcommand{\pd}{\partial}
\newcommand{\nablatt}{\widetilde{\nabla}^\trans}

\newcommand{\Rm}{\mathrm{Rm}}
\newcommand{\Ric}{\mathrm{Ric}}
\newcommand{\scal}{\mathrm{scal}}
\newcommand{\hyp}{\mathrm{hyp}}
\newcommand{\qq}{{(2q)}}
\newcommand{\Ricqq}{{\mathrm{Ric}}^{(2q)}}
\newcommand{\scalqq}{{\mathrm{scal}}^{(2q)}}
\usepackage{xifthen}
\newcommand{\ctr}[1][]{
    \mathscr{C}_{\ifthenelse{\isempty{#1}}{g}{#1}}
}

\newcommand{\cA}{\mathcal{A}}

\newcommand{\cC}{\mathcal{C}}

\newcommand{\cF}{\mathcal{F}}
\newcommand{\cG}{\mathcal{G}}

\newcommand{\cI}{\mathcal{I}}

\newcommand{\cK}{\mathcal{K}}
\newcommand{\cL}{\mathcal{L}}

\newcommand{\cQ}{\mathcal{Q}}

\newcommand{\cS}{\mathcal{S}}

\newcommand{\cU}{\mathcal{U}}
\newcommand{\cV}{\mathcal{V}}

\newcommand{\cY}{\mathcal{Y}}


\newcommand{\sG}{\mathscr{G}}

\newcommand{\sI}{\mathscr{I}}
\newcommand{\sO}{\mathscr{O}}

\newcommand{\sS}{\mathscr{S}}
\newcommand{\sV}{\mathscr{V}}

\newcommand{\Ahat}{{\hat{A}}}
\newcommand{\gbar}{{\overline{g}}}
\newcommand{\phibar}{{\overline{\phi}}}
\newcommand{\nablatilde}{\widetilde{\nabla}}

\newcommand{\id}{\operatorname{id}}
\newcommand{\tr}{\operatorname{tr}}
\newcommand{\tf}{\operatorname{tf}}
\newcommand{\Hess}{\operatorname{Hess}}
\newcommand{\ind}{{\operatorname{ind}}} 

\newcommand{\Span}{\operatorname{Span}}

\newcommand{\Diff}{\operatorname{Diff}}
\newcommand{\LimSec}{\operatorname{LimSec}}

\newcommand{\C}{\mathbb{C}}
\newcommand{\N}{\mathbb{N}}
\newcommand{\R}{\mathbb{R}}

\newcommand{\sweight}[1]{\mu_+ + a_{#1}} 
\newcommand{\trans}{\mathrm{trans}}

\newcommand{\Adot}{\dot{\cA}}

\newcommand{\zT}{\prescript{0}{}{T}}

\newcommand{\zTX}{\prescript{0}{}{T}X}

\begin{document}

\maketitle
\date{\today}
We study conformally compact metrics satisfying the Lovelock equations, which generalize the Einstein equation. We show that these metrics admit polyhomogeneous expansions, thereby naturally realizing the Fefferman–Graham expansion, which is an important tool in conformal geometry and the AdS/CFT correspondence. In even dimensions, we identify a boundary obstruction to smoothness near the boundary that generalizes the ambient obstruction tensor in the Einstein setting. Under appropriate regularity and curvature conditions, we also construct a formal solution to the singular Yamabe-$(2q)$ problem and provide an index obstruction for the conformally compact Lovelock filling problem of spin manifolds.
\tableofcontents
\pagenumbering{arabic}

\section{Introduction} 
The objective of this article is to analyze the boundary asymptotics of conformally compact Lovelock metrics and to investigate two closely related problems: one being the \textit{singular Yamabe–(2q) problem}, and the other the \textit{conformally compact Lovelock (CCL) filling problem} for manifolds with a prescribed conformal infinity. Under suitable regularity and curvature assumptions at the boundary, we show that several foundational results known for conformally compact Einstein metrics extend to this broader class. In particular, we prove the following:
\begin{enumerate}
    \item[(A)] A conformally compact Lovelock metric admits a polyhomogeneous expansion near the boundary, which implies that the formal expansion derived by Albin \cite{Alb20} is indeed realized in the Lovelock setting. 
    \item[(B)] In odd dimensions, a conformal boundary obstruction to the smoothness of the expansion arises; its leading-order term coincides (up to a constant multiple) with the ambient obstruction tensors in the Einstein case.
    \item[(C)] The singular Yamabe-$(2q)$ problem admits formal polyhomogeneous solutions near the boundary.
    \item[(D)] The index of a Dirac-type operator on spin manifolds gives rise to topological obstructions to solving the CCL filling problem under a scalar curvature lower bound.
\end{enumerate}
These results generalize those of conformally compact Einstein metrics from \cite{BH14}, \cite{GH04}, \cite{Gra17} and \cite{GHS21}.

We begin by considering a smooth manifold $X^{n+1}$ with boundary, equipped with a \bfemph{conformally compact metric} $g$ on its interior. This means there exists a defining function $x$, which is a nonnegative function that vanishes simply and exactly on the boundary $M = \pd X$, such that $\gbar = x^2g$ extends to a smooth metric on all of $X$. The \bfemph{conformal boundary} of~$X$ is the pair $(M, [\gbar|_{TM}])$, and the induced conformal class $[h] = [\gbar|_{TM}]$ on $M$ is referred to as the \bfemph{conformal infinity} of $g$. A paradigmatic example is the hyperbolic space $H^{n+1}$, whose conformal boundary is the standard round sphere $S^n$.

We are particularly interested in conformally compact metrics that satisfy a natural generalization of the Einstein equation, known as the \bfemph{Lovelock equation}. This equation arises as the Euler–Lagrange equation associated with the Lovelock action, which involves higher-order curvature invariants, and it takes the form
\begin{equation} \label{intro eqn: F_g}
    F_{\alpha} (g) = \sum_q \alpha_q \Big(\Ric_g^\qq - \lambda^\qq g \Big) -\frac{\alpha_q}{2q} \Big( \scal^\qq_g - (n+1) \lambda^\qq \Big)g = 0,
\end{equation}
for some $q$-tuple $\alpha = (\alpha_1, \alpha_2, \cdots, \alpha_q)$, where 
\begin{equation*}
    \Big(\Ricqq_g\Big)_i^j = \delta^{i i_1 \cdots i_{2q}}_{j j_1 \cdots i_{2q}} R_{i_1 i_2}^{j_1 j_2} \cdots R_{i_{2q-1} i_{2q}}^{j_{2q-1} j_{2q}}, \quad \scalqq_g = \tr(\Ricqq_g ), \quad \delta_{j_1 j_2 \cdots j_{2q}}^{i_1 i_2 \cdots i_{2q}} = \det\Big((\delta^{i_s}_{j_t})\Big),
\end{equation*}
and $\lambda^\qq = \left( -\frac{1}{2}\right)^q \frac{n! (2q)!}{(n-2q+1)!}$ are the normalizing constants chosen so that the Lovelock equation is satisfied by the hyperbolic metric. A conformal compact metric $g$ satisfying the Lovelock equation is referred to as a \bfemph{conformally compact Lovelock metric}. 

Albin \cite{Alb20} proved that such metrics exist on the ball $B^{n+1}$ when the conformal infinity~$[h]$ is sufficiently close to the round metric on $S^n$. This result illustrates an idea in geometric analysis: one starts with a conformal class $[h]$ on the boundary and seeks to construct a bulk metric $g$ on $X$ which realizes $[h]$ as the prescribed conformal infinity.

A related perspective arises in the AdS/CFT correspondence of theoretical physics \cite{Mal98, Wit98}, where anti-de Sitter (AdS) spaces are modeled as conformally compact manifolds, and their conformal boundaries encode the data of conformal field theories (CFTs). Mathematically, this correspondence is captured by the Fefferman–Graham ambient metric construction \cite{FG85, FG12}, which associates to a conformal manifold $(M, [h])$ an ambient space~$(X^{n+1}, g)$ compatible with the prescribed conformal infinity. This construction provides a framework for analyzing conformal invariants of $[h]$ through the more accessible Riemannian invariants of $(X, g)$.

In recent work, Albin \cite{Alb20} demonstrated that conformally compact Lovelock metrics admit a formal \bfemph{Fefferman–Graham expansion}, thereby extending the classical expansion theory developed for Einstein metrics to the Lovelock setting. Near the boundary, the metric $g$ takes the form 
\[x^2 g = dx^2 + h_x,\]
where $h_x$ admits the asymptotic expansion 
\begin{align} \label{intro eqn: Fefferman-Graham expansion}
    h_x = \begin{cases}
        h_0 + h_2 x^2 + \text{(even powers)} + h_{n-1} x^{n-1} + h_n x^n + \cdots & n \text{ odd}\\
        h_0 + h_2 x^2 + \text{(even powers)} + h_{n,1} \log (x)x^{n-1} + h_n x^n + \cdots  & n \text{ even}.
    \end{cases}
\end{align}
Our first aim is to show that conformally compact Lovelock metrics exhibit polyhomogeneous behavior near the boundary. This ensures that the formal Fefferman–Graham expansion constructed by Albin \cite{Alb20} is not merely formal but is genuinely realized in the Lovelock setting. In the conformally compact Einstein case, polyhomogeneity is a well-established property, proven independently by Chruściel, Delay, Lee, and Skinner \cite{CDLS05}, and by Biquard and Herzlich \cite{BH14} through gauge fixing and the application of elliptic theory for Laplace-type operators. We adapt the latter framework to the Lovelock setting, thereby extending the regularity theory from the Einstein case to a broader class of higher-curvature gravity models.
\begin{thm}[Polyhomogeneity of Asymptotically Hyperbolic Lovelock Metrics]
    Let $X$ be a manifold with boundary equipped with an asymptotically hyperbolic Lovelock metric $g$, which induces a smooth conformal infinity $[h]$. Then $g$ is polyhomogeneous in a neighborhood of the boundary.
\end{thm}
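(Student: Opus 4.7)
The plan is to adapt the gauge-fixing framework of Biquard--Herzlich \cite{BH14}, developed originally in the conformally compact Einstein setting, by replacing the Einstein operator throughout by the Lovelock operator $F_\alpha$ of \eqref{intro eqn: F_g}. The first step is to break diffeomorphism invariance so as to obtain an elliptic system. I would fix a smooth reference metric $g_0$ on $X$ realizing the conformal infinity $[h]$, let $B_{g_0}$ denote the associated Bianchi operator, and define the gauged Lovelock operator $\Phi_\alpha(g) := F_\alpha(g) + \mathcal{L}_{V(g,g_0)} g$, where $V(g,g_0)$ is the vector field dual to $B_{g_0}(g)$. The contracted Bianchi identity for the Lovelock tensor --- the analogue of $\nabla \cdot G = 0$ for the Einstein tensor, and a standard structural feature of Lovelock gravity --- then ensures that solving $\Phi_\alpha(g) = 0$ together with the gauge condition $B_{g_0}(g) = 0$ is equivalent, after an appropriate boundary-fixing diffeomorphism, to solving $F_\alpha(g) = 0$; the existence of this gauging diffeomorphism near the boundary follows from an implicit function argument as in \cite{BH14}.

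Next I would carry out the linearization and indicial analysis. A decisive feature of the Lovelock action is that, despite the higher-curvature integrand, $F_\alpha(g) = 0$ is \emph{second order} in $g$, so $D\Phi_\alpha$ is a second-order operator. At the hyperbolic background its principal symbol should reduce, up to an $\alpha$- and $n$-dependent nonzero constant, to that of a Lichnerowicz-type Laplacian on symmetric $2$-tensors, making $\Phi_\alpha$ uniformly degenerate elliptic in the $0$-calculus sense. Its indicial operator then matches, up to an overall rescaling, the one appearing in the Einstein case, so the indicial roots are exactly those producing the Fefferman--Graham expansion \eqref{intro eqn: Fefferman-Graham expansion}, with the logarithmic term in even dimensions arising from the indicial resonance at $x^n$.

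With ellipticity and indicial data in hand, Albin's construction \cite{Alb20} furnishes a formal polyhomogeneous solution $\widetilde g$. Writing $g = \widetilde g + e$, the gauged equation becomes a quasilinear second-order elliptic system for $e$ whose right-hand side vanishes to infinite order at $\partial X$. Polyhomogeneity of $e$, and hence of $g$, then follows from the standard parametrix machinery for uniformly degenerate elliptic operators: any weighted-$C^0$ solution with infinite-order-vanishing forcing and smooth boundary data is polyhomogeneous, with exponents in the index set generated by the indicial roots. This final step is structurally identical to the corresponding argument in \cite{BH14}.

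The main obstacle will be the symbol computation underlying the indicial analysis: verifying that the contributions of $DF_\alpha$ across the different orders $q$ combine, modulo the gauge-breaking correction, into a nondegenerate Laplace-type operator at the hyperbolic background. This requires a careful calculation with the generalized Kronecker symbols $\delta^{i_1\cdots i_{2q}}_{j_1\cdots j_{2q}}$, and the specific normalizing constants $\lambda^\qq$ must be exploited to ensure that no cancellation destroys ellipticity; in the degenerate case where the leading Einstein-like piece vanishes for the given $\alpha$, one would have to pass to the next nonvanishing order in $q$ and rerun the indicial analysis there.
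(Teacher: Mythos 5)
Your overall strategy (DeTurck-type gauge plus the Biquard--Herzlich $0$-calculus machinery, with Albin's formal expansion as input) is the same as the paper's, but two steps would fail as written. First, the gauge correction you propose is the Einstein one and does not produce an elliptic system here. The linearization of $F_\alpha$ at an asymptotically hyperbolic background contains gauge-breaking second-order terms of the form $(c_1\,\delta_{g_0}^* + c_2\, g\,\delta_{g_0})B_{g_0}(r)$ with $\alpha$- and $n$-dependent constants; the correction $\mathcal{L}_{V(g,g_0)}g$, being of pure $\delta_g^*$ type, can at best cancel the $\delta^*B$ piece (and only with the correct constant), leaving the $g\,\delta_{g_0} B_{g_0}(r)$ term in place, so the linearization is not the Laplace-type operator your indicial analysis presupposes. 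The paper instead subtracts $\Phi_\alpha(g,t) = (c_1\,\delta_g^* + c_2\, g\,\delta_g)\bigl(g\,t^{-1}B_g(t)\bigr)$, chosen exactly so that $D_1Q_\alpha$ becomes $(\Delta+2n)$ on the trace part and $(\Delta-2)$ on the trace-free part up to lower order; this is not a detail one can defer to "the symbol computation," since all subsequent indicial-root bookkeeping depends on which operator one gauges to.

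Second, and more seriously, your endgame rests on a reduction that is unavailable. You cannot produce a formal solution $\widetilde g$ with $F_\alpha(\widetilde g)$ vanishing to infinite order from the boundary data alone: the expansion is locally determined only up to order $n-1$, the coefficient $h_n$ is global data of the particular solution (and in even dimensions the obstruction forces a log term while leaving $h_n$ free), which is why the paper's first approximation only achieves $F_\alpha(\overline{\phi}) = O(x^\nu)$ for some $\nu > n-2$. Even granting such a $\widetilde g$, the equation for $e = g - \widetilde g$ is nonlinear and $e$ is a priori only in $C^{2,\gamma}_\delta$, so the linear principle "weighted solution with infinite-order forcing is polyhomogeneous" does not apply. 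The actual argument in \cite{BH14}, which the paper adapts, is an induction on a sequence of weights: at each step one inverts the indicial operator by the explicit Green's operators $G_0$ and $G_\infty$, extracts from the solution itself the new expansion coefficients (the terms $A_i\,x^{\mu_+^{(i)}}$, and at resonances a term $A_0\,x^{\mu_+ + a_{k+1}}\log x$), and separately controls transverse derivatives through an operator algebra so that these coefficients are smooth along the boundary. Your sketch compresses this iterative extraction into a one-shot parametrix statement that does not exist for the nonlinear problem; that is the gap you would need to fill.
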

When $n$ is even, the Fefferman–Graham expansion of $h_x$ may fail to be smooth due to the appearance of a logarithmic term. This obstruction was studied in the Einstein setting by Graham and Hirachi \cite{GH04}, who described its leading-order structure in terms of the ambient obstruction tensor. We generalize their result to the Lovelock case and demonstrate that the leading-order term of the obstruction tensor remains unchanged up to a dimension-dependent constant factor.

\begin{thm}[Leading-Order Term of the Lovelock Obstruction Tensor] \label{intro thm: obstruction leading order}
    Let $M$ be a smooth $n$-manifold with conformal infinity $[h]$, where $n \geq 4$, and let $X$ be an $(n+1)$-manifold with boundary $M$. Let~$x$ denote a boundary defining function for $M$.

    There exists a metric $g$ satisfying $x^2 g|_{TM} \in [h]$ and $F_\alpha(g) = O(x^{n-2})$. 
    The leading-order term of the obstruction tensor $\sO = x^{2-n} \tf\big(F_\alpha(g)\big)\big|_{x=0}$ is given by
    \begin{equation} \label{intro eqn: h.o.t. of O_ij}
        \frac{A_1(\alpha)}{c_n} \Delta^{\frac{n}{2} - 2} \left( \tensor{P}{_{ij,k}^k} - \tensor{P}{_k^k_{,ij}} \right),
    \end{equation}
    where $P$ is the Schouten tensor of $h$, and the coefficients are given by
    \begin{align*}
        A_1(\alpha) = \sum_q \alpha_q \left( -\frac{1}{2} \right)^{q-1} \frac{(n-2)!}{2} \frac{(2q-1)!}{(n-2q)!}, \quad \text{and} \quad c_n = \frac{2^{n-2}(n/2-1)!^2}{n-2}.
    \end{align*} 
\end{thm}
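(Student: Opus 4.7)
The plan is to reduce the problem to the Einstein case by identifying how the Lovelock operator linearizes around the hyperbolic background. I first place $g$ in Fefferman--Graham normal form: fix a representative $h \in [h]$ and a defining function $x$ with $x^2 g = dx^2 + h_x$ and $h_x\bdy = h$. In these coordinates the equation $F_\alpha(g)=0$ becomes an ODE system in $x$ for the family $h_x$, and by \cite{Alb20} one can solve order by order,
\[
    h_x = h + h_{(2)} x^2 + h_{(4)} x^4 + \cdots + h_{(n-2)} x^{n-2} + O(x^{n-1}),
\]
with each $h_{(2k)}$ locally determined by $h$. Pushing the expansion one step further yields a metric with $F_\alpha(g) = O(x^{n-2})$, and by construction the trace-free coefficient of $x^{n-2}$ in $F_\alpha(g)$ is the obstruction tensor $\sO$.

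To extract the leading derivative structure of $\sO$, I would linearize each summand $\Ricqq - \lambda^\qq g$ of $F_\alpha$ at the hyperbolic model $g_\hyp$, whose Riemann tensor is covariantly constant, $R_{ij}^{kl} = -(\delta^k_i \delta^l_j - \delta^l_i \delta^k_j)$. Since $\Ricqq$ is a polynomial of degree $q$ in $\Rm$ assembled via the generalized Kronecker delta $\delta^{i i_1 \cdots i_{2q}}_{j j_1 \cdots j_{2q}}$, its linearization at $g_\hyp$ splits into $q$ terms, each containing one ``variable'' Riemann factor contracted against $q-1$ frozen hyperbolic copies. Contracting the $(q-1)$ hyperbolic copies into the generalized delta is a purely combinatorial operation whose value, by the standard antisymmetrization identities, is $\left(-\frac{1}{2}\right)^{q-1} \frac{(n-2)!}{2} \frac{(2q-1)!}{(n-2q)!}$ times the Einstein linearization. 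Summing over $q$ with weights $\alpha_q$ yields $A_1(\alpha)$, so the linearization of $F_\alpha$ at $g_\hyp$ is exactly $A_1(\alpha)$ times the linearization of the Einstein tensor.

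With this identification in hand, the construction parallels \cite{GH04}. The recursive solvability of $h_{(2k)}$ is governed at each order by the indicial operator of $F_\alpha$, and the scalar relation above shows that this indicial operator degenerates in the same trace-free direction, and at the same order $x^{n-2}$, as in the Einstein case. Graham and Hirachi compute the leading-derivative part of the Einstein residue to be $\frac{1}{c_n}\Delta^{n/2-2}\left(\tensor{P}{_{ij,k}^k} - \tensor{P}{_k^k_{,ij}}\right)$; since the Lovelock and Einstein indicial operators at $g_\hyp$ differ only by the scalar $A_1(\alpha)$, the leading symbol of $\sO$ is this Einstein expression multiplied by $A_1(\alpha)$, producing \eqref{intro eqn: h.o.t. of O_ij}.

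The hard part will be verifying that the nonlinear contributions from the $q\geq 2$ summands do not corrupt the leading-derivative part of $\sO$. Each $\Ricqq$ is polynomial in $\Rm$, so in principle products of several ``variational'' Riemann factors could contribute at order $x^{n-2}$. The key structural fact is that on the formal expansion, $\Rm_g$ differs from its hyperbolic background value by an $O(x^2)$ correction built from $h$; hence in any product of $q$ Riemann tensors only single-variation terms survive at the relevant order and with the maximal derivative count, while products of two or more variations either raise the power of $x$ past $n-2$ or reduce the differential order below $n-2$. A careful bookkeeping of this expansion, using the polyhomogeneous structure established earlier in the paper, is what both isolates the Einstein-type leading term and reproduces the precise coefficient $A_1(\alpha)$.
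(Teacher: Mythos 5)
Your proposal is correct and follows essentially the same route as the paper's proof: both work in Fefferman--Graham normal form, observe that the linear, highest-derivative part of the Lovelock equations at the hyperbolic model is the scalar multiple $A_1(\alpha)$ of the Einstein system (so the interior coefficients' leading parts coincide with Graham--Hirachi's and only the residue at the degenerate order $s=n-1$ picks up a single factor of $A_1(\alpha)$), and then import the Einstein formula for $\tf\big[\pd_x^{2\ell}h\big]\big|_{x=0}$ from \cite{GH04} to obtain $\frac{A_1(\alpha)}{c_n}\Delta^{n/2-2}\big(\tensor{P}{_{ij,k}^k}-\tensor{P}{_k^k_{,ij}}\big)$. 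The paper simply carries out the combinatorial step you leave as an assertion, by differentiating Albin's explicit formula for $\Ric^{(2q)}_g$ in normal form and evaluating contractions such as $\ctr[h]^{2q-1}(h^{2q-2}\Rm_h)$ via the contraction formula, which is also where the separation of the single-variation terms from the quadratic-in-curvature remainder is made precise.
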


The singular Yamabe problem, introduced in \cite{LN74} and further developed in \cite{Gra17}, seeks a defining function $u$ for the boundary $M = \partial X$ of a smooth compact Riemannian manifold $(X^{n+1}, \bar{g})$ such that the conformally rescaled metric $g = u^{-2} \bar{g}$ has constant scalar curvature. This framework extends naturally to conformally compact Lovelock metrics, wherein the scalar curvature is replaced by scalar-$(2q)$ curvatures. As in the Einstein case, we study the boundary asymptotics of defining functions $u$  that yield metrics with constant scalar-$(2q)$ curvatures to be constant. These solutions admit formal polyhomogeneous expansions near the boundary, analogous to those arising in the classical singular Yamabe problem.

For a $q$-tuple $\beta = (\beta_1, \ldots, \beta_q)$ with $2q \leq n+1$ and $\kappa > 0$, we set
\[\widetilde{B}_{1,2}(\beta, \kappa) = \sum_q \beta_q \left( -\frac{\kappa}{2} \right)^{q-1} \frac{(n-1)!}{2} \frac{(2q)! \;q}{(n-2q+1)!}.\]
\begin{thm}[Formal Polyhomogeneous Solution to the Singular Yamabe-$(2q)$ Problem]
    For a prescribed $\beta = (\beta_1, \cdots, \beta_q)$, with $2q\leq n$, satisfying $\widetilde{B}_{1,2}(\beta, \kappa) \neq 0$, there exists a function $u$ on $X$ that serves as a boundary defining function of $M = \pd X$ such that the scalar-$\qq$ curvatures of the conformally rescaled metric $g = u^{-2}\gbar$ satisfy
    \begin{equation}
        \sum \beta_q \Big(\scal_g^\qq - (n+1)\lambda^\qq\Big) = O(x^{n+2} \log x),
    \end{equation}
    where $\lambda^\qq = \big(-\frac{1}{2}\big)^q \frac{n!(2q)!}{(n-2q+1)!}$. 
    Moreover, the function $u$ has the form
    \[u = x + u_2 x^2 + \cdots + u_{n+1} \,x^{n+1} + \cL^\qq x^{n+2} \log x, \]
    where $\cL^\qq$ is the singular Yamabe-$(2q)$ obstruction.
\end{thm}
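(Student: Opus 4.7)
The plan is to construct the boundary expansion of $u$ iteratively, with coefficients determined order by order by solving a sequence of linear equations whose leading factor is essentially $\widetilde{B}_{1,2}(\beta, \kappa)$. The approach follows the singular Yamabe formal-solution strategy of Graham \cite{Gra17}, adapted to the higher-curvature Lovelock setting. Explicitly, I fix a smooth reference defining function $x$ for $M$ arising from $\gbar$ and seek $u$ as a formal series
\[
u = x + u_2 x^2 + u_3 x^3 + \cdots + u_{n+1} x^{n+1} + \cL^\qq x^{n+2} \log x + \cdots,
\]
with coefficients $u_k \in C^\infty(M)$. Substituting $g = u^{-2}\gbar$, I expand $\sum_q \beta_q\bigl(\scal_g^\qq - (n+1)\lambda^\qq\bigr)$ in powers of $x$; at order $x^{m-2}$ the resulting coefficient takes the schematic form
\[
I(m)\, u_m + R_m(u_2, \dots, u_{m-1}),
\]
where $R_m$ is polynomial in the previously determined coefficients together with their tangential derivatives on $M$, and $I(m)$ is a universal indicial factor independent of $u_m$.

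The crucial computation is to identify $I(m)$ explicitly. For the scalar curvature ($q=1$) this is the standard conformal transformation calculation involving $\Delta_{\gbar} u$. For general $q$, the scalar-$\qq$ curvature is a contraction of $q$ copies of the Riemann tensor against a generalized Kronecker delta; under the conformal rescaling $g = u^{-2}\gbar$, the leading-in-$u_m$ contribution arises from one factor carrying the second derivative of $u$, while the remaining $q-1$ factors contribute the background hyperbolic curvature $-\kappa$. Tracking the combinatorics of these contractions, $I(m)$ should factor as a constant multiple of $\widetilde{B}_{1,2}(\beta, \kappa)$ times a polynomial in $m$ whose first positive root is $m = n+2$. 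Under the hypothesis $\widetilde{B}_{1,2}(\beta,\kappa) \neq 0$, the coefficients $u_2, \dots, u_{n+1}$ are therefore uniquely and recursively determined from $I(m)\, u_m = -R_m(u_2, \dots, u_{m-1})$.

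At the critical order $m = n+2$ the indicial factor vanishes and ordinary polynomial solvability fails. Inserting the ansatz term $\cL^\qq x^{n+2} \log x$ into the expansion produces an additional contribution at order $x^n$ whose coefficient is $I'(n+2) \cdot \cL^\qq$; matching it against the obstruction $R_{n+2}(u_2, \dots, u_{n+1})$ determines $\cL^\qq$ uniquely as the singular Yamabe-$\qq$ obstruction, after which the rescaled Lovelock scalar equation holds to the asserted order $O(x^{n+2} \log x)$.

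The main technical obstacle is isolating the $u_m$-linear piece of $\scal_g^\qq$ at each order from within the $q$-fold curvature polynomial. I would handle this by first expanding the Riemann tensor of $g$ in terms of that of $\gbar$ via the standard conformal-change formula, then substituting into the Lovelock scalar and carefully separating the contribution linear in the top derivative of $u_m$ from the remainder absorbed into $R_m$. Once the explicit form of $I(m)$ is pinned down and the factor $\widetilde{B}_{1,2}(\beta,\kappa)$ is extracted from the combinatorics of the generalized Kronecker delta, the remainder of the construction is a routine induction mirroring the Fefferman–Graham type arguments of Graham–Hirachi \cite{GH04}.
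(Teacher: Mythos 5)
Your strategy coincides with the paper's: substitute $g = u^{-2}\gbar$, expand the Lovelock scalar order by order in $x$, identify an indicial factor proportional to $\widetilde{B}_{1,2}(\beta,\kappa)$ that vanishes exactly at the critical order, and repair the failure there with an $x^{n+2}\log x$ term whose coefficient becomes the obstruction $\cL^\qq$. Two points, however, need attention. First, the substantive content of the proof is exactly the computation you defer: showing that the coefficient of the top derivative $\pd_x^{s+1}u\big|_{x=0}$ in $\pd_x^s\widetilde{F}_\beta(g)\big|_{x=0}$ is $n\big(s-(n+1)\big)\widetilde{B}_{1,2}(\beta,\kappa)$ up to terms involving only $\pd_x^k u$ with $k\leq s$. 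The paper obtains this by writing $\Rm_{u^{-2}\gbar} = u^{-4}\big(u^2\Rm_{\gbar} + \gbar(u\Hess u - \tfrac{\kappa}{2}\gbar)\big)$ as a double form, so that $\scalqq_{u^{-2}\gbar} = \ctr[\gbar]^{2q}\big[(u^2\Rm_{\gbar} + \gbar(u\Hess u - \tfrac{\kappa}{2}\gbar))^q\big]$, and then isolating the $\pd_x^{s+1}u$-linear part via the contraction formula and the identity for $\ctr^{\ell+1}(\tau\cdot\eta)$ in Proposition~\ref{prop: full contraction l form times 1 form}. Your heuristic that one factor carries the Hessian of $u$ while the other $q-1$ factors contribute the background constant-curvature term is correct, but the combinatorics that produce precisely $\widetilde{B}_{1,2}(\beta,\kappa)$ and the linear factor $s-(n+1)$ are where the work lies; writing ``$I(m)$ should factor as\dots'' leaves the crux unproved.

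Second, your order bookkeeping is off by one. A term $u_m x^m$ first affects $\widetilde{F}_\beta(g)$ at order $x^{m-1}$, not $x^{m-2}$: already for $q=1$ the relevant contribution from $2nu\Delta_{\gbar}u - n(n+1)|du|_{\gbar}^2$ is $2nm\big(m-(n+2)\big)u_m x^{m-1}$. Consequently the obstruction and the matching of the logarithmic coefficient occur at order $x^{n+1}$, not $x^{n}$. With your counting, fixing $u_2,\dots,u_{n+1}$ and $\cL^\qq$ would only yield an error of order $x^{n+1}\log x$, one power short of the assertion; with the corrected counting (the one the paper's recurrence exhibits, where the factor $s-(n+1)$ vanishes when solving for $\pd_x^{n+2}u$), the construction does give $\widetilde{F}_\beta(g) = O(x^{n+2}\log x)$ as claimed.
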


Finally, we address obstructions to the \bfemph{conformally compact Lovelock (CCL) filling problem}, which generalizes the study of conformally compact Einstein (CCE) filling problem explored by \cite{GL91, GHS21, CG22}. The CCL filling problem asks whether, given a conformal class $[h]$ of metrics on the boundary $M  = \pd X$, there exists a conformally compact Lovelock metric $g$ on the interior of $X^{n+1}$ whose conformal infinity matches the given class. While examples exist where such fillings are possible (e.g., filling the ball with prescribed conformal infinity near that of the round sphere metric), we show that certain conformal classes cannot be realized as the conformal infinity of a conformally compact Lovelock metric. When the dimension satisfies $n+1 = 4k$, with $k \geq 2$, we demonstrate that the obstruction given in \cite{GHS21}, based on the index of the Dirac operator, also applies to the CCL-filling problem, when a lower bound on the scalar curvature $\scal_g$ is assumed.

\begin{thm}[Obstruction to CCL Fillings] \label{intro thm: CCL filling obstruction}
    Let $X$ be a smooth, compact Riemannian spin manifold with boundary, satisfying $\dim X = n+1 = 4k$ with $k \geq 2$. Suppose that $h$ is a Riemannian metric on the boundary $M = \pd X$. If the following conditions hold:
    \begin{enumerate} 
        \item the boundary metric $h$ admits a positive Yamabe invariant, 
        \item the Dirac operator associated with $g$ has non vanishing index $\cI$,
    \end{enumerate} 
    then the pair $(X, h)$ does not admit any conformally compact Lovelock filling $(X, g)$ that satisfies $\scal_g \geq -n(n+1)$.
\end{thm}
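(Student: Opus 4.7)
The plan is to argue by contradiction: assume a CCL filling $(X,g)$ exists with $\scal_g \geq -n(n+1)$, and deduce that the index $\cI$ must vanish, contradicting hypothesis (ii). The overall strategy follows the spinor method of \cite{GHS21} developed for conformally compact Einstein metrics, with our Theorem 1.1 (polyhomogeneity of $g$) supplying the regularity input that is classical in the Einstein case.

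First, since $h$ has positive Yamabe invariant, fix a representative $h_0 \in [h]$ with $\scal_{h_0} > 0$, and put $g$ in geodesic boundary form $x^2 g = dx^2 + h_x$ with $h_x|_{x=0} = h_0$. The polyhomogeneous expansion of $h_x$ from Theorem 1.1 is essential both for the weighted Fredholm theory of the Dirac operator below and for the subsequent boundary term analysis. On $(X,g)$ introduce the shifted Dirac operator
\[
\cD \;=\; D_g \pm \tfrac{n+1}{2}\,\id,
\]
with the sign chosen so that on the hyperbolic model $H^{n+1}$ its kernel consists of imaginary Killing spinors. Using elliptic theory for asymptotically hyperbolic operators in weighted Sobolev spaces, one solves $\cD\psi = 0$ on $X$ with prescribed boundary asymptotic data $\psi \sim x^{-n/2}\psi_0$, where $\psi_0$ is a spinor on $M$ whose existence is guaranteed by the non-vanishing of $\cI$.

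The Lichnerowicz–Weitzenb\"ock identity for $\cD$ takes the schematic form
\[
\cD^*\cD \;=\; \nablatilde^*\nablatilde \;+\; \tfrac{1}{4}\bigl(\scal_g + n(n+1)\bigr) \;+\; \text{lower-order},
\]
for a suitably modified connection $\nablatilde$. Pairing this with $\psi$ and integrating over $X$, the bulk integrand is pointwise non-negative by the hypothesis $\scal_g + n(n+1) \geq 0$, while the boundary contribution is extracted via the polyhomogeneous expansion and evaluates, after cancellations governed by the Lovelock structure of $g$, to a positive multiple of $\int_M \scal_{h_0}\,|\psi_0|^2$. Since this latter integral is strictly positive by the choice of $h_0$, one forces $\nablatilde\psi = 0$ on $X$ and $\psi_0 = 0$ on $M$, contradicting the non-vanishing of $\cI$.

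The main obstacle lies in the boundary term analysis: one must track the precise asymptotics of $\psi$, $\nablatilde\psi$, and the curvature corrections against the polyhomogeneous terms in $h_x$, and verify that no subtle cancellation renders the boundary integral indefinite. Here Theorem 1.1 is indispensable. A secondary obstacle is verifying that the indicial roots of $\cD$ on a Lovelock asymptotically hyperbolic background coincide to leading order with those on an Einstein background, so that the weighted Fredholm framework of \cite{GHS21} transfers; this amounts to showing that the Lovelock equation perturbs the relevant indicial equation only in subleading terms, which is plausible because the Einstein piece of $F_\alpha(g)$ dominates at lowest order in $x$.
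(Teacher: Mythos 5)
Your proposal replaces the paper's argument with a direct Witten-type spinor argument on the noncompact asymptotically hyperbolic manifold, and as written it has a genuine gap at its very first step: the invariant $\cI$ is defined as the APS index of the Dirac operator of a \emph{totally geodesic compact extension} of $h$, and nothing in the proposal explains why $\cI(X,h)\neq 0$ should produce a nonzero boundary spinor $\psi_0$ on $M$ for which the Killing--Dirac equation $\cD\psi=0$, $\psi\sim x^{-n/2}\psi_0$, is solvable on the asymptotically hyperbolic filling $(X,g)$. That bridge (identifying the compact APS index with an index or solvability statement for the shifted Dirac operator in weighted spaces on the AH manifold) is a nontrivial theorem in its own right and is not supplied; without it the contradiction ``$\psi_0\neq 0$ forced, $\psi_0=0$ derived'' does not get off the ground. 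The second gap is the boundary-term computation, which you yourself flag as the main obstacle: the claim that it ``evaluates, after cancellations governed by the Lovelock structure of $g$, to a positive multiple of $\int_M \scal_{h_0}|\psi_0|^2$'' is precisely what would have to be proved, and the Lovelock equation plays no such pointwise role in the Weitzenb\"ock identity --- in the paper it enters only through the Fefferman--Graham expansion. So the proposal is a sketch of a plausible alternative strategy, not a proof.

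For comparison, the paper's proof avoids all spinor analysis on the noncompact manifold. Assuming a CCL filling with $\scal_g\geq -n(n+1)$, it invokes Lemma~\ref{lem: efunction of Laplacian} (the Case--Qing--Yang eigenfunction $u$ solving $(\Delta-(n+1))u=0$, whose expansion needs only the Fefferman--Graham expansion, hence is valid for Lovelock metrics) and Lemma~\ref{lem: CCL bound for scal} to conformally compactify: $\widetilde g=u^{-2}g$ has totally geodesic boundary and $\scal_{\widetilde g}\geq \frac{n+1}{n-1}\scal_h>0$, where $h$ is chosen to be a Yamabe representative with positive scalar curvature; this is exactly where the hypotheses $\scal_g\geq -n(n+1)$ and $Y(M,h)>0$ are used. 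Since $n=4k-1$ is odd the expansion is smooth near $M$, and the Vanishing Lemma of \cite{GHS21} (a compact-manifold Lichnerowicz argument, purely topological in the filling metric) then gives $\cI(X,h)=0$, contradicting hypothesis (ii). If you want to pursue your route, the missing ingredients are (a) an identification of $\cI$ with an index for the asymptotically hyperbolic Killing--Dirac operator, and (b) a complete asymptotic boundary-term analysis; both are substantial, and the paper's reduction to the conformal compactification of \cite{CQY04} plus the Vanishing Lemma sidesteps them entirely.
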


The structure of this paper is as follows: In \S\ref{sec: conformal geometry} and \S\ref{sec: Lovelock tensor}, we present the foundational concepts of asymptotically hyperbolic Lovelock metrics. Specifically, \S\ref{sec: conformal geometry} introduces 0-differential operators, which are beneficial in conformal geometry, and defines the Hölder and polyhomogeneous spaces that we will be working with. In \S\ref{sec: Lovelock tensor}, we define the Lovelock tensor using double forms and the contraction map. We also discuss DeTurck's trick, which enables the application of elliptic theory to the Lovelock equations.

In \S\ref{sec: invertibility of Laplace operators}, we focus on the invertibility of Laplace-type operators that arise in the modified Lovelock tensor after applying DeTurck's trick. The proof of polyhomogeneity is given in \S\ref{sec: phg}, using the techniques developed in \cite{BH14}. In cases where logarithmic terms appear in the Fefferman-Graham expansion, we consider the obstruction tensor. The computation of its leading-order term is presented in \S\ref{sec: ambient obstruction}, following the methods outlined in \cite{GH04}.

Moving forward, we address the singular Yamabe-$(2q)$ problem and the CCL filling problem. A formal expansion of the solution to the singular Yamabe-$(2q)$ problem is provided in \S\ref{sec: singular Yamabe}, while \S\ref{sec: CCL filling} discusses the index obstruction to a CCL filling for a spin manifold.
\\
\\
\textbf{Acknowledgment}. I am deeply grateful to my advisor, Pierre Albin, for introducing me to this problem and for guiding me throughout this work. I also thank Gabriele La Nave for many enlightening discussions on modified gravity and gravitational instantons, which have greatly enriched my understanding. I have also benefited from inspiring discussions with Hadrian Quan, Gayana Jayasinghe, and Karthik Vasu, as well as from cheerful conversations with Alex Taylor and Huy Tran. 

\section{Conformal geometry} \label{sec: conformal geometry}
We will consider a compact $(n+1)$-dimensional manifold $X$ with boundary $\pd X = M$, where $n \geq 4$. We assume the interior $\mathring{X}$ is equipped with a metric $g$. A \bfemph{boundary defining function} $x$ is a smooth function that is positive on $\mathring{X}$, vanishes only on $M$ and has a nonvanishing differential on $M$. We identify a collar neighborhood of the form $\{ x < x_0\}$ of $M$ in $X$ with a collar of $M$, $[0,x_0] \times M$, and denote it $M_{\leq x_0}$. We call a boundary defining function $x$ \bfemph{special} if $\left|\frac{dx}{x}\right|^2_g \equiv \kappa$ in a neighborhood of $M$, for some constant $\kappa$.

A \bfemph{conformally compact metric} is a metric $g$ on $\mathring{X}$ such that $\bar{g} = x^2 g$ extends continuously to a metric on $X$. We denote the restriction $\bar{g}|_{TM} = h$. It is clear that the conformal class $[h]$ is independent of the choice of $x$. We call it the \bfemph{conformal infinity}. We say that such a metric $g$ is \bfemph{of class $\boldsymbol{C^{k,\gamma}}$} for a nonnegative integer $k$ and $0 < \gamma < 1$ if for any $x$ as above, $x^2g$ extends continuously to a metric of class $C^{k,\gamma}$. We will assume the compactified metric $\bar{g}$ is of class $C^2$ and the metric $h$ is smooth. 

\begin{rmk}
    As a convention, Greek letters (e.g., $\mu, \nu$) run from $0$ to $n$, labeling the coordinates on the manifold with boundary $X$; whereas Latin letters (e.g., $i, j$) run from $1$ to $n$, labeling the coordinates on the boundary $M$. We denote $\pd_0 = \pd_x$.
\end{rmk}

\subsection{0-differential operators}
The type of question we are interested in is the asymptotic behavior of conformally compact Lovelock metrics. We will discuss Lovelock metrics in \S\ref{sec: Lovelock tensor}. When studying conformally compact  metrics, it is convenient to use the framework of the 0-calculus. Our main focus will be introducing the $0$-differential operators. For a comprehensive discussion and an examination of elliptic theory in the 0-calculus setting, we refer the reader to \cite{Maz91}. 

Suppose $X$ is a manifold with boundary $\pd X = M$ which is identified by the boundary defining function $x$. The space of all smooth sections of $TX$, $\cV(X) = C^\infty(X;TX)$ is the Lie algebra of smooth vector fields on $X$. It has a Lie subalgebra 
\[\cV_0(X) = \{V \in \cV(X): V \text{ vanishing on } M\}.\]
There is a bundle $\zTX$ with a bundle map $\iota_0: \zTX \to TX$ which induces a map of sections 
\[\iota_0^*: C^\infty(X; \zTX) \to C^\infty(X; TX)\]
whose image is $\cV_0(X)$ \cite[\S2]{Maz91}. If $\cI_x \subset C^\infty(X)$ is the ideal of functions vanishing at~$x \in X$, then the fiber of $\zTX$ is
\[\zT_xX = \cV_0(X) \Big/ \cI_x\cdot\cV_0(X).\]
The map $\iota_0$ is an isomorphism in the interior $\mathring{X}$, since $C^\infty(\mathring{X}; \zTX)$ coincides with the space~$C^\infty(\mathring{X}; TX)$, but is neither injective nor surjective when restricting to the boundary of~$X$. We call $\zTX$ the \bfemph{0-tangent bundle}. Its dual bundle is the \bfemph{0-cotangent bundle}~ $\zT^*X$.
The differential operators generated by $\cV_0(X)$ are called \bfemph{0-differential operators} on $X$.

For our purposes, we consider an asymptotically hyperbolic metric of the form
\[g = \frac{dx^2 + h_x}{x^2},\]
where $h_x$ is a family of metrics on $M$ parametrized by $x$, and we denote its restriction by~$h_0 = h_x|_{TM}$. We can express $\cV_0$ and $\zT^*X$ in terms of local coordinates $(x,y^1, \cdots, y^n)$, where $x$ is the boundary defining function and $y^i$ restricts to coordinates on $M$. The norm of the vector fields $\pd_x, \pd_{y^i}$ blow up as $x \to 0$, whereas the length of $x\pd_x, x\pd_{y^i}$ is pointwise bounded above. The Lie subalgebra we discussed above is 
\[\cV_0 = \Span_{C^\infty(X)}\{x\pd_x, x \pd_{y^i}\},\]
and the 0-differential operators of order $k$ are of the form 
\[\sum_{j+|\alpha|\leq k} b_{j,\alpha}(x,y)(x\pd_x)^j (x\pd_y)^\alpha,\quad  b_{j,\alpha}(x,y) \in C^\infty(X).\]
We denote \bfemph{the space of 0-differential operators of order $\boldsymbol{k}$} by $\Diff_0^k(X)$.
The 0-cotangent bundle $\zT^*X$, is locally spanned by the set of zero 1-forms 
\[\Big\{ \frac{dx}{x}, \frac{dy^i}{x} \Big\}.\]
We emphasize that $\frac{dx}{x}$ and $\frac{dy^i}{x}$ are smooth as sections of $\zT^*X$.

\subsection{Polyhomogeneity}
We introduce the spaces of polyhomogeneous sections which generalize smooth sections by allowing logarithmic dependence and noninteger powers of $x$ in their asymptotic expansions. These spaces are crucial for the discussion in \S\ref{sec: phg}, where we define the Green's operator for Laplace type operators. 

Given a manifold $X$ with boundary and a boundary-defining function $x$, the \bfemph{single-weighted H\"older space} on $X$ is defined by
\begin{equation} \label{eqn: single-weighted Holder space}
    C^{k,\delta}_\xi(X) = x^\xi C^{k,\delta}(X).
\end{equation}

For the purpose of local analysis, we now focus on a semi-ball and define the double-weighted Hölder spaces following \cite[\S4]{BH14}. 
\begin{rmk}
    We will use $\phi, \phi_0$, etc., to denote metrics on the semi-ball, and we will use~$g, g_0$, etc., to denote metrics on the manifold with boundary $X$.
\end{rmk}

Let $B^{n+1}_+$ be the $(n+1)$-dimensional semi-ball $\{x_0^2 + x_1^2 + \cdots + x_n^2 \leq 1, x_0 \geq 0\}$. Its boundary at infinity has a boundary defining function $x = x_0$. Suppose $h$ is a smooth metric on $M = \{x = 0\}$, we consider the metric on $B^{n+1}_+$ given by
\[\phi_0 = \frac{dx^2+h}{x^2}.\]
When $h = dx_1^2 + \cdots + dx_n^2$, $\phi_0$ is the hyperbolic metric and we denote it by $\phi_{\hyp}$. 

The semi-ball has two boundaries. The boundary at infinity is given by $M = \{x_0=0\}$. We denote the boundary defining function of $M$ by $x = x_0$. The other one is the boundary in the interior given by $\pd_{\mathrm{int}} B_+^{n+1} = \{x_0^2 + x_1^2 + \cdots + x_n^2 = 1\}$. The intersection of these two boundaries is the sphere $S^{n-1} = \{x_0 = 0, x_1^2 + \cdots + x_n^2 = 1\}$. 

A double-weighted H\"older space is similar to the single-weighted H\"older space but it captures the behavior of functions near the two distinct boundaries. We set $\rho$ as the distance from a point in $M$ to the sphere $S^{n-1}$. Then the metric $h$ decomposes as 
\[h = d\rho^2 + h_\rho,\]
where $h_\rho$ is a family of metrics on $S^{n-1}$. We define $u$ and $v$ by
\[x = \frac{2u}{1+u^2} v, \quad \rho = \frac{1-u^2}{1+u^2} v,\]
then 
\[\phi_0 = \frac{du^2+\widetilde{h}}{u^2}, \quad \text{where} \quad \widetilde{h} = \frac{h}{4\rho^2} = \frac{(1+u^2)^2}{4v^2}(dv^2 + h_\rho).\]

\begin{center}
    \includegraphics[width=0.35\textwidth]{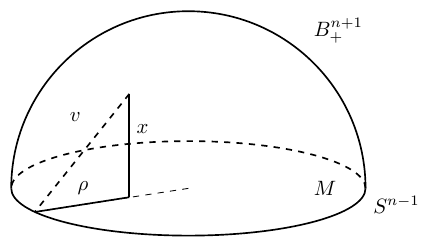}
\end{center}

We define the \bfemph{double-weighted H\"older space} to be
\[C^{k,\delta}_{\xi, \zeta}(B_+^{n+1}) = u^{\xi} v^{\zeta} C^{k,\delta}(B_+^{n+1}).\]
\begin{rmk}
    The coordinates $(u,v)$ are related to distances to the boundaries. Note that we have $v = \sqrt{x^2+\rho^2}$ representing the distance of a point $p \in B_+^{n+1}$ to $S^{n-1}$. In particular, for points on the boundary $M$, $x = u = 0$, and $v = \rho$. Also, $u = 1$ implies $x = v$ and $\rho = 0$, which represent points on the sphere $S^{n-1}$.
\end{rmk}

We now define polyhomogeneous functions on a semi-ball following \cite[\S6]{BH14}. 
\begin{defn}
    An \bfemph{index set} is a subset $E \subset \C \times \N_0$ satisfying 
    \begin{itemize}
        \item for each $s \in \R$, $E_{\leq s} = \{(z,k) \in E: \operatorname{Re}(z)\leq s\}$ is finite;
        \item if $(z,k) \in E, 0\leq j \leq k$ then $(z,j) \in E$.
    \end{itemize}
\end{defn}
Throughout this article, we will consider index sets $E$ that are subsets of $\R \times \N_0$. 

\begin{defn}
    Let $\xi \geq 0$ and $\zeta > 0$. We define the space $\Adot_{\xi,\zeta}(B_+^{n+1})$ of \bfemph{finite polyhomogeneous functions of weight at least $\boldsymbol{\xi}$} to be the collection of functions of the form
    \begin{align} \label{eqn: phg expansion 1}
        \sum_{(z,k)\in E} a_{z,k}(y) \, x^z (\log x)^k,
    \end{align}
    where $E$ is a finite subset of $[\xi,\infty) \times \N_0$ and $a_{z,k}(y) \in \rho^\zeta C^{k,\gamma}(M)$. 
\end{defn}

In addition, we define $\Adot[\xi]_\zeta(B_+^{n+1})$ to be the set of finite polyhomogeneous functions in~$\Adot_{\xi,\zeta}(B_+^{n+1})$ of the form 
\[\sum_{k} a_{z,k}(y) \, x^\xi \, (\log x)^k.\]
If $u \in \Adot_{\xi,\zeta}(B_+^{n+1})$ admits the expansion \eqref{eqn: phg expansion 1}, we denote by $[u]_\xi$ the sum of the terms in the expansion \eqref{eqn: phg expansion 1} with $z = \xi$. Then $[u]_\xi \in \Adot[\xi]_\zeta(B_+^{n+1})$.

\begin{defn}
    A \bfemph{polyhomogeneous function} $u$ on a semi-ball with respect to the weights $(\xi, \zeta)$ is a function for which there exist coefficients $a_{z,k}(y) \in \rho^\zeta C^\infty(M)$, where~$(z,k)$ belongs to an index set $E \subset [\xi, \infty) \times \N_0$, such that for all $\delta > 0$, there exists an integer $N$ for which the following holds:
    \[u(x,y) - \sum_{(z,k) \in E_{\leq N}} a_{z,k}(y) \, x^z (\log x)^k \; \in \; C^\infty_{\delta,\zeta}(B_+^{n+1}).\]
    We denote the set of polyhomogeneous functions as $\cA_{\xi, \zeta}(B^{n+1}_+)$. 
\end{defn}

\begin{rmk}
    The definitions of weighted H\"older functions extend naturally to sections of tensor bundles. We say that a tensor is $C^{k,\delta}_\xi$ (or $C^{k,\delta}_{\xi,\zeta}$, respectively) if, within trivializations that contain boundary points, each of its components belongs to $C^{k,\delta}_\xi$ (or $C^{k,\delta}_{\xi,\zeta}$, respectively). Similarly, the definitions of polyhomogeneous functions extend to sections of natural bundles, with the additional requirement that the coefficients $a_{z,k}$ are parallel along the vector field~$x\pd_x$ with respect to the Levi-Civita connection \cite[p.831]{BH14}.
\end{rmk}

\subsection{Operator algebra} \label{sec: operator algebra Q}
Let $B^{n+1}_+$ be the semi-ball defined previously. Consider a collar neighborhood $M_{\leq x_0}$ as a collar neighborhood of the infinity boundary. Suppose $g_0$ is a metric on a $B^{n+1}_+$ of the form $\frac{dx^2+h}{x^2}$, where $h$ is a smooth metric on $M$. We choose an extension $\nablatilde$ of the Levi-Civita connection of $h$ to the interior of $B^{n+1}_+$ such that
\[\widetilde{\nabla}(x \partial_x) = 0, \quad \nablatilde_{x \partial_x}(x \pd_y) = 0.\]  
The following two algebras of differential operators generated by $\nablatilde$ will be convenient to use in the proof of polyhomogeneity: we denote by $\cQ_1$ the subalgebra of differential operators on the space of symmetric 2-tensors generated by 
\[ \nablatilde_{x \pd_x}, \nablatilde_{x\pd_y} \text{ and polyhomogeneous functions in } \Adot_{0,0};\]
and denote by $\cQ_0$ the subalgebra of $\cQ_1$ generated by 
\[ \nablatilde_{x\pd_y} \text{ and polyhomogeneous functions in } \Adot_{1,0};\]
and let $\cQ = \cQ_0 + x\cQ_1$.

\section{Lovelock metrics} \label{sec: Lovelock tensor}
In this section, we introduce Lovelock metrics, a class of Riemannian metrics that generalize the well-known Einstein metrics by incorporating a nonlinear dependence on the second-order derivatives of the metric tensor. 
We begin with the formalism of double forms, a natural framework in which classical geometric notions such as metrics, curvatures, and curvature contractions find their natural realization. This formalism was first introduced by Kulkarni in \cite{Kul72} and was later extensively developed by Labbi in \cite{Lab05, Lab07, Lab08, Lab08-2}. Throughout this section, we consider a smooth Riemannian manifold $(X,g)$ of dimension~$m = n+1$.

\subsection{Double forms} 
Let $\Lambda^p T^*X \otimes \Lambda^q T^*X$, with $p,q \geq 0$ be the tensor product of exterior powers of the cotangent bundle. The \bfemph{ring of differential double forms} is the direct sum~$\Omega^{\bullet \otimes \bullet }(X) = \bigoplus_{p,q} \Omega^{p\otimes q}(X)$, where
\[\Omega^{p\otimes q}(X) = \cC^\infty (X;\Lambda^p T^*X \otimes \Lambda^q T^*X)\]
and it is a bi-graded associative ring. An element $\omega \in \Omega^{p\otimes q}(X)$ is called a \bfemph{$\boldsymbol{(p,q)}$-double form}. 

\begin{ex}
    A Riemannian metric $g = g_{ij} \,dx^i \otimes dx^j$ can be viewed as a double form in~$\Omega^{1\otimes 1}(X)$. 
    
    The Riemannian curvature tensor $R$ is a particularly interesting example. The algebraic symmetries observed in the components $R_{ijkl}$ suggest a natural way to identify the $(4,0)$-tensor $R$ with a $(2,2)$-double form $\Rm$. In local coordinates, we write
    \[\Rm = R_{ijkl} \,(dx^i \wedge dx^j) \otimes (dx^k \wedge dx^\ell).\]
    Similarly, we can identify the Ricci tensor $\Ric$, and the Schouten tensor $P$ as $(1,1)$-forms and the Weyl tensor $W$ as a $(2,2)$-form.
\end{ex}

Let $\cC^p(X)$ denote the space of symmetric $(p,p)$-double forms, defined as
\[
\cC^p(X) := \left\{ \omega \in \Omega^{p \otimes p}(X) \mid \omega(\xi, \eta) = \omega(\eta, \xi) \text{ for all } \xi, \eta \in \Lambda^p T^*X \right\}.
\]
The graded space $\cC(X) := \bigoplus_{p \geq 0} \cC^p(X)$ is what Kulkarni referred to in \cite[\S1]{Kul72} as the \bfemph{ring of curvature structures}. This terminology reflects the fact that the Riemann curvature tensor and its powers are naturally represented as elements of this ring. To avoid confusion with function spaces, we will use $\Omega^{p\otimes p}$ instead of $\cC^p$ to denote the curvature structures.

\subsection{The Kulkarni-Nomizu product and the contraction map}
Multiplication in the ring $\Omega^{\bullet \otimes \bullet}(X)$ is given by the \bfemph{Kulkarni-Nomizu product}
\begin{align*}
    .: \Omega^{p\otimes q}(X) \times \Omega^{r\otimes s}(X)  &\to \Omega^{(p+r)\otimes (q+s)}(X), \\
    (\omega, \eta) &\mapsto \omega. \eta 
\end{align*}
For $\omega \in \Omega^{p\otimes q}(X)$, $\eta \in \Omega^{r\otimes s}(X)$, we have
which is defined for $\omega = \alpha_1 \otimes \alpha_2$ and $\eta = \beta_1 \otimes \beta_2$ by 
\[\omega. \eta = (\alpha_1 \otimes \alpha_2) . (\beta_1 \otimes \beta_2) = (\alpha_1 \wedge \beta_1) \otimes (\alpha_1 \wedge \beta_2),\] 
and then extended to all double forms by linearity.
Alternatively, identifying a $(p,q)$-double form with a family of bilinear forms $\Lambda^p TX \otimes \Lambda^q TX \to \mathbb{R}$, the product is given by
\begin{alignat*}{2}
    \omega. \eta (V_1 &\wedge \cdots \wedge V_{p+r}; W_1 \wedge \cdots \wedge W_{q+s})
    &&  \\
    & = \frac{1}{p!q!r!s!} \sum_{\sigma \in S_{p+r}, \tau \in S_{q+s}} \operatorname{sgn}(\sigma \tau) \;  
    &&\omega (V_{\sigma(1)}\wedge \cdots \wedge V_{\sigma(p)} ; W_{\tau(1)} \wedge \cdots W_{\tau(q)})\\
    & 
    &&\eta (V_{\sigma(p+1)} \wedge \cdots \wedge V_{\sigma(p+r)} ; W_{\tau(q+1)} \wedge \cdots W_{\tau(q+s)}).
\end{alignat*}
In practice, we often omit the dot, and $\omega.\omega$ is denoted as $\omega^2$. 

\begin{ex} 
    The product of two (1,1)-forms $\omega$ and $\eta$ is given by 
    \[\omega. \eta = (\omega_{ik}\eta_{jl} + \omega_{jl}\eta_{ik} - \omega_{il}\eta_{jk} - \omega_{jk}\eta_{il}) \; \big((dx^i \wedge dx^j) \otimes (dx^k \wedge dx^\ell)\big).\]
    In particular, when $\omega = \eta = g$, $\omega. \eta = g^2 = 2(g_{ik}g_{jl} - g_{il}g_{jk}) \; \big((dx^i \wedge dx^j) \otimes (dx^k \wedge dx^\ell)\big)$.
\end{ex}
The \bfemph{contraction} is a map 
\[\ctr: \Omega^{p\otimes q}(X) \to \Omega^{(p-1)\otimes (q-1)}(X) \]
For $\omega \in \Omega^{p\otimes q}(X)$, and $\{e_i\}$ orthonormal basis of vector fields on $X$, we define
\[\ctr (\omega) (V_1 \wedge \cdots \wedge V_{p-1}; W_1 \wedge \cdots \wedge W_{q-1}) = \sum_{i=1}^m \omega (e_i \wedge V_1 \wedge \cdots \wedge V_{p-1}; e_i \wedge W_1 \wedge \cdots \wedge W_{q-1}),\]
with the convention that, if $p = 0$ or $q = 0$, we set $\ctr \omega = 0$. 
\begin{ex}
    The Ricci curvature and scalar curvature are contractions of $\Rm_g$: 
    \[\Ric_g = \ctr(\Rm_g) \quad \text{and} \quad \scal_g = \ctr^2(\Rm_g).\]
\end{ex}

When the double form is of the form $g^k \omega$, the contraction~$\ctr^\ell(g^k \omega)$ admits an expression in terms of~$g$ and~$\ctr^i(\omega)$ for $i \leq k$ \cite[Lemma 1.1]{Alb20}.
\begin{lem}[Contraction Formula]
    If $\omega \in \Omega^{p\otimes q}(X)$, then 
    \begin{equation} \label{eqn: contraction formula}
        \ctr^\ell(g^k \omega) = \sum_{r=0}^\ell \binom{m-p-q+\ell-k}{r} \frac{k!}{(k-r)!} \frac{\ell!}{(\ell-r)!} \;g^{k-r} \;\ctr^{\ell-r}(\omega),
    \end{equation}
    with the conventions that $g^{k - r} = 0$ if $k - r < 0$, and $\ctr^{\ell - r}(\omega) = 0$ if $\ell - r \notin [0, \min\{p, q\}]$.
\end{lem}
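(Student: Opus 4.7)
The plan is to argue by induction on $\ell$, with base case $\ell = 0$ trivial. The heart of the argument is the single-contraction identity
\[
\ctr(g^k \omega) \;=\; g^k \ctr(\omega) \;+\; k(m-p-q-k+1)\, g^{k-1} \omega, \qquad \omega \in \Omega^{p\otimes q}(X),
\]
which corresponds to taking $\ell = 1$ in the claim. First I would establish this identity, and then bootstrap it to the general formula via the inductive step.

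To prove the single-contraction identity, I would start with the $k=1$ special case $\ctr(g\omega) = g\ctr(\omega) + (m-p-q)\omega$, which is a direct computation. Using the shuffle formula for the Kulkarni--Nomizu product, $(g\cdot\omega)(e_i\wedge V_1\wedge\cdots\wedge V_p;\, e_i\wedge W_1\wedge\cdots\wedge W_q)$ splits into classes of terms according to whether each of the two new slots is paired (via $g$) with the other new slot or with one of the $V_j$'s or $W_j$'s. Summing over an orthonormal frame $\{e_i\}$ and using $\sum_i g(e_i,e_i) = m$ together with the skew-symmetry of $\omega$, these classes collapse to $m\omega - (p+q)\omega + g\ctr(\omega)$, establishing the $k=1$ case. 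For general $k$, I would induct on $k$ by writing $g^k\omega = g\cdot(g^{k-1}\omega)$ and applying the $k=1$ case with bidegree $(p+k-1, q+k-1)$; the resulting recursion telescopes to the coefficient $k(m-p-q-k+1)$.

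With the single-contraction identity in hand, the induction on $\ell$ proceeds by applying $\ctr$ term by term to the formula for $\ctr^\ell(g^k\omega)$. For the $r$-th summand, $\ctr^{\ell-r}(\omega)$ lies in bidegree $(p-\ell+r,\, q-\ell+r)$, so the single-contraction identity with $k' = k-r$ produces two contributions. Reindexing by the power $g^{k-s}\ctr^{\ell+1-s}(\omega)$ reduces the inductive step to the single algebraic identity
\[
(\ell+1-s)\binom{N+\ell}{s} \,+\, (N+2\ell-s+2)\binom{N+\ell}{s-1} \;=\; (\ell+1)\binom{N+\ell+1}{s},
\]
with $N = m - p - q - k$. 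This combinatorial identity is the main (minor) obstacle, and it collapses via the Pascal relation $\binom{N+\ell+1}{s} = \binom{N+\ell}{s} + \binom{N+\ell}{s-1}$ together with $s\binom{N+\ell}{s} = (N+\ell-s+1)\binom{N+\ell}{s-1}$. Boundary cases in which $k - r < 0$ or $\ell - r$ exits $[0,\min(p,q)]$ are absorbed in the stated conventions and pose no issue.
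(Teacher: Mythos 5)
The paper does not actually prove this lemma: it is quoted from Albin \cite[Lemma 1.1]{Alb20}, so there is no in-paper argument to compare against. Your proof is correct and is the natural one. The single-contraction identity $\ctr(g^k\omega)=g^k\ctr(\omega)+k(m-p-q-k+1)\,g^{k-1}\omega$ is exactly the $\ell=1$ instance of the claimed formula; your $k=1$ base case is the standard Kulkarni--Labbi identity $\ctr(g\omega)=g\ctr(\omega)+(m-p-q)\omega$, and the recursion $g^{k}\omega=g\cdot(g^{k-1}\omega)$ applied in bidegree $(p+k-1,q+k-1)$ does telescope to the coefficient $k(m-p-q-k+1)$. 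In the induction on $\ell$, the bidegree bookkeeping is right: contracting the $r$-th summand produces the factor $(k-r)(N+2\ell-r+1)$ with $N=m-p-q-k$, and matching the coefficient of $g^{k-s}\ctr^{\ell+1-s}(\omega)$ reduces precisely to the identity $(\ell+1-s)\binom{N+\ell}{s}+(N+2\ell-s+2)\binom{N+\ell}{s-1}=(\ell+1)\binom{N+\ell+1}{s}$, which follows from Pascal's rule together with the absorption identity $s\binom{N+\ell}{s}=(N+\ell-s+1)\binom{N+\ell}{s-1}$, as you say. Two small points worth making explicit in a written version: the upper index $N+\ell$ may be negative, so one should interpret $\binom{N+\ell}{r}$ as the generalized binomial coefficient, for which Pascal and absorption are polynomial identities in the upper argument and remain valid; and the stated conventions ($g^{k-r}=0$ for $k-r<0$, $\ctr^{\ell-r}(\omega)=0$ for $\ell-r\notin[0,\min\{p,q\}]$) indeed absorb all boundary terms of the recursion, so no extra case analysis is needed.
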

We refer to Equation~\eqref{eqn: contraction formula} as the \bfemph{contraction formula}. The following special cases and a corollary will be used in Sections~\ref{sec: ambient obstruction} and~\ref{sec: singular Yamabe}.

\begin{prop} \label{prop: -1 contraction of Rm and g}
    Applying the contraction formula, we have for an $(1,1)$-double form $\eta$ and a $(2,2)$-double form $\omega$
    \begin{align*}
        \ctr^{\ell} \Big( g^\ell \eta \Big) &= \frac{(m-2)!\,\ell!}{(m-\ell-1)!} \; \Big[ (m-\ell-1) \eta + \ell \, \ctr(\eta) \,g \Big],\\
        \ctr^{\ell} \Big( g^{\ell-1} \omega \Big) &= \frac{(m-3)!\,\ell!}{(m-\ell-1)!} \; \Big[ (m-\ell-1) \ctr(\omega) + \frac{\ell-1}{2} \ctr^2(\omega)\, g \Big].
    \end{align*}
\end{prop}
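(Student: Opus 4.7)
The plan is to derive both identities as direct specializations of the contraction formula \eqref{eqn: contraction formula}, tracking only which summands survive the vanishing conventions $g^{k-r}=0$ when $k-r<0$ and $\ctr^{\ell-r}(\omega)=0$ when $\ell-r\notin[0,\min\{p,q\}]$. The content is purely combinatorial bookkeeping; no new geometric input is required, and there is no recursion or induction.

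For the first identity I set $p=q=1$ (since $\eta \in \Omega^{1\otimes 1}$) and $k=\ell$ in \eqref{eqn: contraction formula}. The binomial coefficient becomes $\binom{m-2}{r}$, and because $\ctr^{\ell-r}(\eta)=0$ unless $\ell-r\in\{0,1\}$, only $r=\ell$ and $r=\ell-1$ contribute. Substituting gives
\[
\ctr^{\ell}(g^{\ell}\eta) = (\ell!)^2 \binom{m-2}{\ell}\,\eta + (\ell!)^2 \binom{m-2}{\ell-1}\,\ctr(\eta)\,g,
\]
after which expanding $\binom{m-2}{\ell} = \frac{(m-2)!}{\ell!(m-\ell-2)!}$ and $\binom{m-2}{\ell-1} = \frac{(m-2)!}{(\ell-1)!(m-\ell-1)!}$ allows me to factor out the common coefficient $\frac{(m-2)!\,\ell!}{(m-\ell-1)!}$ and recover the stated form.

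For the second identity I set $p=q=2$ and $k=\ell-1$, so the binomial coefficient is $\binom{m-3}{r}$. The factor $(\ell-1)!/(\ell-1-r)!$ together with $g^{\ell-1-r}=0$ restricts $r\leq\ell-1$, while $\ctr^{\ell-r}(\omega)=0$ unless $\ell-r\in\{0,1,2\}$ further restricts $r\geq\ell-2$. Only $r=\ell-1$ and $r=\ell-2$ survive, producing
\[
\ctr^{\ell}(g^{\ell-1}\omega) = \binom{m-3}{\ell-1}(\ell-1)!\,\ell!\,\ctr(\omega) + \binom{m-3}{\ell-2}(\ell-1)!\,\tfrac{\ell!}{2}\,\ctr^{2}(\omega)\,g,
\]
and expanding the binomials lets me pull out the common factor $\frac{(m-3)!\,\ell!}{(m-\ell-1)!}$ to obtain the claimed expression.

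The only real obstacle is keeping track of factorials and binomial coefficients. A sanity check on edge cases is worth including: when $\ell=0$ the first identity reads $\eta = \frac{(m-2)!}{(m-1)!}(m-1)\eta = \eta$, and when $\ell=1$ the second gives $\ctr(\omega)$ on both sides, since the coefficient $\frac{\ell-1}{2}$ vanishes in accordance with the vanishing conventions attached to \eqref{eqn: contraction formula}.
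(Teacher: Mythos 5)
Your proposal is correct and follows exactly the route the paper intends: the proposition is stated as a direct specialization of the contraction formula \eqref{eqn: contraction formula}, with only the terms $r=\ell,\ell-1$ (respectively $r=\ell-1,\ell-2$) surviving the vanishing conventions, and your factorial bookkeeping reproduces the stated coefficients. The paper gives no further argument beyond "applying the contraction formula," so your write-up simply supplies the details of the same computation.
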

\begin{prop} \label{prop: full contraction l form times 1 form}
    For an $(\ell,\ell)$-double form $\tau$ and an $(1,1)$-double form $\eta$, we have
    \[\ctr^{\ell+1} (\tau \cdot \eta) = -(\ell+1)(\ell-1) \, \ctr^{\ell}(\tau) \, \ctr(\eta) + \frac{1}{2}(\ell+1) \ell \; \ctr^2 \Big( \ctr^{\ell-1}(\tau) \cdot \eta \Big). \]
\end{prop}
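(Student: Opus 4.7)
The plan is to establish this identity by a direct computation in an orthonormal frame, expanding the definitions of the Kulkarni-Nomizu product and the iterated contraction, and then splitting the resulting combinatorial sum into two cases that match the two terms on the right-hand side.

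I would begin by choosing an orthonormal frame $\{e_i\}_{i=1}^m$ of $TX$ and writing
\[
\ctr^{\ell+1}(\tau\cdot\eta) \;=\; \sum_{k_1,\ldots,k_{\ell+1}=1}^{m} (\tau\cdot\eta)\big(e_{k_1}\wedge\cdots\wedge e_{k_{\ell+1}};\,e_{k_1}\wedge\cdots\wedge e_{k_{\ell+1}}\big).
\]
Expanding the Kulkarni-Nomizu product via the permutation formula of Section~\ref{sec: Lovelock tensor} gives a sum over pairs of permutations $\sigma,\pi\in S_{\ell+1}$ in which $\sigma$ and $\pi$ select which frame vector is fed into the $\eta$-factor on each side, while the remaining $\ell$ vectors are distributed into $\tau$. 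Using the skew-symmetry of $\tau$ and $\eta$ in each block of arguments, the double sum over $\sigma$ and $\pi$ collapses to a sum indexed only by the pair $(a,b) = (\sigma(\ell+1),\pi(\ell+1)) \in \{1,\ldots,\ell+1\}^2$, with permutations of the remaining indices contributing identical copies that cancel the prefactor $1/(\ell!)^2$.

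I would then split the resulting sum into two cases. In the \emph{diagonal} case $a=b$, the frame vector $e_{k_a}$ enters both slots of $\eta$, so the sum over $k_a$ extracts a factor of $\ctr(\eta)$, while the remaining $\ell$ indices contract $\tau$ to produce $\ctr^{\ell}(\tau)$. There are $\ell+1$ such choices of $a$, and a careful accounting of signs coming from the skew-symmetrization produces the coefficient $-(\ell+1)(\ell-1)$ of $\ctr^{\ell}(\tau)\,\ctr(\eta)$. In the \emph{off-diagonal} case $a\neq b$, the frame vector $e_{k_a}$ appears in $\eta$ on the left but in $\tau$ on the right, and vice versa for $e_{k_b}$. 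Summing over $k_a$ and $k_b$ couples $\eta$ to $\ctr^{\ell-1}(\tau)$; after rearranging via the definition of the Kulkarni-Nomizu product, the contribution is recognized as $\ctr^2\!\big(\ctr^{\ell-1}(\tau)\cdot\eta\big)$, with the prefactor $\tfrac{1}{2}(\ell+1)\ell$ arising from the $(\ell+1)\ell$ ordered pairs $(a,b)$ with $a\neq b$ together with the symmetrization inherent to the outer $\ctr^2$.

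The main obstacle will be the sign bookkeeping, especially in the off-diagonal case: one needs to verify that the transpositions required to bring the cross-paired indices into the canonical form of a Kulkarni-Nomizu product $\ctr^{\ell-1}(\tau)\cdot\eta$ produce signs which combine cleanly with $1/(\ell!)^2$ to yield the stated numerical coefficients. As a consistency check, the formula can be tested on $\tau=g^{\ell}$, $\eta=g$ using the contraction formula~\eqref{eqn: contraction formula} and Proposition~\ref{prop: -1 contraction of Rm and g}: both sides reduce to $\frac{m!\,(\ell+1)!}{(m-\ell-1)!}$, confirming the coefficients.
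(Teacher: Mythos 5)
Your overall strategy (direct expansion in an orthonormal frame) can be made to work, but the way you match the two cases to the two terms of the identity is wrong, and following the plan as written would produce incorrect coefficients. Expanding the Kulkarni--Nomizu product and grouping by which slot $(a,b)$ feeds $\eta$ on each side, the sign of the $(a,b)$-block is $(-1)^{a+b}$ before reordering, and after reordering the arguments of $\tau$ every diagonal block $a=b$ carries sign $+1$ while every off-diagonal block carries sign $-1$. Hence the diagonal case yields exactly $+(\ell+1)\,\ctr^{\ell}(\tau)\,\ctr(\eta)$ --- a positive coefficient linear in $\ell$, so no amount of sign bookkeeping within that case can produce $-(\ell+1)(\ell-1)$ --- and the off-diagonal case yields $-\ell(\ell+1)$ times the pure cross-pairing scalar $g^{ab}g^{ij}\big[\ctr^{\ell-1}(\tau)\big]_{a,j}\eta_{i,b}$. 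That cross-pairing scalar is \emph{not} $\ctr^2\big(\ctr^{\ell-1}(\tau)\cdot\eta\big)$: by the two-trace identity for $(1,1)$-forms (Equation (1.1) of Albin, quoted in \S\ref{sec: singular Yamabe}), $\ctr^2\big(\ctr^{\ell-1}(\tau)\cdot\eta\big)=2\,\ctr^{\ell}(\tau)\ctr(\eta)-2\,g^{ab}g^{ij}\big[\ctr^{\ell-1}(\tau)\big]_{a,j}\eta_{i,b}$, i.e.\ it also contains a trace-times-trace piece that never arises from off-diagonal pairings. The missing step in your proposal is precisely this conversion: writing the cross term as $\ctr^{\ell}(\tau)\ctr(\eta)-\tfrac12\ctr^2\big(\ctr^{\ell-1}(\tau)\cdot\eta\big)$ and regrouping turns $(\ell+1)\ctr^{\ell}(\tau)\ctr(\eta)-\ell(\ell+1)(\text{cross})$ into the stated $-(\ell+1)(\ell-1)\,\ctr^{\ell}(\tau)\ctr(\eta)+\tfrac12(\ell+1)\ell\,\ctr^2\big(\ctr^{\ell-1}(\tau)\cdot\eta\big)$. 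Without it, your attribution of the negative coefficient to the diagonal case and of the $\tfrac12$ to ``symmetrization inherent to the outer $\ctr^2$'' is simply incorrect.

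For comparison, the paper avoids all of this bookkeeping: it observes that by universality the full contraction must be a fixed linear combination $A_\ell\,\ctr^{\ell}(\tau)\ctr(\eta)+B_\ell\,\ctr^2\big(\ctr^{\ell-1}(\tau)\cdot\eta\big)$, then determines $A_\ell,B_\ell$ by specializing $\tau=g^{\ell-1}\omega$ and applying the contraction formula \eqref{eqn: contraction formula}, isolating $B_\ell$ from the term that occurs only in $\ctr^2(\omega\eta)$. Your consistency check with $\tau=g^{\ell}$, $\eta=g$ is correct (both sides do equal $\tfrac{m!\,(\ell+1)!}{(m-\ell-1)!}$), but note that a single such test gives only one linear relation and cannot by itself pin down both coefficients.
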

\begin{proof}
    We begin by observing that the left-hand side is a full contraction. The contraction of $\tau \cdot \eta$ can be expressed as a linear combination of terms in which indices of $\tau$ and $\eta$ are paired with other indices of $\tau$ and $\eta$ using the metric. Since $\eta$ is a $(1,1)$-form, in terms of $\eta$ there will be two types of terms:

    \begin{itemize}
        \item When the indices of $\eta$ are summed over, this corresponds to $\ctr^{\ell}(\tau) g^{ij} \eta_{i,j}$,
        \item Alternatively, when the indices of $\eta$ are paired with other indices of $\tau$, this corresponds to
        $\Big[\ctr^{\ell-1}(\tau)\Big]_{a,b} g^{ja} g^{ib} \eta_{i,j}$.
    \end{itemize}  
    Since the contraction is purely combinatorial, we can express the result as a linear combination of the two terms above:
    \[\ctr^{\ell+1} (\tau \cdot \eta) = A_\ell \, \ctr^{\ell}(\tau) \, \ctr(\eta) + B_\ell \, \ctr^2 \Big( \ctr^{\ell-1}(\tau) \cdot \eta \Big), \]
    where the constants $A_\ell$ and $B_\ell$ are independent of $\tau$ and $\eta$. To determine these constants, consider $\tau = g^{\ell-1}\omega$ for some arbitrary $\omega, \eta \in \Omega^{1\otimes 1}$. Applying the contraction formula, we obtain:
    \begin{align} \label{eqn: full ctr prop}
    \begin{split}
        \frac{1}{2} (m-\ell)(\ell+1)\ell \,\ctr^2(\omega \eta) =& (m-1)\ell \,A_\ell \,\ctr(\omega) \ctr(\eta) \\
        &+ (m-\ell) B_\ell \, \ctr^2(\omega\eta) +  2 (m-1) (\ell-1) B_\ell \,\ctr(\omega) \ctr (\eta).
    \end{split}
    \end{align}
    Note that the term $g^{ij}g^{ab} \omega_{i,b} \eta_{j,a}$ only appears in $\ctr^2(\omega\eta)$. To satisfy Equation \eqref{eqn: full ctr prop}, we must have:
    \[\frac{1}{2} (m-\ell)(\ell+1)\ell \,\ctr^2(\omega \eta) = (m-\ell) B_\ell \, \ctr^2(\omega\eta).\]
    Solving for $B_\ell$, we find that $B_\ell = \frac{1}{2}(\ell+1) \ell$. Substituting this into Equation \eqref{eqn: full ctr prop}, we find that $A_\ell = - (\ell+1)(\ell-1)$.
\end{proof}

\subsection{Lovelock metrics and Lovelock tensors}
Lovelock geometry extends Einstein's theory of gravity by incorporating nonlinear dependencies on Riemannian curvature \cite{Lov71}. When seeking symmetric, divergence-free tensors that are polynomials in the Riemannian metric $g$ and its first two derivatives, the solution space is spanned by $g$ and
\begin{equation} \label{eqn: generalized Ricci curvature}
    \delta^{i i_1 \cdots i_{2q}}_{j j_1 \cdots i_{2q}} R_{i_1 i_2}^{j_1 j_2} \cdots R_{i_{2q-1} i_{2q}}^{j_{2q-1} j_{2q}}, 
\end{equation}
where $\delta^{i i_1 \cdots i_{2q}}_{j j_1 \cdots i_{2q}}$ denotes the generalized Kronecker delta 
\[\delta^{i_1 \cdots i_k}_{j_1 \cdots j_k} = \det\begin{pmatrix}
    \delta^{i_1}_{j_1} & \delta^{i_1}_{j_2} & \cdots & \delta^{i_1}_{j_k}\\
    \delta^{i_2}_{j_1} & \delta^{i_2}_{j_2} & \cdots & \delta^{i_2}_{j_k}\\
    \vdots & \vdots & \ddots & \vdots\\
    \delta^{i_k}_{j_1} & \delta^{i_k}_{j_2} & \cdots & \delta^{i_k}_{j_k}
\end{pmatrix} = \sum_{\sigma \in S_k} \operatorname{sgn}(\sigma) \delta^{i_1}_{j_{\sigma(1)}} \delta^{i_2}_{j_{\sigma(2)}}  \cdots \delta^{i_k}_{j_{\sigma(k)}} . \]
In particular, when $2q > m$, $\delta^{i i_1 \cdots i_{2q}}_{j j_1 \cdots i_{2q}} = 0$.
Note that in dimension $4$, the solution space is generated by $g$ and its Ricci curvature $\Ric_g$. In higher dimensions, the expressions in Equation \eqref{eqn: generalized Ricci curvature} generalize the notion of Ricci curvatures. We denote  
\begin{equation*}
    \Big(\Ricqq_g\Big)_i^j = \delta^{i i_1 \cdots i_{2q}}_{j j_1 \cdots i_{2q}} R_{i_1 i_2}^{j_1 j_2} \cdots R_{i_{2q-1} i_{2q}}^{j_{2q-1} j_{2q}}
\end{equation*}
Using the contraction map, we can rewrite the above expression using double forms:
\[\Ric_g^\qq = \ctr[h]^{2q-1}\big(\Rm_g^q\big), \quad 2q \leq n.\]
We call $\Ric_g^\qq$ the \bfemph{Ricci-$\boldsymbol{\qq}$ curvature}.
The trace of $\Ricqq_g$ is called the \bfemph{scalar-$\qq$ curvature} $\scal_g^\qq$, as it generalizes the scalar curvature. 

Analogous to the Einstein tensor, the \bfemph{Lovelock-$\boldsymbol{\qq}$ tensor} or generalized Einstein tensor is a symmetric, divergence-free tensor that depends on the metric $g$ and its derivatives up to order 2, given by
\[E_g^\qq = \Ric_g^\qq - \frac{1} {2q} \scal^\qq_g g.\]
Then a \bfemph{Lovelock metric} $g$ with coupling constants $\alpha_q$ and $\lambda$ is one that satisfies a \bfemph{Lovelock equation}
\begin{equation} \label{eqn: Lovelock equation}
    \sum \alpha_q E_g^\qq = \lambda g.
\end{equation} 

We refer to equations that only involve one Lovelock-$(2q)$ tensor as a \bfemph{pure Lovelock equation} and we refer to the equation $\Ricqq_g = \lambda^{(2q)} g$ for some constant $\lambda^{(2q)}$ as the \bfemph{Lovelock-$\boldsymbol{\qq}$ equation}. We will study a linear combination of Lovelock-$(2q)$ tensors, given by
\begin{equation} \label{eqn: Lovelock tensor}
    F_\alpha(g) = \sum \alpha_q \left( \Ricqq_g - \lambda^\qq g \right) -\frac{\alpha_q}{2q} \left( \scalqq_g - (n+1)\lambda^\qq  \right)g = 0,
\end{equation}
where 
\[\lambda^\qq = \left( -\frac{1}{2}\right)^q \frac{n! \; (2q)!}{(n-2q+1)!}\]
is chosen so that Equation \eqref{eqn: Lovelock tensor} is satisfied by the hyperbolic metrics for any choice of coefficients $\alpha = (\alpha_1, \alpha_2, \cdots, \alpha_q)$.
Comparing with Equation \eqref{eqn: Lovelock tensor}, we see that
\[\lambda(\alpha) = \sum_q \left( 1+\frac{n+1}{2q} \right) \alpha_q \, \lambda^\qq.\] 
\begin{ex}
    For a metric $g$ with constant sectional curvature $\kappa$, we have $\Rm_g = \frac{\kappa}{2}g^2$, and $\Ric^\qq_g = \kappa^q \lambda^\qq g$. Such a  metric $g$ satisfies the Lovelock equations for arbitrary choices of the coupling constants $\alpha_q$ if we replace $\lambda^\qq$ with $\left( -\frac{\kappa}{2}\right)^q \frac{n! \; (2q)!}{(n-2q+1)!}$.
\end{ex}

\begin{ex}
    In \cite{DH06, DH08}, the authors study Lovelock metrics of Taub-NUT type on 8-dimensional spaces of the form $X = \R_+ \times P$, where $P$ is a $U(1)$-bundle over some 6-dimensional Einstein manifold $B$. The Taub-NUT metrics are of the form
    \[g = F(r) (d\tau+N\cA)^2 + \frac{1}{F(r)} dr^2 + (r^2 - N^2) \check{g}_B.\]
    Here, $\check{g}_B$ is an Einstein metric on $B$, $\tau$ is the coordinate on the $U(1)$-fibers representing the proper time, $\cA$ denotes the connection 1-form on $P$, $N$ is a constant representing the NUT charge, and $F(r)$ is a radial function. 
    
    Explicit examples with base manifolds $B$ given by $\C P^3$, products of copies of $\C P^2, S^2$ and the torus $T^2$ have been worked out in \cite[\S 3]{DH08}.
\end{ex}

A conformally compact metric $g$ that satisfies a Lovelock equation is called a \bfemph{conformally compact Lovelock metric} or a \bfemph{Poincaré–Lovelock metric}. The corresponding Riemannian manifold $(X,g)$ with boundary is referred to as a \bfemph{conformally compact Lovelock manifold}.

\begin{ex}
    A model example of a conformally compact Lovelock metric with arbitrary choice of $\alpha$ (since it has constant sectional curvature) is the real hyperbolic space, realized as the unit ball $B^{n+1} \subset \mathrm{R}^{n+1}$ with the Poincar\'e metric $g_+ = \frac{4 g_{\mathrm{euc}}}{(1-|x|^2)^2}$ and with boundary~$S^n$. Here, $g_{\mathrm{euc}} = dx_0^2 + \cdots + dx_n^2 $ is the flat metric and $|x|^2 = x_0^2 + \cdots + x_n^2$ is the Euclidean norm. There are many choices of the boundary defining function, for example, $\rho_1 = 1-|x|^2$ and $\rho_2 = \frac{1-|x|}{1+|x|}$. The two corresponding boundary metrics belong to the same conformal class $[g_{S^n}]$ that contains the standard round metric on the sphere. When the conformal infinity $[h]$ is close enough (in the $C^{2,\gamma}$ sense) to the round metric, there is a unique (up to diffeomorphism) conformally compact Lovelock metric on the ball whose conformal infinity is the given $[h]$ \cite[Theorem 3.1]{Alb20}.
\end{ex}

Hereafter we assume that $X$ is a manifold with boundary and that we are given a fixed boundary defining function $x$ of $X$. A notion closely related to a conformally compact Lovelock metric is an asymptotically hyperbolic metric. We say that $g$ is an \bfemph{asymptotically hyperbolic metric} if its sectional curvature approaches the negative constant number $-\kappa$ as $x \to 0$. We assume $\kappa = 1$ for simplicity, as this can always be achieved by rescaling the metric.

Consider a conformally compact metric $g = x^{-2} \gbar$. The asymptotic behavior of its Riemannian curvature as $x \to 0$ is given by $\Rm = -\frac{\kappa}{2} \gbar^2 x^{-4} + O(x^{-3}), \kappa > 0$ \cite{Maz88}, where~$\kappa$ is the conformal invariant $|dx|^2_{\gbar} \big|_{x=0} = |dx/x|^2_{g} \big|_{x=0} = \gbar^{ij} x_i x_j \big|_{x=0}$. The geometric meaning of the negative norm $-|dx|^2_{\gbar} \big|_{x=0}$ is the asymptotic sectional curvature of $g$. 
If the Lovelock equation $F_\alpha(g) = 0$ holds, then the asymptotic behavior of $\Rm_g$ implies 
\begin{equation} \label{eqn: asymptotic behavior of F_g}
    F_\alpha(g) = \sum \lambda^\qq \alpha_q \left( 1 - \frac{n+1}{2q} \right) (\kappa^q - 1) \gbar x^{-2} + O(x^{-1}) = 0.
\end{equation}
Hence $\sum_q \lambda^\qq \alpha_q \left( 1 - \frac{n+1}{2q} \right) (\kappa^q - 1) = 0$. In particular, a pure Lovelock equation implies~$\kappa = 1$, so a conformally compact pure Lovelock metric is asymptotically hyperbolic.

Following \cite{Alb20}, we define the \bfemph{limit set of sectional curvatures} for a given $q$-tuple~$\alpha$ as
\begin{multline*}
    \LimSec(\alpha) = \bigg\{\kappa > 0: \sum_q \lambda^\qq \alpha_q \left( 1 - \frac{n+1}{2q} \right) \big( \kappa^q-1 \big) = 0, \\
    \text{ and } A_1(\alpha,\kappa) = \sum \alpha_q \Big(-\frac{\kappa}{2} \Big)^{q-1} \frac{(n-2)!}{2} \frac{(2q)!}{(n-2q)!} \neq 0 \bigg\}.
\end{multline*}
We only work with $\alpha$ such that $\LimSec(\alpha) \neq \emptyset$ so there exists $\kappa \in \LimSec(\alpha)$. By Theorem~1 \cite{Alb20}, there exists a solution to the Lovelock equation 
\[\sum_q \alpha_q E^\qq_g = \lambda(\alpha),\] 
up to higher order terms of $x$, whose sectional curvature converges to $-\kappa$ as $x \to 0$. Again, by rescaling the metric, we can assume $\kappa = 1$ for simplicity. A particular case is when the constants $a_q$ are alternating, $1 \in \LimSec(\alpha)$.
\section{Gauge condition}
In this section, we examine the gauge condition necessary for applying elliptic regularity. We will see that, in general, the Lovelock tensor 
\[F_\alpha(g) = \sum \alpha_q \left( \Ricqq_g - \lambda^\qq g \right) - \frac{\alpha_q}{2q} \left( \scalqq_g - (n+1)\lambda^\qq  \right)g,\]
is not elliptic as operators of $g$. To overcome this issue, Albin \cite{Alb20} following \cite{GL91} applied DeTurck's trick to modify the Lovelock tensor. We start by introducing the following operators:

\begin{align*}
    &\delta_g: \text{symmetric 2-tensors} \to \text{one forms}, &&(\delta_g t)_i = - g^{jk}\,  t_{ij,k},\\
    &\delta_g: \text{one forms} \to \text{functions}, &&\delta_g\omega = - g^{jk} \, \omega_{j,k},\\
    &\delta_g^*: \text{one forms} \to \text{symmetric 2-tensors}, &&(\delta_g^* \omega)_{ij} = \frac{1}{2}(\omega_{i,j} + \omega_{j,i}),\\
    &gt^{-1}: \text{one forms} \to \text{one forms}, &&(gt^{-1}\omega)_i = g_{ij} \, t^{jk} \, \omega_k,\\
    &\cG_g: \text{symmetric 2-tensors} \to \text{symmetric 2-tensors}, &&(\cG_g \Phi)_{ij} = \Phi_{ij} - \frac{1}{2}g^{kl} \, \Phi_{kl} \, g_{ij}.
\end{align*}
We define the \bfemph{Bianchi operator} to be $B_g = \delta_g \cG_g$. By the contracted Bianchi identity, we have $B_g(g) = 0$.

When viewed as an operator on metrics, the linearization of the Lovelock tensor at an asymptotically hyperbolic metric $g_0$ is given in \cite[Lemma 3.2]{Alb20}. Let  $r$ be a symmetric 2-tensor of the form $r = x^N \bar{r}$ for some~$N$, where $x$ is a special boundary defining function for $g_0$ and $\bar{r} \in C^2(X;\sS^2 TX)$. Then the linearization of $F_\alpha(g)$ is given by
\begin{multline*}
    D\Big(F_\alpha(g)\Big)_{g_0}(r) \\
    = -\sum_q \frac{\lambda^\qq \alpha_q}{2n(n-1)} \bigg[ q(n-1)\left( 1 - \frac{n+1}{2q} \right) (\Delta_{g_0} + 2n)(\ctr[g_0](r) g_0) + (n-2q+1) (\Delta_{g_0} - 2)(r_0)  \bigg]\\
    + (c_1 \delta_{g_0}^* + c_2 g\delta_{g_0}) B_{g_0}(r) + O(x^{N+1}),
\end{multline*}
where the symmetric 2-tensor $r$ decomposes into a trace term $\ctr[g_0](r) g_0$ and a traceless term~$r_0 = r - \ctr[g_0](r) g_0$. Moreover, if $g_0$ has constant sectional curvature, then the $O(x^{N+1})$ term is identically zero.

Note that the linearization is almost a Laplace type operator except there are second order operators $\delta_{g_0}B_{g_0}(r)$, $\delta^*_{g_0}B_{g_0}(r)$ and $O(x^{N+1})$. Within a sufficiently small collar neighborhood of the boundary and under the assumption that we are looking at the linearization at asymptotically hyperbolic metrics, the term $O(x^{N+1})$ is small. In order to obtain a linearization that is dominated by the Laplace type operators, we use DeTurck's trick to get rid of the terms involving the Bianchi operator $B_g$. We define the \bfemph{modified Lovelock tensor}
\begin{equation} \label{eqn: modified Lovelock tensor}
    Q_\alpha(g,t) = F_\alpha(g) - \Phi_\alpha(g,t), 
\end{equation}
where $t$ is an auxiliary metric and
\begin{multline*}
    \Phi_\alpha(g,t) = (c_1 \delta_g^* + c_2 g\delta_g) \Big( gt^{-1} B_g(t) \Big)\\
    = -\sum_q \frac{\lambda^\qq}{n(n-1)} \alpha_q \bigg[ (n-2q+1) \delta^*_g - \Big( (q-1) - \frac{n-1}{2} \Big) g\delta_g \bigg] \Big( gt^{-1} B_g(t) \Big).
\end{multline*}
Notice that the term $\Phi_\alpha$ is chosen so that the linearization $D_1\Phi_\alpha$ with respect to the first argument cancels the second order terms involving $B_g(r)$ that obstruct ellipticity. More precisely, by Lemma 3.2 and 3.3 in \cite{Alb20}, the linearization of $Q_\alpha(g,t)$ with respect to the first argument is given by
\begin{align*}
    \Big( D_1Q_\alpha(g,t)\Big)(r) = \frac{A_1(\alpha)}{4} \bigg[ -(n-1) (\Delta_{g_0} + 2n)(\ctr[g_0](r) g_0)  + 2 (\Delta_{g_0} - 2)(r_0) \bigg] + O(x^{N+1}).
\end{align*}
Here the constant $A_1(\alpha)$ is the same as that defined in \cite[p.4]{Alb20} with $\kappa = 1$:
\begin{align*}
    A_1(\alpha) &= \sum_q \alpha_q \left( -\frac{1}{2} \right)^{q-1} \frac{(n-2)!}{2} \frac{(2q)!}{(n-2q)!}.
\end{align*}
So within a sufficiently small collar neighborhood of $M$, the linearization of the modified Lovelock tensor at an asymptotically hyperbolic Lovelock metric is elliptic. 
\section{Invertibility of Laplace operators} \label{sec: invertibility of Laplace operators}

\subsection{Indicial operators}
The elliptic regularity that we are going to use to demonstrate the regularity of asymptotically hyperbolic metrics hinges upon Laplace type operators of the form $\Delta + \cK$, where $\Delta$ is the connection Laplacian and $\cK$ is a bundle endomorphism on the bundle of symmetric $r$-tensors. We will use the Green's operator to construct an approximating sequence of the metric in \S\ref{sec: phg}. These Laplace type operators have been studied in \cite{GL91, Lee06}. We will often use the case when $\cK$ is a constant, either $2n$ or $-2$. 

Let $X$ be a manifold with boundary $\pd X = M$. Let 
\[P: C^\infty(X;E) \to C^\infty(X;F)\]
be a 0-differential operator of order $k$ acting between tensor bundles $E$ and $F$.
\begin{defn}
    The \bfemph{indicial operator} of $P$ at $\mu\in\C$ is given by
    \begin{align*}
        I_\mu(P): E|_{M} &\to F|_{M}\\
        u &\mapsto x^{-\mu} P(x^\mu \bar{u})|_M,
    \end{align*}
    where $u$ is a smooth section of $E|_M$ and $\bar{u}$ is a smooth section of $E \to X$ that extends $u$ so that $\bar{u}|_M = u$. An \bfemph{indicial root} $\mu$ is a complex number at which $I_\mu(P)$ is singular. 
\end{defn}

\begin{ex}
    For a $0$-differential operator $P$ of order $k$, which in local coordinates can be written as
    \[P = \sum_{j+|\gamma|\leq k} C_{j,\gamma}(x,y) (x\pd_x)^j (x\pd_y)^\gamma,\]
    the indicial operator of $P$ at $\mu$ is given by 
    \[I_\mu(P) = \sum_{j\leq k}  C_{j,0}(0,y) \mu^j.\]
\end{ex}    

Suppose $P$ is acting on sections of a tensor bundle $E$ of type $(r_1, r_2)$, where $r_1$ is the covariant rank and $r_2$ is the contravariant rank. We call $r = r_1 - r_2$ the \bfemph{weight} of $E$. If~$P$ is a formally self-adjoint operator, then by Corollary 4.5 in \cite{Lee06}, the indicial roots are symmetric with respect to the line $\left\{\operatorname{Re}(\mu) = \frac{n}{2} - r\right\}$. For concreteness, we denote the set of indicial roots labeled by some index set $I$ by $\big\{ \mu^{(i)}_+, \mu^{(i)}_- : i \in I \big\}$. We call the indicial roots $\mu^{(i)}_+$ with $\operatorname{Re}\big( {\mu^{(i)}_+} \big) \geq \frac{n}{2} - r$ the \bfemph{upper indicial roots} and the indicial roots $\mu^{(i)}_-$ with $\operatorname{Re}\big( {\mu^{(i)}_-} \big) \leq \frac{n}{2} - r$ the \bfemph{lower indicial roots}.

\begin{defn}        
    We define $\mu_+ = \inf \{\mu^{(i)}_+\}$ and the \bfemph{indicial radius} 
    \[R = R(P) = \mu_+ - \frac{n}{2} + r.\]
\end{defn}

We specialize in Laplace operators $\Delta_g + \cK$ acting on symmetric $r$-tensors on a connected asymptotic hyperbolic manifold $X^{n+1}$ of class $C^{l,\gamma}$, with $l \geq 2$ and $\gamma \in [0,1)$ . The following summarizes the results in \cite[Chapter 7]{Lee06} which help to compute the indicial roots of the operator $\Delta_g + c$, where $c$ is a constant.
\begin{prop} 
Let $c \in \mathbb{R}$. Consider the Laplace operator $\Delta_g + \cK$ acting on covariant symmetric $r$-tensors.  
    \begin{enumerate}
        \item $I_\mu(\Delta_g + \cK) = I_0(\Delta_g + \cK) + \mu(n-\mu-2r)$;

        \item on trace free symmetric $r$-tensors, $R(\Delta_g) = \sqrt{\frac{n^2}{4} + r}$.

        \item let $R = R(\Delta_g + \cK) > 0$, then the indicial radius $R' = R(\Delta_g + \cK+c)$ is positive if and only if $c+R^2 > 0$. In this case $R' = \sqrt{c+R^2}$.
    \end{enumerate}
\end{prop}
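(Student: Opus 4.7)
The plan is to prove the three parts in sequence, each one building on the previous. Part (i) is a direct local computation of how the Laplace-type $0$-differential operator $\Delta_g + \cK$ acts on the scaling mode $x^\mu \bar{u}$ near the boundary. In a collar where $g = (dx^2 + h_x)/x^2$, I would expand the connection Laplacian $\Delta_g$ on symmetric covariant $r$-tensors in a $0$-frame, producing a decomposition of the form
\[
\Delta_g = -(x\pd_x)^2 + n(x\pd_x) + \widetilde{\Delta}_{h_x} + (\text{curvature/connection endomorphism}) + x \cdot \cE,
\]
where $\cE$ is a $0$-differential operator with coefficients bounded up to the boundary. The key point is the weight shift: a smooth $r$-covariant tensor $\bar u$, when written in a $0$-frame, carries factors of $x$ from the identification $T^*X \simeq x \, \zT^*X$ on each tensor slot, so commuting $\Delta_g$ past $x^\mu$ picks up an additional $-2r\mu$ beyond the scalar contribution $\mu(n-\mu)$. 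Restricting to $x = 0$ and observing that $\cK$ contributes only a zeroth-order shift to $I_0$ yields $I_\mu(\Delta_g + \cK) = I_0(\Delta_g + \cK) + \mu(n - \mu - 2r)$.

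For (ii), I apply (i) with $\cK = 0$. On an eigenspace of $I_0(\Delta_g)$ with eigenvalue $\sigma$, the indicial equation $I_\mu = 0$ becomes $\mu^2 - (n - 2r)\mu - \sigma = 0$, whose upper root is $\mu_+ = \tfrac{n-2r}{2} + \tfrac{1}{2}\sqrt{(n-2r)^2 + 4\sigma}$, giving indicial radius $R = \tfrac{1}{2}\sqrt{(n-2r)^2 + 4\sigma}$. To recover the stated value $\sqrt{n^2/4 + r}$, it suffices to check that the minimum eigenvalue of $I_0(\Delta_g)$ on the trace-free subbundle equals $\sigma_{\min} = r(n+1-r)$. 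I would verify this by an explicit computation on the model hyperbolic half-space, where the boundary metric is flat and $I_0(\Delta_g)$ reduces to a constant endomorphism on trace-free symmetric $r$-tensors; this diagonalization (and its extension to variable $h_0$, since $I_0$ depends only on $h_0$) is worked out in Chapter 7 of \cite{Lee06}.

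Part (iii) is an algebraic consequence of (i). Applying (i) to $\Delta_g + \cK + c$ shifts each eigenvalue $\sigma$ of $I_0(\Delta_g + \cK)$ to $\sigma + c$, so the upper root associated to $\sigma$ becomes $\mu_+(\sigma, c) = \tfrac{n}{2} - r + \sqrt{(\tfrac{n}{2} - r)^2 + \sigma + c}$, provided the radicand is nonnegative. Taking the infimum over $\sigma$ and using $R^2 = (\tfrac{n}{2} - r)^2 + \sigma_{\min}$, we obtain $R'(c)^2 = R^2 + c$, which is positive precisely when $R^2 + c > 0$, giving $R' = \sqrt{R^2 + c}$.

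The main obstacle I anticipate is the careful bookkeeping in (i) of the weight shift $-2r\mu$: one must correctly relate the $0$-frames, in which $\Delta_g$ has a simple form, to the ordinary tensor bundle on which $x^\mu \bar u$ lives, tracking the factors of $x$ introduced by the isomorphism $T^*X \simeq x\,\zT^*X$ on each tensor factor. Once (i) is established cleanly, (ii) and (iii) reduce respectively to a boundary eigenvalue calculation on the model half-space and a straightforward algebraic substitution.
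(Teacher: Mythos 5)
The paper gives no proof of this proposition at all: it is stated explicitly as a summary of results from Chapter~7 of \cite{Lee06}, so there is no in-paper argument to compare against. Your sketch is essentially a correct reconstruction of Lee's argument. The weight-shift bookkeeping in (i) is right: writing a smooth covariant $r$-tensor as $x^r$ times a smooth section of the $0$-bundle, the indicial family of the Laplacian is quadratic in the shifted weight, so $I_\mu = I_0 + \mu(n-\mu-2r)$; one can sanity-check this against the paper's own data, e.g.\ on the pure-trace part of $\sS^2$ one has $\Delta_g(x^\mu\gbar)=(\mu+2)(n-\mu-2)x^\mu\gbar = \mu(n-\mu-4)x^\mu\gbar + 2(n-2)x^\mu\gbar$, which reproduces $\mu^{(0)}_\pm$ from the quoted \cite{GL91} list. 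Your reduction of (ii) to the claim $\sigma_{\min}=r(n+1-r)$ is algebraically correct ($(\tfrac n2-r)^2+r(n+1-r)=\tfrac{n^2}{4}+r$), and for $r=2$ it is consistent with the listed roots, since the $\sV_1$ component gives $\sigma=2(n-1)$; but note that this eigenvalue computation is the entire substance of (ii), and you defer it to \cite{Lee06} — which is exactly the citation the paper itself relies on, so nothing is lost relative to the source, though as a standalone proof that step remains to be carried out. In (iii), for the ``only if'' direction you should say one more sentence: when $c+R^2\le 0$ the indicial roots on the minimizing eigenspace are complex (or coincide) with real part $\tfrac n2 - r$, so by the paper's definition $R'=\mu_+-\tfrac n2+r=0$ and the radius fails to be positive; with that remark, and with the observation that the infimum over eigenvalues of $I_0$ is attained (continuous endomorphism over compact $M$) and the upper roots are monotone in $\sigma$, your argument for (iii) is complete.
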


We state the indicial roots of $\Delta_g + c$ computed in \cite[pp.199--202]{GL91}. 
\begin{prop}[{\cite[pp.199--202]{GL91}}]
    Let $P = \Delta_g + c$, $c\in \R$. If $r_1 = r_2 = 0$, that is, $P$ acts on the set of smooth functions $C^\infty(X)$, the indicial roots of $P$ are 
    \[\xi_\pm = \frac{1}{2}(n \pm \sqrt{n^2 + 4 c}).\]
    On the bundle of symmetric 2-tensors $\sS^2$, we consider the decomposition into trace and trace-free terms $\sS^2 = \sG \oplus \sS_0^2$. The bundle $\sS_0^2|_M$ decomposes further into 
    \[\sS_0^2|_M = \mathscr{V}_1 \oplus \mathscr{V}_2 \oplus \mathscr{V}_3,\] where
    \begin{align*}
        \mathscr{V}_1 &= \{\bar{q}_{ij} \mid \gbar^{ij} \pd_i x \,\bar{q}_{ij} = 0, \tr_{\gbar}\bar{q} = 0 \},\\
        \mathscr{V}_2 &= \{\bar{q}_{ij} \mid \bar{q}_{ij} = \lambda\big( (n+1) \pd_i x \, \pd_j x - \gbar_{ij} \big), \lambda \in \mathbb{R} \},\\
        \mathscr{V}_3 &= \{\bar{q}_{ij} \mid \bar{q}_{ij} = v_i \pd_j x + v_j \pd_i x, v \in T^*X|_M, \gbar^{ij} v_i \pd_j x = 0 \},
    \end{align*}
    and $x$ is a boundary defining function. For $1 \leq i \leq 3$, let $V_i$ denote the subspace of~$C^2(X,\sS_0^2)$ consisting of those tensors which belong to $\sV_i$ at $M$. The indicial roots $\mu^{(i)}_\pm$ corresponding to solving $I_{\mu^{(i)}}(P|_{V_i}) = 0$ are given as follows:

    \begin{alignat*}{2}
        & \mu^{(0)}_\pm  = \frac{n-4 \pm \sqrt{n^2 + 4c}}{2}, &\qquad& \mu^{(1)}_\pm = \frac{n-4 \pm \sqrt{n^2 + 4c + 8}}{2}, \\
        & \mu^{(2)}_\pm  = \frac{n-4 \pm \sqrt{n^2 + 8n + 4c + 8}}{2}, &&  \mu^{(3)}_\pm  = \frac{n-4 \pm \sqrt{n^2 + 4n + 4c + 12}}{2}.
    \end{alignat*}
\end{prop}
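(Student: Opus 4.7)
The plan is to work in a collar neighborhood of $M$ with a special boundary defining function $x$, so that $g = x^{-2}(dx^2+h_x)$ and the Christoffel symbols of $g$ can be computed explicitly in the orthonormal 0-frame $\{x\pd_x, x\pd_{y^i}\}$. By definition $I_\mu(P)$ is the restriction to $x=0$ of $x^{-\mu}P(x^\mu \bar u)$, so the task reduces, for each distinguished subspace $V \subset E|_M$, to identifying an explicit endomorphism $\lambda(\mu)$ such that $x^{-\mu}\Delta_g(x^\mu \bar u)\big|_{x=0} = \lambda(\mu)\bar u$ for $\bar u \in V$; the indicial roots are then the values of $\mu$ at which $\lambda(\mu)+c$ fails to be invertible.

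For scalar functions, extending $\bar u$ to be independent of $x$ and computing directly gives $\Delta_g(x^\mu \bar u) = \mu(n-\mu)\,x^\mu \bar u + O(x^{\mu+1})$, so $I_\mu(P) = \mu(n-\mu)+c$ and its zeros are $\xi_\pm = \tfrac{1}{2}\bigl(n \pm \sqrt{n^2+4c}\bigr)$.

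For symmetric 2-tensors I first verify that the decomposition $\sS_0^2|_M = \mathscr{V}_1 \oplus \mathscr{V}_2 \oplus \mathscr{V}_3$ is pointwise orthogonal and preserved by the leading-order part of $\Delta_g$: $\mathscr{V}_1$ is purely tangential and trace-free with no $dx$ leg, $\mathscr{V}_2$ consists of multiples of $(n+1)\,\pd_i x\,\pd_j x - \gbar_{ij}$, and $\mathscr{V}_3$ collects terms $v_i \pd_j x + v_j \pd_i x$ with $v(dx)=0$. Because the Christoffel symbols of the model metric $x^{-2}(dx^2+h_0)$ pair radial indices only with radial and tangential only with tangential, the connection Laplacian respects the radial/tangential splitting at the boundary. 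Consequently $I_\mu(\Delta_g)$ acts on each $V_i$ as multiplication by $\mu(n-\mu-4)+e_i$, where the $-4$ reflects the weight $r=2$ of symmetric 2-covariant tensors (so $I_\mu(\Delta_g) = I_0(\Delta_g) + \mu(n-\mu-2r)$) and $e_i$ is an explicit scalar, polynomial in $n$, coming from the interaction of $\nablatilde(dx/x)$ with the projector onto $\mathscr{V}_i$. Solving $\mu(n-\mu-4)+e_i+c = 0$ by the quadratic formula on each $V_i$ reproduces the stated $\mu^{(i)}_\pm$; the trace summand $\sG$ reduces under $f\gbar \mapsto f$ to a weight-shifted scalar computation and yields $\mu^{(0)}_\pm$.

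The main obstacle is the clean determination of the four numbers $e_i$, which must produce the discriminant shifts $+8$, $+8n+8$, and $+4n+12$ on $\mathscr{V}_1, \mathscr{V}_2, \mathscr{V}_3$ respectively (with a corresponding trace-case value for $\sG$). I expect the cleanest route is to compute $\nablatilde \bar q$ directly in the orthonormal 0-frame for a representative of each subspace, track which radial and tangential legs survive after a second covariant derivative, and then contract in that frame to get $\Delta_g \bar q$. Since indicial roots depend only on the leading-order metric at $M$, only the highest-weight contributions enter and the corrections coming from the $x$-dependence of $h_x$ are absorbed into the $O(x^{\mu+1})$ remainder, reducing the remaining work to a finite algebraic verification per subspace.
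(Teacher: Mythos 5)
There is a genuine gap, in two respects. First, the entire content of this proposition is the explicit list of constants (equivalently, the discriminant shifts $0$, $+8$, $+8n+8$, $+4n+12$ on $\sG,\sV_1,\sV_2,\sV_3$), and your proposal never computes them: the step ``determine the $e_i$ by a finite algebraic verification per subspace'' is precisely the proof, and it is deferred rather than carried out. (For what it is worth, the paper does not prove this statement either; it quotes the explicit zeroth-order computation of Graham--Lee, pp.~199--202, so there is no shortcut in the source being leaned on.) The scalar case and the reduction of the trace summand $\sG$ via $f\gbar\mapsto f$ are fine, and the shift $I_\mu(\Delta_g+\cK)=I_0(\Delta_g+\cK)+\mu(n-\mu-2r)$ with $r=2$ correctly explains the $n-4$ in all four roots, but none of the four zeroth-order eigenvalues that distinguish the subspaces is actually derived.

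Second, the structural argument you offer for block-diagonality is incorrect as stated. For the model metric $x^{-2}(dx^2+h)$ the Christoffel symbols do \emph{not} pair radial with radial and tangential with tangential only: one has $\Gamma^{0}_{ij}\sim x^{-1}\,h_{ij}$ and $\Gamma^{i}_{0j}\sim -x^{-1}\delta^i_j$, and since $\Delta_g$ is a $0$-operator these $x^{-1}$ terms contribute exactly at the indicial level. This radial--tangential coupling is the reason the invariant subspace $\sV_2$ is spanned by the mixed combination $(n+1)\,\pd_i x\,\pd_j x-\gbar_{ij}$ rather than by the pure $dx\otimes dx$ block, and it is the source of the $n$-dependent shifts $+8n+8$ and $+4n+12$ in $\mu^{(2)}_\pm$ and $\mu^{(3)}_\pm$; if the naive radial/tangential splitting were preserved, those shifts could not appear. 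So the invariance of $\sG\oplus\sV_1\oplus\sV_2\oplus\sV_3$ under the indicial operator cannot be read off from the symmetry you invoke; it has to be established by the same explicit zeroth-order computation that produces the $e_i$, which is the missing core of the argument.
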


\begin{ex} \label{ex: laplacian is an isom}
    We will be using the elliptic regularity of the following Laplace type operator acting on a symmetric 2-tensor $r = \tr_g(r) g + r_0$,
    \[P(r) = C_1 (\Delta_g + 2n)(\tr_g(r) g)  + C_2(\Delta_g - 2)(r_0),\]
    where $C_1, C_2$ are constants. The indicial roots of $P$ are given by
    \begin{alignat*}{2}
        & \mu^{(0)}_\pm = \frac{1}{2} (n - 4 \pm \sqrt{n^2 + 8n}), &\qquad& \mu^{(1)}_\pm = n-2, -2,\\
        & \mu^{(2)}_\pm = \frac{1}{2} (n - 4\pm \sqrt{n^2 + 8n}), && \mu^{(3)}_\pm = n-1, -3,
    \end{alignat*}
    \begin{center}
        \includegraphics[width=0.8\textwidth]{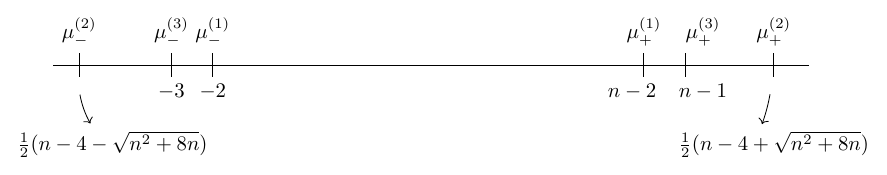}
    \end{center}

Suppose $X$ is an asymptotically hyperbolic manifold of class $C^{\ell, \zeta}$, with $\ell \geq 2$. We denote by $P|_{\sG}$ and $P|_{\sS_0^2}$ the restrictions of $P$ to the subbundles of $\sS^2$. Then by Proposition D in \cite{Lee06}, the restriction $P|_{\sG}(r) = C_1(\Delta_g + 2n)(\tr_g(r)g)$ satisfies the hypothesis of Theorem~C in \cite{Lee06}. Hence the natural extension 
\[P|_{\sG}(r): C^{k,\xi}_\delta(X; \sG) \to C^{k-2,\xi}_\delta(X; \sG)\]
is Fredholm when $k \geq 2, 0 < \delta < n$. Similarly, Proposition E and Theorem C in \cite{Lee06} imply the natural extension 
\[P|_{\sS_0^2}: C^{k,\xi}_\delta(X; \sS_0^2)  \to C^{k-2,\xi}_\delta(X; \sS_0^2) \]
is Fredholm when $k \geq 2, 0 < \delta < n$. Since the operator $P$ respects the orthogonal decomposition, 
\[P: C^{k,\xi}_\delta(X; \sS^2) \to C^{k-2,\xi}_\delta(X; \sS^2)\]
is Fredholm on symmetric 2-tensors, when $k \geq 2, 0 < \delta < n$.

Moreover, in the case where $X = B_+^{n+1}$, as defined in \S\ref{sec: conformal geometry}, let ${\prescript{0}{}{C}}^{k,\xi}_{\delta_1, \delta_2}$ denote the space of H\"older functions/sections that satisfy the Dirichlet boundary condition on the interior boundary $\pd_{\mathrm{int}} B_+^{n+1} = \{x_0^2 + x_1^2 + \cdots + x_n^2 = 1\}$. When $\delta_1 \in (0,n), \delta_2 \in [0, n-1]$ and $k \geq 2$, Proposition 3.4 in \cite{BH14} implies 
\[\Delta_g + c: {\prescript{0}{}{C}}^{k,\xi}_{\delta_1, \delta_2} (B^{n+1}_+) \to C^{k-2,\xi}_{\delta_1, \delta_2} (B^{n+1}_+)\]
is an isomorphism if $\delta_1$ also satisfies $\frac{n-\sqrt{n^2+4c}}{2} < \delta_1 < \frac{n+\sqrt{n^2+4c}}{2}$. As we have seen in Example~\ref{ex: laplacian is an isom}, the indicial roots get shifted by 2 when the operator is applied to symmetric 2-tensors. Therefore, 
\[\Delta_g + 2n: {\prescript{0}{}{C}}^{k,\xi}_{\delta_1, \delta_2} (B^{n+1}_+,\sG) \to C^{k-2,\xi}_{\delta_1, \delta_2}(B^{n+1}_+,\sG)\]
is an isomorphism when $\delta_1 \in \left(0,\frac{n-4+\sqrt{n^2+8n}}{2}\right), \delta_2 \in [0,n-1]$, and that 
\[\Delta_g-2: {\prescript{0}{}{C}}^{k,\xi}_{\delta_1, \delta_2} (B^{n+1}_+,\sS_0^2) \to C^{k-2,\xi}_{\delta_1, \delta_2}(B^{n+1}_+,\sS_0^2)\]
is an isomorphism when $\delta_1 \in \left(0,\frac{n-4+\sqrt{n^2-8}}{2}\right), \delta_2 \in [0,n-1]$. We conclude that within a sufficiently small collar neighborhood,
\[L_g: {\prescript{0}{}{C}}^{k,\xi}_{\delta_1, \delta_2} (B^{n+1}_+,\sS^2) \to C^{k-2,\xi}_{\delta_1, \delta_2}(B^{n+1}_+,\sS^2)\]
is an isomorphism when $\delta_1 \in \left(0,\frac{n-4+\sqrt{n^2-8}}{2}\right), \delta_2 \in [0,n-1]$.
\end{ex}

\subsection{Green's integral operators} \label{sec: Green's operator} 
Away from the indicial roots, the indicial operator of a Laplace-type operator is invertible on appropriate functional spaces, which allows us to construct Green's integral operators. In this section, we reformulate the results from \cite[\S6]{BH14} in terms $0$-differential operators.

Consider a manifold with boundary $X$ equipped with an asymptotically hyperbolic $g$. Suppose $E$ is a tensor bundle over $X$. Let $h$ denote the smooth metric on the boundary $M$ and let $g_0 = \frac{dx^2+h}{x^2}$. 
\begin{rmk}
    The notations used in the paper by Biquard and Herzlich \cite{BH14} can be translated as follows:
    \[x = e^{-s}, \quad x \pd_x = -\pd_s \quad \text{and} \quad \frac{dx^2 + h}{x^2} = ds^2 + e^{2s} h.\]
    Biquard and Herzlich set the dimension of $X$ to be $n$ rather than $n+1$, which results in a shift in the expression for the Laplacian and its indicial roots. 
\end{rmk}
We will consider Laplace type operators $L_{g_0}$ of hyperbolic metric $g_0$ acting on the symmetric $r$-tensors. Recall that within a collar neighborhood $M_{\leq x_0} = [0,x_0]\times M$ of the infinity boundary, the metric $g_0$ is of product type $\frac{dx^2+h}{x^2}$. 

We can trivialize a tensor bundle $E$ on $M_{\leq x_0}$ along each ray $\R_+ \times \{y\}$, where $y$ is a choice of a parallel basis for the connection $\nabla^g$. In a trivializing chart $\cU$, the derivative $\nablatilde_x$ becomes the ordinary derivative $\pd_x$ and the Laplace type operator $L_{g_0}$ is asymptotic to an operator of the form
\[-(x \pd_x)^2 + n x \pd_x + \widetilde{A} + \cK_0 + L^{\trans},\]
where $\widetilde{A}$ are the constant zeroth-order terms in $\Delta_g$ (they are the same as those of the hyperbolic space operator), $\cK_0$ contains the constant curvature terms (these are the curvature terms of hyperbolic space), and $L^{\trans}$ denotes a linear combination of the 0-differential operators $x\pd_y, (x\pd_y)^2$.

As $x \to 0$, the behavior of $L_g$ on $\cU$ is dominated by the operator
\[\sI(L_g) = -(x\pd_x)^2 + nx\pd_x+ \widetilde{A} + \cK_0.\]

The right inverses of the Laplace-type operators $-\sI(L_g)$ for a pair of real numbers~$(\alpha_-, \alpha_+)$ with $\alpha_- < \alpha_+$, given in \cite{BH14}, are constructed from the Green’s integral operators:
\begin{align} 
    G_{\infty}(u)(x) &= \frac{1}{\alpha_+ - \alpha_-} \left( x^{\alpha_-} \int_0^x -\widetilde{x}^{-\alpha_- -1} \, u(\widetilde{x}) \,d\widetilde{x} -  x^{\alpha_+} \int_0^x \widetilde{x}^{-\alpha_+-1} \, u(\widetilde{x}) \,d\widetilde{x} \right),  \label{eqn: Green's operator, G_infty}\\
    G_{0}(u)(x) &= \frac{1}{\alpha_+ - \alpha_-} \left( x^{\alpha_-} \int_0^x -\widetilde{x}^{-\alpha_- -1} \, u(\widetilde{x}) \,d\widetilde{x} -  x^{\alpha_+} \int_{x'}^x \widetilde{x}^{-\alpha_+-1} \, u(\widetilde{x}) \,d\widetilde{x} \right),  \label{eqn: Green's operator, G_0}
\end{align}
where $x'$ is a small positive real number. These integral operators have the following properties:
\begin{prop}[Green's operator on the semi-ball {\cite[p.833]{BH14}}] \label{prop: choice of G} 
    Let $G_\infty$ and $G_0$ be the integral operators defined by Equations \eqref{eqn: Green's operator, G_infty} and  \eqref{eqn: Green's operator, G_0}, respectively, on the semi-ball.
    \begin{enumerate} 
    \item If $\xi > \alpha_+$, the operator $G_\infty$ maps the weighted spaces $C^\infty_{\xi, \zeta}, 
    \Adot_{\xi, \zeta}$ and $\Adot[\xi]_\zeta$ 
    into themselves;

    \item If $\alpha_- < \xi \leq \alpha_+$, the operator $G_0$ maps $\Adot_{\xi, \zeta}
    $ into itself and maps the spaces $C^\infty_{\xi, \zeta}$ , $\Adot[\xi]_{\zeta}$ 
    into themselves only if $\xi < \alpha_+$; at the critical value $\xi = \alpha_+$, a logarithmic term $x^{\alpha_+} \log x$ appears in $G_0 (x^{\alpha_+})(x)$, so $G_0$ does not preserve the function spaces.
\end{enumerate}

The operators $G_\infty$ and $G_0$ serve as right inverses to the model operator $(x\pd_x)^2-nx\pd_x-c$ on the specified spaces mentioned above, assuming that the constant $c$ and the pair $(\alpha_-, \alpha_+)$ satisfy the condition 
\[\alpha_\pm = \frac{n \pm \sqrt{n+4c}}{2}.\] 
\end{prop}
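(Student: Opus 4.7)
The plan is to reduce the proposition to the elementary computation of how the two building-block antiderivatives
\[ I_\alpha(u)(x) = x^\alpha \int_0^x \widetilde{x}^{-\alpha-1} u(\widetilde{x})\,d\widetilde{x}, \qquad \widetilde{I}_\alpha(u)(x) = x^\alpha \int_{x'}^x \widetilde{x}^{-\alpha-1} u(\widetilde{x})\,d\widetilde{x} \]
act on monomials, $\log$-monomials, and the weighted Hölder spaces, and then assemble $G_\infty$ and $G_0$ from them.

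First I would factor the model operator. Since $\alpha_\pm = \tfrac{n\pm\sqrt{n^2+4c}}{2}$ are the roots of $\mu^2 - n\mu - c$, one has
\[(x\pd_x)^2 - n x\pd_x - c = (x\pd_x - \alpha_+)(x\pd_x - \alpha_-),\]
and the two factors commute. A direct computation shows $(x\pd_x - \alpha) I_\alpha(u) = u$ and $(x\pd_x-\alpha)\widetilde{I}_\alpha(u)=u$, so either $I_\alpha$ or $\widetilde{I}_\alpha$ provides a one-sided inverse for $x\pd_x-\alpha$, the choice being dictated by which integration constant lies in the target function space. Via the partial-fraction identity $\tfrac{1}{(\mu-\alpha_+)(\mu-\alpha_-)} = \tfrac{1}{\alpha_+-\alpha_-}\bigl(\tfrac{1}{\mu-\alpha_+} - \tfrac{1}{\mu-\alpha_-}\bigr)$, applied to the commuting operators $x\pd_x-\alpha_\pm$, the formulas \eqref{eqn: Green's operator, G_infty} and \eqref{eqn: Green's operator, G_0} are exactly
\[G_\infty = \tfrac{1}{\alpha_+-\alpha_-}(I_{\alpha_-} - I_{\alpha_+}), \qquad G_0 = \tfrac{1}{\alpha_+-\alpha_-}(I_{\alpha_-} - \widetilde{I}_{\alpha_+}),\]
which confirms that both are right inverses of the model operator on any space on which the constituent pieces are well defined.

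Next I would verify the mapping properties by testing on the building blocks $u = x^\xi(\log x)^k a(y)$, where $a(y)\in\rho^\zeta C^\infty(M)$. A straightforward computation gives
\[I_\alpha(x^\xi(\log x)^k a) = a(y)\sum_{j=0}^{k} \binom{k}{j} \frac{(-1)^{k-j}(k-j)!}{(\xi-\alpha)^{k-j+1}}\, x^\xi(\log x)^{j}, \quad \xi > \alpha,\]
while $\widetilde I_\alpha$ satisfies the same identity modulo a term proportional to $x^\alpha (\log x)^\ell$ coming from the lower endpoint $x'$. From these formulas, the claimed preservation of $C^\infty_{\xi,\zeta}$, $\Adot_{\xi,\zeta}$ and $\Adot[\xi]_\zeta$ follows immediately in case (i), because $\xi > \alpha_+ > \alpha_-$ keeps every denominator bounded away from zero and the $y$-coefficient stays in $\rho^\zeta C^\infty(M)$ (the integration being purely in $x$, it does not disturb the $\rho^\zeta$ weight). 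In case (ii), the same monomial computation applied to $G_0$ yields preservation of $\Adot_{\xi,\zeta}$ when $\alpha_-<\xi<\alpha_+$; the critical obstruction at $\xi=\alpha_+$ arises because $I_{\alpha_+}(x^{\alpha_+})$ is the divergent integral $x^{\alpha_+}\int_0^x \widetilde{x}^{-1}\,d\widetilde{x}$, which $G_0$ regularizes by replacing the lower limit with $x'$, producing the advertised $x^{\alpha_+}\log x$ term.

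The main subtlety is the behavior at the interior boundary, encoded in the second weight $\zeta$, and the verification that $y$-regularity is preserved. For the $v$-weight I would change variables from $(x,y)$ to the pair $(u,v)$ defined in Section~\ref{sec: conformal geometry}; the key observation is that the $x$-antiderivative is performed along rays of constant $y$, and because such a ray lifts to a curve on which $v$ and $u$ both vary monotonically away from the corner $S^{n-1}$, the bound $|I_\alpha(u)|\lesssim u^\xi v^\zeta$ follows from standard Hardy-type estimates once the integrand is controlled in $u^{\xi} v^\zeta C^{k,\delta}$. Hölder regularity in $y$ and in the full angular directions is preserved because differentiation in these variables commutes with integration in $x$. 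The most delicate step, which I expect to be the main obstacle, is verifying that all polyhomogeneous expansions are summed term-by-term compatibly with the smooth remainders in $\cA_{\xi,\zeta}$, ensuring no new index points are introduced beyond those predicted by the monomial calculation and the critical logarithmic correction.
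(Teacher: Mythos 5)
This proposition is quoted by the paper from \cite[p.833]{BH14} and is not proved internally, so there is no in-paper argument to compare against; your direct verification (factor the model operator as $(x\pd_x-\alpha_+)(x\pd_x-\alpha_-)$, realize $G_\infty,G_0$ as combinations of the one-sided inverses $I_{\alpha_\pm}$, $\widetilde I_{\alpha_+}$ of the first-order factors, and test on $x^z(\log x)^k a(y)$) is the standard route and is essentially the argument underlying the cited result. Two corrections, though. First, signs: since $I_\alpha$ is a right inverse of $x\pd_x-\alpha$, one computes $\bigl[(x\pd_x)^2-nx\pd_x-c\bigr](I_{\alpha_+}-I_{\alpha_-})u=(\alpha_+-\alpha_-)u$, so the right inverse of the model operator is $\tfrac{1}{\alpha_+-\alpha_-}(I_{\alpha_+}-I_{\alpha_-})$; the operator $\tfrac{1}{\alpha_+-\alpha_-}(I_{\alpha_-}-I_{\alpha_+})$ you wrote inverts $-\bigl[(x\pd_x)^2-nx\pd_x-c\bigr]$ instead. (The displayed formula \eqref{eqn: Green's operator, G_infty} itself carries a stray minus sign, and the statement's $\sqrt{n+4c}$ should be $\sqrt{n^2+4c}$, as you implicitly assumed; whichever normalization you fix must be the one used later, namely $-\sI(L_{g_0})\cG=\id$ in Proposition \ref{prop: extension of Green's operator to tensor bundles} and in the induction of Theorem \ref{main thm: phg of semi-ball}.)

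Second, the quantitative content of the proposition is precisely the part you only gesture at: the monomial calculus disposes of the polyhomogeneous coefficients, but preservation of the second weight $\zeta$ and of the H\"older remainders requires the uniform bounds on $x^{\alpha}\int_0^x\widetilde x^{-\alpha-1}u\,d\widetilde x$ (and the $\int_{x'}^x$ analogue) in the double-weighted spaces; these are exactly the integral estimates of \cite[p.846]{BH14} that the paper itself invokes later, so either cite them or carry out the Hardy-type estimate in the $(u,v)$ coordinates rather than asserting it. Relatedly, your own computation of $\widetilde I_{\alpha_+}$ produces an endpoint term proportional to $x^{\alpha_+}$ (with coefficient depending on $x'$, not a genuine $\log x$ term); for $\alpha_-<\xi<\alpha_+$ this term lies in $C^\infty_{\xi,\zeta}$ and $\Adot_{\xi,\zeta}$ but not in the pure-order space $\Adot[\xi]_\zeta$, so the claimed preservation of $\Adot[\xi]_\zeta$ by $G_0$ must be stated modulo such indicial-kernel contributions — this is not cosmetic, since these are exactly the $A(x)$ and $x^{\alpha_+}\log x$ corrections that reappear in Subcases 2.1 and 2.2 of the main induction, and a final write-up should reconcile your formula with the statement on this point.
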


Let $\lambda$ denote an eigenvalue of $\widetilde{A} + \cK_0$ acting on symmetric $r$-tensors. The associated model operator
\[L_\lambda = (x\pd_x)^2-nx\pd_x-\lambda\]
has indicial roots $\alpha_\pm = \frac{n\pm\sqrt{n^2+4\lambda}}{2}$, and the corresponding integral operators $G_\infty$ and $G_0$ serve as right inverses to $L_\lambda$ on the H\"older and polyhomogeneous spaces identified in Proposition~\ref{prop: choice of G}.

\begin{prop}[Extension of Green’s operators to tensor bundles {\cite[p.833]{BH14}}] \label{prop: extension of Green's operator to tensor bundles}
    The construction of the integral operators $G_\infty$ and $G_0$ extends to sections of a geometric tensor bundle $E$ over $M_{\leq x_0}$ by radial trivialization. Over each ray $\R_+ \times \{y\}$, $\widetilde{A}+\cK_0$ is constant. Let $E = \bigoplus_\lambda E_\lambda$ be the decomposition into eigenspaces of $\widetilde{A}+\cK_0$ and define the Green's integral operator $\cG$ as a direct sum of the integral operators $G_\infty$ and $G_0$ on each $E_\lambda$. Then, on each appropriate function space as in Proposition \ref{prop: choice of G}, the operator $\cG$ satisfies 
    \[-\sI(L_{g_0})\cG = \id.\]
\end{prop}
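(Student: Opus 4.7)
The plan is to reduce to the scalar case Proposition \ref{prop: choice of G} by splitting $E$ into subbundles on which $-\sI(L_{g_0})$ becomes the scalar model operator $L_\lambda = (x\pd_x)^2 - nx\pd_x - \lambda$ acting fiberwise.

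First I would work in the radial trivialization of $E$ over $M_{\leq x_0}$, choosing a frame of $E$ that is parallel along $x\pd_x$ with respect to the extension $\nablatilde$. Because $\nablatilde(x\pd_x) = 0$ and the frame is radially parallel, the covariant derivative $\nablatilde_{x\pd_x}$ acts on component functions as the plain $x\pd_x$. In this frame the indicial operator reads
\[
\sI(L_{g_0}) = -(x\pd_x)^2 + n(x\pd_x) + (\widetilde{A} + \cK_0),
\]
with $\widetilde{A} + \cK_0$ a fiberwise endomorphism that, by hypothesis, is constant along each ray $\R_+ \times \{y\}$. Its eigenspaces therefore assemble into a (radially parallel) bundle decomposition $E = \bigoplus_\lambda E_\lambda$ over $M_{\leq x_0}$, and on each summand the operator restricts to
\[
-\sI(L_{g_0})\big|_{E_\lambda} = L_\lambda \otimes \id_{E_\lambda}, \qquad L_\lambda = (x\pd_x)^2 - n(x\pd_x) - \lambda.
\]

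Next I would apply Proposition \ref{prop: choice of G} eigenspace-by-eigenspace with $c = \lambda$ and indicial roots $\alpha_\pm(\lambda) = \tfrac{n \pm \sqrt{n^2 + 4\lambda}}{2}$. For a prescribed weight $\xi$ (or $(\xi,\zeta)$ in the polyhomogeneous case), choose on each $E_\lambda$ the operator $G^{(\lambda)} = G_\infty$ when $\xi > \alpha_+(\lambda)$ and $G^{(\lambda)} = G_0$ when $\alpha_-(\lambda) < \xi \leq \alpha_+(\lambda)$. Setting $\cG := \bigoplus_\lambda G^{(\lambda)}$, the identity $L_\lambda G^{(\lambda)} = \id_{E_\lambda}$ from the scalar case immediately yields $-\sI(L_{g_0})\cG = \id$ summand by summand, and hence globally, since the eigenspace decomposition is preserved both by $\sI(L_{g_0})$ and by $\cG$. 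Mapping properties on the weighted Hölder and polyhomogeneous spaces transfer componentwise from Proposition \ref{prop: choice of G}, as the bundle $E$ decomposes orthogonally with respect to the chosen frame.

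The main subtlety will be the bookkeeping of function spaces across eigenvalues: for a fixed weight $\xi$, its placement relative to the pair $(\alpha_-(\lambda), \alpha_+(\lambda))$ may vary with $\lambda$, so that $G_\infty$ and $G_0$ can both appear in the direct sum. One must check that the hybrid $\cG$ still maps the target space into itself and, at critical eigenvalues where $\xi = \alpha_+(\lambda)$, that the resulting $x^{\alpha_+(\lambda)} \log x$ contribution is absorbed into $\Adot_{\xi,\zeta}$ rather than the finer spaces $\Adot[\xi]_\zeta$ or $C^\infty_{\xi,\zeta}$, consistent with clause (ii) of Proposition \ref{prop: choice of G}. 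Once this bookkeeping is verified, the proposition follows.
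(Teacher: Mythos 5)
Your proposal is correct and follows essentially the same route as the paper (and the cited construction in \cite[p.833]{BH14}): radially parallel trivialization, eigenspace decomposition of $\widetilde{A}+\cK_0$ so that $-\sI(L_{g_0})$ restricts to the scalar model $L_\lambda$ on each $E_\lambda$, and then componentwise application of Proposition \ref{prop: choice of G} with the choice of $G_\infty$ or $G_0$ dictated by the position of the weight relative to $\alpha_\pm(\lambda)$. Your remark about the weight bookkeeping across different eigenvalues is exactly the point the paper handles later in the inductive proof of Theorem \ref{main thm: phg of semi-ball} (Cases 1, 2.1, 2.2), so nothing further is needed here.
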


We state results on Laplace operators from \cite{BH14} that will be useful in showing the regularity of Lovelock metrics. 
Let $B_+^{n+1}$ be a semi-ball with asymptotically hyperbolic metric $\phi$. Consider a Laplace type operator 
\begin{equation} \label{eqn: Laplace type operator}
    L = \Delta_\phi + \cK = -(x\pd_{x})^2 + nx\pd_x + \widetilde{A} + \cK_0 + L^{\trans}
\end{equation}
acting on symmetric $r$-tensors.
\begin{lem}[{\cite[Lemma 3.5, Lemma 7.1]{BH14}}] \label{lem: BH14, Laplacian is isom}
    Suppose $L$ is invertible on $L^2$. Let $\lambda$ be the smallest eigenvalue of $\widetilde{A} + \cK$ on symmetric $r$-tensors. Suppose the double weight $(\delta_1, \delta_2)$ satisfies $0 < \delta_1 < n$, and $ 0 \leq \delta_2 \leq n-1$. Set $\mu_\pm = \frac{n}{2} \pm \sqrt{\frac{n^2}{4} + \lambda}$.

    \begin{enumerate}
        \item If $k \geq 2, \alpha \in (0,1), \lambda \geq 0$ and $\mu_- < \delta_1 < \mu_+$,
        then $L: C^{k,\alpha}_{\delta_1, \delta_2}(B_+^{n+1}) \to C^{k-2,\alpha}_{\delta_1, \delta_2}(B_+^{n+1})$ is an isomorphism.
        \item If $2 \leq k \leq l$ and $\alpha \in (0,1)$. Suppose $u$ is a section satisfying 
        \[u \in C^{k,\alpha}_{\delta_1', \delta_2}(B_+^{n+1}) \text{ and } L(u) \in C^{l-2,\alpha}_{\delta_1, \delta_2}(B_+^{n+1}) \text{ with } \mu_- \leq 0 \leq \delta_1' \leq \delta_1 < \mu_+.\]
        Then $u \in  C^{l,\alpha}_{\delta_1, \delta_2}(B_+^{n+1}) $.
    \end{enumerate}
\end{lem}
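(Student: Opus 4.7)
The plan is to follow the strategy of Biquard--Herzlich~\cite{BH14}: build a right parametrix for $L$ by combining the Green's operator $\cG$ of Proposition~\ref{prop: extension of Green's operator to tensor bundles} for the indicial model with a Neumann series that absorbs the transversal part, and then promote $L^2$ invertibility to the weighted setting by Fredholm theory.

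First I would split $L = -\sI(L) + L^{\trans}$, with $\sI(L) = -(x\pd_x)^2 + nx\pd_x + \widetilde{A}+\cK_0$ and $L^{\trans}$ a polynomial in $x\pd_y$ of degree at most two. Diagonalizing the bundle endomorphism $\widetilde{A}+\cK_0$, the indicial part reduces on each eigenspace with eigenvalue $\lambda$ to the ODE operator $(x\pd_x)^2 - n x\pd_x - \lambda$, whose indicial roots are exactly $\mu_\pm = \frac{n}{2} \pm \sqrt{\tfrac{n^2}{4}+\lambda}$. Propositions~\ref{prop: choice of G} and~\ref{prop: extension of Green's operator to tensor bundles} then assemble eigenwise Green's operators into a global $\cG$ satisfying $-\sI(L)\cG = \id$ on $C^{k,\alpha}_{\delta_1, \delta_2}(B_+^{n+1})$ under precisely the weight conditions $\mu_- < \delta_1 < \mu_+$, $0 < \delta_1 < n$, and $0 \leq \delta_2 \leq n-1$; the smallest eigenvalue $\lambda$ of $\widetilde{A}+\cK_0$ produces the tightest window, which is exactly the $(\mu_-, \mu_+)$ stated in the lemma.

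Next, treat $L^{\trans}$ as a perturbation. Since each $x\pd_y$-factor kills purely radial functions while $\cG$ acts radially, a careful inspection shows $\cG \circ L^{\trans}$ is bounded on $C^{k,\alpha}_{\delta_1, \delta_2}$ with operator norm that can be made arbitrarily small by restricting to a sufficiently thin collar $\{x \leq x_0\}$. A Neumann series then inverts $\id - \cG L^{\trans}$ on the collar; glueing with standard interior elliptic theory on the bounded domain (with Dirichlet data on $\pd_{\mathrm{int}} B_+^{n+1}$) produces a global right parametrix $\cP$ with compact remainder, so $L$ is Fredholm on $C^{k,\alpha}_{\delta_1, \delta_2}$. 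The hypothesis that $L$ is invertible on $L^2$, combined with a standard duality/Fredholm argument (the formally self-adjoint $L$ has its indicial roots symmetric about $n/2$, so both $\delta_1$ and its $L^2$-dual weight lie in the admissible window), rules out both kernel and cokernel, yielding the isomorphism of part~(i).

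For part~(ii), I would iterate the parametrix identity $u = -\cP(L u) + R u$, where $R$ is smoothing. Proposition~\ref{prop: choice of G} ensures $\cP(Lu)$ inherits the weight $\delta_1$ and regularity $l$ of $L u$ as long as each intermediate weight lies strictly between $\mu_-$ and $\mu_+$; the hypothesis $\mu_- \leq 0 \leq \delta_1' \leq \delta_1 < \mu_+$ guarantees this at every step, so finitely many iterations upgrade $u$ from $C^{k,\alpha}_{\delta_1',\delta_2}$ to $C^{l,\alpha}_{\delta_1, \delta_2}$. The main obstacle will be the uniform behavior of $\cG$ near the corner $\{x=0\} \cap \pd_{\mathrm{int}} B_+^{n+1}$, where the two weights interact; the trick of \cite{BH14} is to pass to the coordinates $(u,v)$ of \S\ref{sec: conformal geometry} that decouple the two boundaries, in which the Green's integrals become manifestly radial in $u$ with $v$-dependent coefficients, and the constraint $\delta_2 \leq n-1$ is exactly what is needed to keep the interior-boundary convolution bounded.
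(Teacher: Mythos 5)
First, note that the paper does not actually prove this lemma: it is imported verbatim from \cite{BH14} (Lemma 3.5 and Lemma 7.1 there), so there is no internal proof to compare against; the relevant comparison is with Biquard--Herzlich's own argument.

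Your sketch contains a genuine gap at its central step. You propose to invert $L$ by writing $L=-\sI(L)+L^{\trans}$, applying the Green's operator $\cG$ of Propositions~\ref{prop: choice of G} and \ref{prop: extension of Green's operator to tensor bundles}, and summing a Neumann series for $\cG L^{\trans}$, claiming its norm on $C^{k,\alpha}_{\delta_1,\delta_2}$ is small on a thin collar. This fails for two reasons. First, $L^{\trans}$ contains the tangential principal part $(x\pd_y)^2$ with $O(1)$ coefficients; measured in the weighted $0$-H\"older norms (where the norm is built from $x\pd_x$, $x\pd_y$ derivatives), it is \emph{not} small on a thin collar --- there is no spare factor of $x$ relative to the weight, unlike genuinely lower-order error terms. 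Second, and more fatally, $\cG$ is only a fiberwise ODE inverse of the indicial operator: it integrates in $x$ along each ray and gains no tangential regularity, so $\cG L^{\trans}$ maps $C^{k,\alpha}_{\delta_1,\delta_2}$ into $C^{k-2,\alpha}_{\delta_1,\delta_2}$ and each iterate loses two more tangential derivatives. The Neumann series therefore cannot even be formulated on a fixed H\"older space, and no right parametrix with compact remainder comes out of this construction; the same defect undermines your bootstrap for part (ii). In \cite{BH14} (and in \cite{GL91,Lee06}) the isomorphism is instead obtained from weighted a priori sup-estimates via barrier/maximum-principle arguments together with local Schauder estimates for the $0$-elliptic operator (equivalently, via a true parametrix modeled on the \emph{normal} operator in Mazzeo's $0$-calculus \cite{Maz91}, not the indicial operator), with the double weight $\delta_2\le n-1$ entering through the choice of barriers near the corner; the radial Green's operators $G_0,G_\infty$ are used only in the order-by-order improvement of the expansion, where the loss of tangential derivatives is compensated separately (this is exactly the role of Proposition~\ref{prop: BH14, decreasing in transverse derivatives}). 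Your duality remark is also slightly off: for symmetric $r$-tensors the indicial roots are symmetric about $\frac{n}{2}-r$, not $\frac{n}{2}$, though this is a minor point compared with the parametrix issue.
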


We conclude this section with a regularity result involving the operator algebra $\cQ$ defined \S\ref{sec: operator algebra Q}, which will be used to control the decay of transverse derivatives in the proof of the main theorem.

\begin{prop}[{{\cite[Proposition 7.2]{BH14}}}] \label{prop: BH14, decreasing in transverse derivatives}
    Suppose the double weight $(\delta_1, \delta_2)$ satisfies the assumption in Lemma \ref{lem: BH14, Laplacian is isom} (i). Suppose $k\geq 2$ and $\alpha \in (0,1)$. Let $L$ be the operator given by Equation \eqref{eqn: Laplace type operator}. Suppose the smallest upper critical weight $\mu_+$ of $L$ is greater than or equal to $1$. If~$u \in C^{k, \alpha}_{\delta_1, \delta_2}(B^{n+1}_+)$ satisfies 
    \[L(u) = \ell(u) + q(u) + f, \]
    where $\ell$ is a linear differential operator with polyhomogeneous coefficients which belongs to~$x \cQ_1$; the operator $q$ is of order at least two and has polyhomogeneous coefficients; and $f$ belongs to $\cA_{\mu_+ + \eta, \delta_2} (B^{n+1}_+)$ for some $\eta > 0$, then, for all $\delta < 1$ and all operators $Q \in \cQ$, 
    \[Q (\nablatilde_{\pd_y})^k u \in C^0_{\mu_+ + \delta, \delta_2}(B^{n+1}_+), \quad \forall k \in \mathbb{N}.\]
\end{prop}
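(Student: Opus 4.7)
The plan is to combine iterated elliptic bootstrapping with commutator control of the transverse derivatives against $L$. I would proceed by a double induction: an outer induction on the number of transverse derivatives $k$, and an inner bootstrap on decay using the invertibility of $L$ provided by Lemma~\ref{lem: BH14, Laplacian is isom}.

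First I would upgrade the decay of $u$ itself. Applying Lemma~\ref{lem: BH14, Laplacian is isom}(ii) to the equation $L(u) = \ell(u) + q(u) + f$, the assumption $\ell \in x\cQ_1$ contributes a factor of $x$, so that $\ell(u) \in C^{k-1,\alpha}_{\delta_1+1,\delta_2}$; together with $f \in \cA_{\mu_+ + \eta,\delta_2}$ and the polyhomogeneous coefficients of $q$ (which is of order at least two, so two of the available derivatives on $u$ are absorbed), the right-hand side lies in a weighted space of strictly better decay. Iterating this improvement and re-applying the regularity statement pushes the decay of $u$ up to any weight $\delta_1' < \mu_+$, while preserving $C^{k,\alpha}$ regularity.

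Next I would set $\widetilde{u}_k = (\nablatilde_{\pd_y})^k u$ and derive an equation $L\widetilde{u}_k = (\nablatilde_{\pd_y})^k(\ell u + q u + f) + [L,(\nablatilde_{\pd_y})^k]\, u$. Because the indicial operator $\sI(L) = -(x\pd_x)^2 + n\, x\pd_x + \widetilde{A} + \cK_0$ has constant coefficients in the radial trivialization, its commutator with $\nablatilde_{\pd_y}$ vanishes, so the full commutator $[L,(\nablatilde_{\pd_y})^k]$ is produced only by $L^{\trans}$ and by $y$-derivatives of the remaining coefficients of $L$. Each contribution can be rearranged as a polyhomogeneous combination of derivatives of $\widetilde{u}_j$ for $j \leq k$, with coefficients sitting naturally in $\cQ_1$ (or $x\cQ_1$ whenever a factor of $x$ is inherited from $\ell$). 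The source $(\nablatilde_{\pd_y})^k f$ remains in a polyhomogeneous space of weight at least $\mu_+ + \eta$, since differentiation in $y$ preserves the $x$-decay of each term in the expansion.

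The third step is to invoke Lemma~\ref{lem: BH14, Laplacian is isom}(i) on $\widetilde{u}_k$. Choosing an auxiliary weight $\delta_1''$ strictly below $\mu_+$, and using the inductive hypothesis on $\widetilde{u}_j$ for $j < k$ to control the commutator contributions, I obtain $\widetilde{u}_k \in C^0_{\delta_1'',\delta_2}$. A final application of $Q \in \cQ = \cQ_0 + x\cQ_1$ then supplies one additional power of $x$: elements of $\cQ_0$ have coefficients in $\Adot_{1,0}$ (vanishing to first order on $M$), and $x\cQ_1$ carries an explicit $x$ factor. Since $\delta < 1$ and $\mu_+ \geq 1$, this single gain is exactly enough to land inside $C^0_{\mu_+ + \delta,\delta_2}$. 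The principal obstacle is the bookkeeping of the iterated commutators $[L,(\nablatilde_{\pd_y})^k]$: a naive estimate loses one power of $x$ for each transverse derivative, because $\nablatilde_{\pd_y}$ is not a $0$-derivative. What saves the argument is that only the non-indicial part of $L$ contributes, and that part already carries the $x$ factors needed to compensate; keeping these cancellations inside the algebra $\cQ$ throughout the induction is the delicate technical point.
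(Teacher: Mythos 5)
The paper itself does not prove this proposition: it is imported verbatim from \cite[Proposition 7.2]{BH14}, so your attempt has to be measured against the argument given there. The decisive gap is in your third step. Lemma \ref{lem: BH14, Laplacian is isom} (both parts) can only be applied to $\widetilde u_k=(\nablatilde_{\pd_y})^k u$ if $\widetilde u_k$ is \emph{a priori} known to lie in a weighted space with admissible weight (for (ii) one needs $0\le\delta_1'\le\delta_1<\mu_+$, for (i) a weight in the invertibility range). But $\pd_y$ is not a $0$-vector field: each transverse derivative a priori costs a power of $x$, so all one knows at the outset is roughly $\widetilde u_k\in C^{0,\alpha}_{\delta_1-k,\delta_2}$, a weight that may be negative or below $\mu_-$. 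On such spaces $L$ is not injective, so from $L\widetilde u_k=\mathrm{RHS}$ with a right-hand side of weight $\delta_1''$ you cannot conclude $\widetilde u_k=L^{-1}(\mathrm{RHS})$ and hence ``$\widetilde u_k\in C^0_{\delta_1'',\delta_2}$'': contributions of indicial type $x^{\mu_-^{(i)}}a(y)$ cannot be excluded by soft arguments. Ruling these out is precisely the content of the proposition, and it is why Biquard--Herzlich argue not through the global isomorphism but through the explicit radial Green's operators $G_0$, $G_\infty$, gaining decay for the transverse derivatives in small increments along rays and tracking the kernel terms; your appeal to ``the inductive hypothesis on $\widetilde u_j$, $j<k$, to control the commutator contributions'' quietly assumes the outcome of that analysis.

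There are secondary problems as well. You read ``of order at least two'' as differential order and claim two derivatives are ``absorbed''; in \cite{BH14} (and in the way this paper later uses the same decomposition, where $q$ denotes the at-least-quadratic remainder) the decay gain comes from quadratic smallness, $q(u)=O(x^{2\delta_1})$, not from derivative counting --- a second-order \emph{linear} term would give no improvement at all. Your final step, ``applying $Q\in\cQ$ supplies one additional power of $x$'', is false on a general element of $C^0_{\delta_1'',\delta_2}$: the generator $\nablatilde_{x\pd_y}$ of $\cQ_0$ preserves, rather than improves, the weight unless one already knows that the additional transverse derivative has the same decay, i.e.\ unless the all-orders transverse estimate is already in hand; inside a correctly organized induction this is repairable, but as written it is circular with the unproven step above. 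Finally, you never track the second weight $\delta_2$: transverse differentiation acts on the $\rho^{\delta_2}$-behaviour of the coefficients of $f$ and of the polyhomogeneous coefficients of $\ell$ and $q$, while the conclusion retains $\delta_2$; this corner bookkeeping is part of the cited proof and cannot be omitted.
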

\section{Establishing the polyhomogeneous expansion} \label{sec: phg}
In this section, we prove a regularity result for asymptotically hyperbolic Lovelock metrics by adapting the method developed by Biquard and Herzlich in the context of asymptotically hyperbolic Einstein metrics \cite{BH14}. Specifically, we aim to demonstrate that such Lovelock metrics admit a polyhomogeneous expansion in a neighborhood of the conformal boundary.
\begin{thm}[Regularity of asymptotically hyperbolic Lovelock metrics]
    Let $X$ be a manifold with boundary equipped with an asymptotically hyperbolic Lovelock metric $g$, which induces a smooth conformal infinity $[h]$. Then $g$ is polyhomogeneous in a neighborhood of the boundary.
\end{thm}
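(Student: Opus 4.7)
The plan is to adapt the Biquard--Herzlich framework \cite{BH14} to the Lovelock setting, using the DeTurck-modified Lovelock tensor $Q_\alpha(g,t)$ from \S4 in place of the Einstein operator. After localizing to a collar neighborhood of a boundary point and passing to a semi-ball $B^{n+1}_+$, I fix a smooth representative $h$ of the conformal infinity $[h]$ and a reference asymptotically hyperbolic metric $t = (dx^2+h)/x^2$. The DeTurck gauge is imposed via a boundary-preserving diffeomorphism solving a first-order transport equation for the Bianchi one-form as in \cite{Alb20}, reducing the Lovelock equation $F_\alpha(g) = 0$ to $Q_\alpha(g,t) = 0$. Since polyhomogeneity is preserved under such diffeomorphisms, it suffices to work with the gauge-fixed metric.

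First I build a formal polyhomogeneous solution. Writing $g = t+r$, the linearization $D_1 Q_\alpha(t,t)$ splits under $\sS^2 = \sG\oplus\sS_0^2$ into two scalar Laplace-type operators of the form $(\Delta_t+2n)$ on $\sG$ and $(\Delta_t-2)$ on $\sS_0^2$, whose common coefficient $A_1(\alpha)$ is nonzero by the assumption $\kappa \in \LimSec(\alpha)$. The indicial roots of the resulting system, computed in Example~\ref{ex: laplacian is an isom}, include the critical upper root $n-1$ responsible for the $x^{n-1}\log x$ term in~\eqref{intro eqn: Fefferman-Graham expansion}. Starting from $g_0 = t$ and successively improving by $g_{k+1} = g_k + r_k$ with
\[
D_1 Q_\alpha(t,t)(r_k) \equiv -Q_\alpha(g_k,t) \pmod{\text{higher weight}},
\]
the eigenspace-decomposed Green's operators of Proposition~\ref{prop: extension of Green's operator to tensor bundles} supply each correction: $G_\infty$ at noncritical weights and $G_0$ at critical weights, the latter producing the logarithmic factors at precisely the orders predicted by~\eqref{intro eqn: Fefferman-Graham expansion}. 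Closure of $\Adot_{\xi,\zeta}$ under the Kulkarni--Nomizu product and the contraction formula~\eqref{eqn: contraction formula} keeps all iterates polyhomogeneous, yielding a formal polyhomogeneous expansion $g_\infty$ for $g$.

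To pass from formal to actual polyhomogeneity I follow \cite[\S7]{BH14}. Given $g_\infty$ matching $g$ to arbitrary finite order, the remainder $r = g - g_\infty$ satisfies
\[
L(r) = \ell(r) + q(r) + f,
\]
where $L$ is the leading Laplace-type operator described above, $\ell \in x\cQ_1$ is a variable-coefficient lower-order correction coming from the difference between $t$ and the hyperbolic background, $q$ collects the nonlinear terms (at least quadratic in $r$), and $f \in \cA_{\mu_++\eta,\delta_2}(B^{n+1}_+)$ for any preassigned $\eta>0$. Lemma~\ref{lem: BH14, Laplacian is isom} gives the weighted H\"older isomorphism for $L$ once the nonzero constant $A_1(\alpha)$ is absorbed, and Proposition~\ref{prop: BH14, decreasing in transverse derivatives} transfers the weighted decay to all transverse derivatives $Q(\nablatilde_{\pd_y})^k r$. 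Iterating --- raising the weight of $r$ by $\eta$ at each stage while alternating $G_\infty$ and $G_0$ --- then yields the full polyhomogeneous expansion of $r$, and hence of $g$, near $M$.

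The principal obstacle, absent in the Einstein case, is the high-degree nonlinearity of $F_\alpha$: the $q$-th Lovelock summand is polynomial of degree $q$ in $\Rm_g$, producing couplings of up to $2q$ factors of $\nablatilde^2 r$ when expanded around $t$. I would handle this by induction on the polyhomogeneous weight of $r$, checking at each step that the nonlinear remainder lies in the admissible class $x\cQ_1(r) + q(r)$ required by Proposition~\ref{prop: BH14, decreasing in transverse derivatives}; multiplicativity of $\Adot_{\xi,\zeta}$ together with the contraction formula~\eqref{eqn: contraction formula} reduces this to algebraic bookkeeping, and the nonvanishing of $A_1(\alpha)$ guarantees invertibility of the principal part on both $\sG$ and $\sS_0^2$. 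A secondary point is verifying that the Bianchi gauge $gt^{-1}B_g(t) = 0$ holds on the whole collar: this follows by propagating the contracted Bianchi identity $B_g F_\alpha(g) = 0$ radially from $x=0$ once the gauge has been initialized there.
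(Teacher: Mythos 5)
Your outline has the right ingredients (DeTurck gauge, the splitting of $D_1Q_\alpha$ into $(\Delta+2n)$ on $\sG$ and $(\Delta-2)$ on $\sS_0^2$ with the factor $A_1(\alpha)\neq 0$, Green's operators $G_\infty,G_0$, and Proposition~\ref{prop: BH14, decreasing in transverse derivatives}), but its two-stage structure contains a genuine gap. You first propose to build a formal polyhomogeneous solution $g_\infty$ ``for $g$'' to all orders by iterating Green's operators on the formal problem, and then, ``given $g_\infty$ matching $g$ to arbitrary finite order,'' to bootstrap the remainder. The second step is circular --- matching to arbitrary finite order is essentially the statement to be proven --- and the first step cannot deliver it, because the expansion of a conformally compact Lovelock metric is not locally determined beyond the critical weight: the coefficient at the indicial root $\mu_+=n-2$ (the term $h_n$ in the Fefferman--Graham expansion \eqref{intro eqn: Fefferman-Graham expansion}) is global data of the particular solution and is left free by the formal recursion, as the proof of Theorem~\ref{thm: first approximation} makes explicit. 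A priori one only knows $g-g_0\in C^{2,\gamma}_\delta$ for some small $\delta>0$, so your assertion that the source term $f$ lies in $\cA_{\mu_++\eta,\delta_2}$ for an arbitrary preassigned $\eta$ has no justification. The paper instead constructs only a finite-order approximation $\phibar$ with $F_\alpha(\phibar)\in\cA_{\nu,\zeta}$, $\nu>n-2$, and then in Theorem~\ref{main thm: phg of semi-ball} runs an induction on the weights $\sweight{k}$ in which each correction $\psi_{k+1}$ is extracted from the actual metric: from $\sI(L)(r_k)=-[Q_\alpha(\phi_k,\phibar)]_{\sweight{k+1}}-e_{k+1}$ one gets $r_k-\cG(\cdots)\in\ker\sI(L)$, so kernel terms $A_i\,x^{\mu_+^{(i)}}$ and, at critical weights, a term $A_0\,x^{\sweight{k+1}}\log x$ appear with coefficients determined by $g$ itself; proving these coefficients are smooth along the boundary and closing the induction with the transverse-derivative decay of Proposition~\ref{prop: BH14, decreasing in transverse derivatives} is the technical core that your plan never reaches.

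Two secondary points. The gauge in the paper is imposed via the Ebin slice theorem after checking invertibility of the linearized gauged operator on double-weighted H\"older spaces (Lemma~\ref{lem: choice of gauge}); your remark about propagating the contracted Bianchi identity $B_gF_\alpha(g)=0$ radially addresses the converse implication (that gauged solutions solve the Lovelock equation) and does not produce the gauge diffeomorphism. Moreover, before any weighted bootstrap one needs interior smoothness of $\phi-t$, which the paper obtains from nonlinear elliptic regularity applied to $Q_\alpha(\phi,t)=0$ (Lemma~\ref{lem: AHE implies smooth difference}); your proposal omits this step. Your point that the higher-degree nonlinearity of $F_\alpha$ is handled by algebraic bookkeeping with the contraction formula and $A_1(\alpha)\neq 0$ is correct and consistent with the paper, but it does not repair the structural gap above.
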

The polyhomogeneity of such metrics is a local problem, so we only need to verify this property within a neighborhood of an arbitrary boundary point. (See Theorem \ref{main thm: phg of semi-ball} below.) We first set up notations for the local regularity problem. Let $Y$ be a manifold diffeomorphic to the semi-ball 
\[B^{n+1}_+ = \{x_0^2 + x_1^2 + \cdots + x_n^2 < 1, \, x_0 > 0\}.\]
Its boundary at infinity and the interior boundary of $Y$ are described as follows:  
\begin{align*}  
    \pd_\infty Y &\cong \pd_\infty B^{n+1}_+ = \{x_1^2 + \cdots + x_n^2 < 1, \, x_0 = 0\}, \\  
    \pd_{\mathrm{int}} Y &\cong \pd_{\mathrm{int}} B^{n+1}_+ = \{x_1^2 + \cdots + x_n^2 = 1, \, x_0 > 0\}.  
\end{align*}  
For convenience, we denote the boundary defining function by $x = x_0$. The boundary metric on $Y$ restricts to the boundary metric on $\pd_\infty Y$, which we will denote by $h$. The simplest asymptotically hyperbolic metric on $Y$ consistent with $h$ is 
\[\phi_0 = \frac{dx^2 + h}{x^2}.\]
By appropriately modifying the coordinates if necessary, we can assume that at the origin, the metric $h$ simplifies to  
\[h = dx_1^2 + \cdots + dx_n^2.\]
Next, consider the dilation map $f_\epsilon$, defined for $\epsilon > 0$ as  
\[f_\epsilon(x_0, \ldots, x_n) = (\epsilon x_0, \ldots, \epsilon x_n).\]
Under this transformation, the rescaled metric satisfies 
\[\lim_{\epsilon \to 0} f_\epsilon^* \phi_0 = \phi_\hyp = \frac{dx_0^2 + \cdots + dx_n^2}{x_0^2}.\]

Consider an asymptotically hyperbolic Lovelock metric $\phi$ on $Y$, in fact it is enough to assume that 
\begin{equation} \label{eqn: asymptotically hyperbolic Lovelock}
    \phi - \phi_0 \in C^{2,\gamma}_\delta(X; \sS^2 T^*X), \text{ for some }  \delta >0, \text{ and } \gamma \in (0,1), \quad \text{and} \quad F_\alpha(\phi) = 0.
\end{equation} 

We first show that there is a local gauge for the action of diffeomorphisms on the metric~$f_\epsilon^* \phi$ for small enough $\epsilon$ and $\phi$ a Lovelock metric. Let 
\[B_{\phi} = \delta_{\phi} + \frac{1}{2} d \tr_{\phi} \]
be the Bianchi operator acting on symmetric two-tensors. 

\begin{lem}[Local gauge] \label{lem: choice of gauge} 
    Let $\phi$ be an asymptotically hyperbolic Lovelock metric on $Y$. Fix some auxiliary metric $t$, such that $t - \phi_0 \in C_{\delta, \delta}^{\infty}$. For $\epsilon>0$ small enough, there exists a diffeomorphism $\Psi$ that equals the identity map when restricted to the boundary at infinity~$\pd_\infty Y$ and satisfies 
    \[B_{\Psi^* f_\epsilon^* \phi} (f_\epsilon^* t) = 0.\]
\end{lem}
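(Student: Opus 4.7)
The plan is to construct $\Psi$ as the (inverse of the) time-one flow of a vector field and to solve the gauge equation by an implicit function theorem, after rewriting it in a form whose linearization is an elliptic Laplace-type operator on $1$-forms.

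First, I would exploit the naturality of the Bianchi operator under diffeomorphisms: $B_{\Psi^* g}(\Psi^* s) = \Psi^* B_g(s)$. Writing $\Phi = \Psi^{-1}$ and using $f_\epsilon^* t = \Psi^*\bigl((\Psi^{-1})^* f_\epsilon^* t\bigr)$, the gauge equation $B_{\Psi^* f_\epsilon^* \phi}(f_\epsilon^* t) = 0$ is equivalent to $B_{f_\epsilon^* \phi}(\Phi^* f_\epsilon^* t) = 0$. I parameterize $\Phi$ as the time-one flow $\exp V$ of a vector field $V$ vanishing on $\pd_\infty Y$, so that $\Psi = (\exp V)^{-1}$ automatically restricts to the identity on the conformal boundary. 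Define
\[\cN(\epsilon, V) = B_{f_\epsilon^* \phi}\bigl((\exp V)^* f_\epsilon^* t\bigr).\]
Since $\phi - \phi_0 \in C^{2,\gamma}_\delta$ and $t - \phi_0 \in C^\infty_{\delta,\delta}$, both $f_\epsilon^* \phi$ and $f_\epsilon^* t$ converge to $\phi_\hyp$ in the appropriate double-weighted H\"older norm as $\epsilon \to 0$, and the contracted Bianchi identity gives $\cN(0,0) = B_{\phi_\hyp}(\phi_\hyp) = 0$.

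Second, I would compute the linearization at $(0,0)$. From $(\exp sV)^* g = g + 2 s\, \delta^*_g V^\flat + O(s^2)$ one gets
\[D_V \cN(0,0)(V) = 2\,B_{\phi_\hyp}\,\delta^*_{\phi_\hyp}(V^\flat).\]
A Weitzenb\"ock identity identifies $2\,B_{\phi_\hyp}\delta^*_{\phi_\hyp}$ with a Laplace-type operator of the form $\Delta_{\phi_\hyp} + c$ on $1$-forms, where $c$ is determined by $\Ric_{\phi_\hyp} = -n\,\phi_\hyp$. The indicial roots on the $1$-form bundle can be read off from the machinery of \S\ref{sec: invertibility of Laplace operators}; choosing double weights compatible with those indicial roots and imposing the Dirichlet condition on $\pd_{\mathrm{int}} B^{n+1}_+$, Lemma~\ref{lem: BH14, Laplacian is isom} implies that $D_V \cN(0,0)$ is an isomorphism between the relevant weighted H\"older spaces. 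The implicit function theorem then yields, for all sufficiently small $\epsilon > 0$, a unique small solution $V_\epsilon$ of $\cN(\epsilon, V_\epsilon) = 0$, and $\Psi = (\exp V_\epsilon)^{-1}$ is the required diffeomorphism.

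The main obstacle I anticipate is the bookkeeping of function spaces: one must simultaneously enforce (i) the boundary condition $V|_{\pd_\infty Y} = 0$ so that $\Psi$ fixes the conformal boundary, (ii) the Dirichlet decay at $\pd_{\mathrm{int}} Y$ consistent with the double-weighted setup of \S\ref{sec: invertibility of Laplace operators}, and (iii) weight parameters that both lie in the admissible indicial interval for $\Delta_{\phi_\hyp} + c$ on $1$-forms and carry enough regularity for the IFT to close. The indicial computation on the $1$-form bundle is analogous to Example~\ref{ex: laplacian is an isom} but on a different tensor bundle and must be carried out directly; once it has fixed the admissible weight window, the smallness parameter $\epsilon_0$ is determined accordingly.
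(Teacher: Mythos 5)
Your proposal is correct in outline, but it takes a genuinely different route from the paper. The paper, after applying the dilation $f_\epsilon$, follows the slice-theorem packaging of \cite[\S5]{And10} and \cite[Proposition A.2]{Ozu22}: the only Lovelock-specific ingredient it verifies is that the linearization $L_\phi = D_1Q_\alpha(\cdot,t)$ on symmetric $2$-tensors is invertible on the weighted spaces (via Example \ref{ex: laplacian is an isom}), and it then invokes the Ebin slice theorem \cite{Ebi70} to produce $\Psi$. You instead build $\Psi$ directly: naturality of the Bianchi operator moves the diffeomorphism onto the argument, the diffeomorphism is parameterized by a vector field vanishing at $\pd_\infty Y$, and the implicit function theorem is applied with linearization $2B_{\phi_\hyp}\delta^*_{\phi_\hyp}$ acting on $1$-forms, which by the standard Weitzenb\"ock identity is $\nabla^*\nabla - \Ric = \nabla^*\nabla + n$ on hyperbolic space, a positive operator, so $L^2$-invertibility is automatic and the weighted isomorphism follows from the machinery of \S\ref{sec: invertibility of Laplace operators} once the indicial roots on $1$-forms are computed. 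This is the classical Graham--Lee/Biquard Bianchi-gauge argument. What your route buys: it makes explicit which operator actually controls the gauge fixing (the vector Laplacian on $1$-forms rather than the Lovelock linearization), it uses no Lovelock input at all --- only $\phi-\phi_0\in C^{2,\gamma}_\delta$, which is what makes $f_\epsilon^*\phi\to\phi_\hyp$ --- and it avoids appealing to a slice theorem whose weighted, noncompact version the paper does not spell out. What the paper's route buys: by citing \cite{Ozu22} it isolates the single new verification (ellipticity and invertibility of $D_1Q_\alpha$ at $\phi$) and reuses existing Einstein-case machinery wholesale, which is in keeping with the theme that the Lovelock case requires ``only an adjustment.'' Two items you should still nail down, consistent with the caveats you already flagged: the indicial-root computation for $\nabla^*\nabla+n$ on $1$-forms with the Dirichlet condition at $\pd_{\mathrm{int}}B^{n+1}_+$ (the $1$-form analogue of Example \ref{ex: laplacian is an isom}, resting on \cite[Proposition 3.4]{BH14}), and the fact that since $\phi$ is only $C^{2,\gamma}$ the coefficients of $B_{f_\epsilon^*\phi}$ are only $C^{1,\gamma}$, which caps the H\"older regularity of the target space in the implicit function theorem; neither threatens the argument.
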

\begin{proof}
    We assume the dilation $f_\epsilon$ is applied for sufficiently small $\epsilon$, and we will omit it in the subsequent parts of the proof. We follow the approach outlined in \cite[\S5]{And10} and \cite[Proposition A.2]{Ozu22} for Einstein metrics, and extend this argument to Lovelock metrics. The only adjustment required is to verify that the linearization of $Q_\alpha(\cdot, t)$ with respect to the first argument is invertible at the metric $\phi$. Indeed, we see from \cite[Lemma 3.4]{Alb20}, the linearization is given by:
    \begin{align*}
        L_{\phi}(r) &= \Big( D_1Q_\alpha(\cdot,t)\Big)_{(\phi,\phi)}(r) \\
        &= \frac{A_1(\alpha)}{4} \bigg[ -(n-1) (\Delta_{\phi} + 2n)(\ctr[\phi](r) \phi)  + 2 (\Delta_{\phi} - 2)(r_0) \bigg] + O(x^{N+1}).
    \end{align*}
    As discussed in Example \ref{ex: laplacian is an isom}, the Laplace term of $L_{\phi_0}$ is an isomorphism as a map 
    \[{\prescript{0}{}{C}}^{k,\xi}_{\zeta, \zeta} (Y,\sS^2) \to C^{k-2,\xi}_{\zeta, \zeta}(Y,\sS^2), \text{ for } k \leq 2, \xi \in (0,1).\]
    This, together with the assumption that $\epsilon$ is sufficiently small, guarantees the invertibility of $L_{\phi}$. Then the Ebin slice theorem \cite[Theorem 7.1]{Ebi70} provides a diffeomorphism $\Psi$ such that $B_{\Psi^*\phi}(t) = 0$.
\end{proof}

\begin{rmk} \label{rmk: gauge condition}
    To simplify the notation, we will assume that $\phi$ has been replaced by $\Psi^* f_\epsilon^*\phi$ and $t$ has been replaced by $f_\epsilon^*t$ so that 
    \begin{equation} \label{eqn: gauge condition}
        B_\phi(t) = 0
    \end{equation}
    for some auxiliary metric $t$, which we will specify in Theorem \ref{thm: first approximation}. We will refer to Equation~\eqref{eqn: gauge condition} as the \bfemph{gauge condition}.
\end{rmk}

\subsection{Nonlinear elliptic equations} 
Before stating and proving the next lemma, we recall the definition of a nonlinear elliptic system and state a regularity result for solutions of nonlinear elliptic equations. We refer the reader to \cite[\S2]{DeT81} and \cite[Appendix J]{Bes87}.

Given a system $\cF = (\cF_1, \cdots, \cF_p)$ of partial differential equations 
\begin{equation} \label{eqn: nonlinear pde}
    \cF_i(x,D^\xi u) = 0, \quad i = 1, \cdots p
\end{equation}
of order $k$, we seek a solution $u = \big(u^1(x), \cdots, u^q(x)\big)$.
We assume that the $\cF_i$ are $C^{\ell, \epsilon}$ in the~$x$ variable for some $\ell, \epsilon>0$ and $C^\infty$ in $u$ and $D^\alpha u$. We say the system \eqref{eqn: nonlinear pde} is \bfemph{elliptic} at the point $x_0$ for the solution $u_0$ if the linearizations
\[L_i (r) = \sum_{|\zeta| = k, j \leq q} \frac{\pd \cF}{\pd(D^\zeta u^j)}(x_0, D^\xi u_0) D^\zeta r, \quad i = 1, \cdots, p\]
are elliptic. The \bfemph{principal symbol} of \eqref{eqn: nonlinear pde} is the principal symbol of its linearization. 
Here is a regularity result for a nonlinear elliptic system that we will be using to prove Lovelock metrics are smooth in the gauge given by DeTurck's trick.
\begin{thm}[{\cite[Theorem 41, p.467]{Bes87}}] \label{thm: Besse}
    Assume that the system $\cF(x,s)$ is $C^\infty$ in all of its variables for $x$ in an open set $\Omega \subset \R^n$ and all $s$, and that $u = (u^1, \cdots, u^q) \in C^k(\Omega)$ is an elliptic solution of the $k$-th order equation $\cF(x, \pd^k u) = 0$. Then $u \in C^\infty(\Omega)$. 
\end{thm}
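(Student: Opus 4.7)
The plan is a bootstrap argument in Hölder spaces, using linear elliptic regularity applied to the differentiated equation. The assumption $u \in C^k(\Omega)$ means $\cF(x, \partial^k u) = 0$ holds classically. Differentiating in $x^\ell$ produces a linear system
\[
L_u(\partial_\ell u) := \sum_{|\zeta| \leq k} \frac{\partial \cF_i}{\partial s^j_\zeta}(x, \partial^k u)\, \partial^\zeta(\partial_\ell u^j) = -\frac{\partial \cF_i}{\partial x^\ell}(x, \partial^k u),
\]
whose principal part is exactly the linearization of $\cF$ at $u$, hence elliptic by hypothesis. The strategy is to iterate: each application trades regularity of the data for regularity of $u$.

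First I would promote $u \in C^k(\Omega)$ to $u \in C^{k,\alpha}_{\mathrm{loc}}(\Omega)$ for some $\alpha \in (0,1)$. Ellipticity at $u$ means the Jacobian of $\cF$ with respect to the top-order slots $s_\zeta$, $|\zeta|=k$, is invertible in the Douglis–Nirenberg sense. The implicit function theorem then locally expresses a maximal set of top-order derivatives of $u$ as $C^\infty$ functions of $x$ and the lower-order derivatives of $u$, which immediately gives Hölder regularity of the highest derivatives on compact subsets.

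Next, with $u \in C^{k,\alpha}_{\mathrm{loc}}$, the coefficients of $L_u$ and its right-hand side both lie in $C^{0,\alpha}_{\mathrm{loc}}$. Interior Schauder estimates for linear elliptic systems of order $k$ then yield $\partial_\ell u \in C^{k,\alpha}_{\mathrm{loc}}$, so $u \in C^{k+1,\alpha}_{\mathrm{loc}}$. Iterating: once $u \in C^{k+m,\alpha}_{\mathrm{loc}}$, the coefficients of $L_u$ and the term $-\partial_{x^\ell}\cF(x,\partial^k u)$ sit in $C^{m,\alpha}_{\mathrm{loc}}$, so a further Schauder estimate gives $u \in C^{k+m+1,\alpha}_{\mathrm{loc}}$. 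Sending $m \to \infty$ proves $u \in C^\infty(\Omega)$.

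The main obstacle is the initial step — upgrading raw $C^k$ regularity to $C^{k,\alpha}$ in a form compatible with Schauder. For a scalar equation this is essentially the nonlinear inversion of the top-order symbol, but for systems of order $k$ one has to arrange the Douglis–Nirenberg indices carefully so that the implicit function theorem applies cleanly to extract the needed Hölder regularity of top-order derivatives. After that, the bootstrap is mechanical, provided one checks at each stage that the linearization remains elliptic (which is automatic since the principal symbol only depends on $(x, \partial^k u)$, which varies continuously) and that the standard interior Schauder estimates for linear elliptic systems of order $k$ with Hölder coefficients apply on shrinking subdomains.
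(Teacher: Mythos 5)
The paper does not prove this statement at all: it is quoted verbatim from Besse (Appendix, Theorem 41), whose proof rests on the linear Douglis--Nirenberg/Agmon--Douglis--Nirenberg theory together with a difference-quotient bootstrap, so your attempt has to be judged on its own. Your overall scheme (differentiate the equation, then bootstrap with linear elliptic estimates) is the standard one, but the step you yourself identify as the main obstacle --- upgrading $u\in C^k$ to $u\in C^{k,\alpha}_{\mathrm{loc}}$ --- is not established by your argument. Ellipticity says that the principal symbol $\sum_{|\zeta|=k}\frac{\partial\cF_i}{\partial s^j_\zeta}(x,\partial^k u)\,\xi^\zeta$ is invertible for each $\xi\neq 0$; it does not say the (rectangular) Jacobian of $\cF$ in all top-order slots is ``invertible''. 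What it does give, via the implicit function theorem, is that one may solve for, say, the pure derivatives $\partial_n^k u^j$ as smooth functions of $x$, the remaining $k$-th order derivatives, and lower-order ones. But those remaining arguments are themselves only continuous (that is all $u\in C^k$ provides), so this identity expresses $\partial_n^k u$ as a smooth function of merely continuous data and yields continuity, not H\"older continuity. No pointwise algebraic manipulation can produce the H\"older gain; it must come from an estimate. The standard repair: for each direction $e$ the difference quotients $v^h=(u(\cdot+he)-u)/h$ lie in $C^k\subset W^{k,p}_{\mathrm{loc}}$ and satisfy a linear elliptic system whose leading coefficients $\int_0^1 \partial\cF/\partial s_\zeta\,dt$ are continuous uniformly in $h$; the interior $L^p$ estimates for such systems give uniform $W^{k,p}_{\mathrm{loc}}$ bounds, hence $u\in W^{k+1,p}_{\mathrm{loc}}$ for every finite $p$, and so $u\in C^{k,\alpha}_{\mathrm{loc}}$ by Sobolev embedding.

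Once that is in place, your Schauder bootstrap is essentially correct, with two caveats to make explicit. First, since $u$ is not yet known to be $C^{k+1}$, you cannot literally differentiate the equation; the equation for $\partial_\ell u$ must again be handled by difference quotients (their coefficients and right-hand sides are bounded in $C^{0,\alpha}$ uniformly in $h$, and uniform interior Schauder bounds plus Arzel\`a--Ascoli then give $\partial_\ell u\in C^{k,\alpha}_{\mathrm{loc}}$, i.e.\ $u\in C^{k+1,\alpha}_{\mathrm{loc}}$). Second, the interior estimates you invoke must be those valid for elliptic systems of order $k$ in the Douglis--Nirenberg sense, not just scalar second-order equations, and Schauder estimates are a priori bounds, so one needs the accompanying regularity statement (or the uniform-in-$h$ difference-quotient formulation just described) rather than applying the estimate directly to a solution not yet known to lie in the space.
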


\subsection{Regularity results}
The first local regularity result is that a Lovelock metric $\phi$ which is close to $\phi_0$ is smooth in the gauge given by DeTurck's trick. One can compare this result with Lemma 5.4 in \cite{BH14}.

More precisely, we have the following lemma. 
\begin{lem} \label{lem: AHE implies smooth difference}
    Suppose $\phi$ is an asymptotically hyperbolic Lovelock metric of class $C^2$.  Let $t$ be an auxiliary metric satisfying the gauge condition and $C^\infty_\zeta$ close to $\phi_0$ for $\zeta > 0$. Then 
    \[\phi-t \in C^\infty_\zeta(Y; \sS^2 T^*Y).\]
\end{lem}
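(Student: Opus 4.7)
The plan is to use the DeTurck trick to recast the Lovelock equation $F_\alpha(\phi)=0$ as a nonlinear elliptic equation for $r=\phi-t$ and then bootstrap the regularity of $r$ in weighted H\"older spaces. Because the gauge condition $B_\phi(t)=0$ makes $\Phi_\alpha(\phi,t)$ vanish, $F_\alpha(\phi)=0$ is equivalent to $Q_\alpha(\phi,t)=0$. Expanding around $r=0$ yields
\[
L_t(r) \;=\; -F_\alpha(t) + N(r),
\]
where $L_t = (D_1 Q_\alpha)(t,t)$ is the linearization in the first argument and $N(r)$ collects terms at least quadratic in $(r,\nabla r,\nabla^2 r)$. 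The identity $Q_\alpha(t,t)=F_\alpha(t)$ uses the contracted Bianchi identity $B_t(t)=0$, and since $t$ is $C^\infty_\zeta$-close to $\phi_0$ and $\phi_0$ is asymptotically hyperbolic with constant sectional curvature asymptotics, the source $F_\alpha(t)$ lies in $C^\infty_\zeta$.

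By the computation recalled in \S 4, the principal part of $L_t$ is
\[
\tfrac{A_1(\alpha)}{4}\Bigl[\,-(n-1)(\Delta_t+2n)\bigl(\mathrm{tr}_t(r)\,t\bigr) \;+\; 2(\Delta_t-2)(r_0)\,\Bigr]
\]
up to lower-order corrections controlled by the proximity of $t$ to $\phi_{\hyp}$. Example~\ref{ex: laplacian is an isom} combined with Lemma~\ref{lem: BH14, Laplacian is isom}(i) then shows that, after the rescaling $f_\epsilon$ with $\epsilon$ sufficiently small, $L_t$ is an isomorphism
\[
{\prescript{0}{}{C}}^{k,\gamma}_{\delta_1,\delta_2}(Y;\sS^2) \;\longrightarrow\; C^{k-2,\gamma}_{\delta_1,\delta_2}(Y;\sS^2)
\]
for a range of double weights containing $(\zeta,\zeta)$.

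The bootstrap proceeds in the standard way. Starting from $r\in C^{2,\gamma}_\delta$ for some $\delta>0$ coming from \eqref{eqn: asymptotically hyperbolic Lovelock}, we apply $L_t^{-1}$ to the equation above. The source $F_\alpha(t)\in C^\infty_\zeta$ dictates the target decay, while the quadratic structure of $N(r)$ produces a gain of weight at each step, so after finitely many iterations the decay rate saturates at $\zeta$ and the regularity of $r$ strictly increases. This yields $r\in C^\infty_\zeta$ in a neighborhood of the boundary. Interior smoothness of $\phi$ follows separately by applying Theorem~\ref{thm: Besse} to the elliptic nonlinear system $Q_\alpha(\phi,t)=0$, whose coefficients are smooth because $t$ is.

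The main obstacle is verifying that the nonlinear remainder $N(r)$ fits into the bootstrap in weighted spaces. Since $F_\alpha$ contains products of up to $q$ Riemann curvature tensors of $\phi=t+r$, the tensor $N(r)$ is a polynomial of degree up to $2q$ in the curvature and involves derivatives of $r$ of order at most two combined multiplicatively. One must check carefully that once $r\in C^{k,\gamma}_\zeta$, each monomial in $N(r)$ lies in $C^{k-2,\gamma}_\zeta$ (with the weight not degrading), and that the iteration stays inside the isomorphism window of $L_t$ given by Example~\ref{ex: laplacian is an isom}. This is the step that makes essential use of the $C^\infty_\zeta$-closeness of $t$ to $\phi_0$, together with the initial smallness of $r$ after the dilation $f_\epsilon$.
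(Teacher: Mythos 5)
Your plan diverges from the paper's proof, and the divergence is where the gap lies. The actual content of this lemma is a gain of \emph{derivatives} at the (already known) weight $\zeta$: the hypothesis \eqref{eqn: asymptotically hyperbolic Lovelock} already gives $\phi-\phi_0\in C^{2,\gamma}_\zeta$, so no weight improvement is required. Your mechanism for gaining regularity --- writing $L_t(r)=-F_\alpha(t)+N(r)$ and iterating $L_t^{-1}$ on weighted H\"older spaces --- cannot produce that gain. The remainder $N(r)$ contains second derivatives of $r$ (for $q\ge 2$ the equation is genuinely nonlinear in the second derivatives, not semilinear), so if $r\in C^{k,\gamma}_{\delta_1,\delta_2}$ then $N(r)$ is at best in $C^{k-2,\gamma}$, and $L_t^{-1}$ maps you straight back to $C^{k,\gamma}$: each pass returns exactly the differentiability you started with. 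The quadratic structure can only improve the \emph{weight}, which is not the issue here. You flag checking that the monomials of $N(r)$ land in $C^{k-2,\gamma}_\zeta$ as ``the main obstacle,'' but even granting that, the iteration is circular in the number of derivatives, so the claim ``the regularity of $r$ strictly increases'' is unfounded.

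The paper avoids this entirely: it applies the nonlinear elliptic regularity theorem (Theorem~\ref{thm: Besse}, i.e.\ Schauder theory for the equation differentiated/linearized at $\phi$ itself) directly to $Q_\alpha(\phi,t)=0$, after verifying ellipticity of the linearization $D_1Q_\alpha$ at $\phi$ exactly as in Lemma~\ref{lem: choice of gauge}; this yields smoothness of $\phi$, and the conclusion follows from the decomposition $\phi-t=(\phi-\phi_0)+(\phi_0-t)$ together with $\phi-\phi_0\in C^{2,\gamma}_\zeta$, $\phi_0\in C^\infty$, and $t-\phi_0\in C^\infty_\zeta$. In your write-up Theorem~\ref{thm: Besse} is relegated to ``interior smoothness,'' but that is precisely the step that does the work: to conclude membership in $C^\infty_\zeta$ near the boundary one needs the nonlinear Schauder estimates applied uniformly (on $0$-balls up to the boundary), not a fixed-point-style inversion of $L_t$. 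Secondary points: the heavy machinery you invoke (the isomorphism of Lemma~\ref{lem: BH14, Laplacian is isom} with Dirichlet conditions, Example~\ref{ex: laplacian is an isom}, the dilation $f_\epsilon$) belongs to the gauge lemma and to the later inductive construction of the expansion, not to this regularity statement; and the assertion $F_\alpha(t)\in C^\infty_\zeta$ is stated without justification, though since your weight-saturation step is not needed anyway, that is a minor issue compared with the missing derivative gain.
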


\begin{proof}
    The proof is similar to the regularity of Einstein metrics in modified harmonic gauge (\cite[Lemma 5.4]{BH14}). We apply regularity for solutions of nonlinear elliptic equations to the second-order equation
    \begin{equation} \label{eqn: proof Lovelock regularity g-t is smooth}
        Q_\alpha(\phi,t) = 0. 
    \end{equation}

    To determine whether $\phi$ is an elliptic solution of \eqref{eqn: proof Lovelock regularity g-t is smooth}, we consider its linearization, which is given by the linearization of $Q_\alpha(\phi,t)$ at an asymptotically hyperbolic metric $\phi$, given by
    \begin{align*}
        L_{\phi}(r)
        = \frac{A_1(\alpha)}{4} \bigg[ -(n-1) (\Delta_{\phi} + 2n)(\ctr[\phi](r) \phi)  + 2 (\Delta_{\phi} - 2)(r_0) \bigg] + O(x^{N+1}).
    \end{align*} 
    A similar argument to that in the proof of Lemma \ref{lem: choice of gauge} shows that this is an elliptic operator on $Y$. Hence, $\phi$ is an elliptic solution of \eqref{eqn: proof Lovelock regularity g-t is smooth}.

    Since Equation \eqref{eqn: proof Lovelock regularity g-t is smooth} is a polynomial in terms of $\phi,t, D^\alpha \phi, D^\beta t$, and the solution $\phi$ is $C^2$, Theorem~\ref{thm: Besse} says that $\phi \in C^\infty$. On the other hand, we have $\phi-\phi_0 \in C^{2,\gamma}_\zeta$ and $\phi_0 \in C^\infty$ as~$h$ is a smooth metric. This implies $\phi-\phi_0 \in C^\infty_\zeta$. Then 
    \[\phi - t = (\phi - \phi_0) + (\phi_0 - t) \in C^\infty_\zeta.\]
\end{proof}

\begin{thm} \label{thm: first approximation} 
    Let $Y$ be a manifold with boundary diffeomorphic to $B^{n+1}_+$. Let $x = x_0$ be the boundary defining function, $h$ be a smooth boundary metric on $\pd_\infty Y$ and $\phi_0 = \frac{dx^2+h}{x^2}$. Suppose $\phi$ is an asymptotically hyperbolic metric on $Y$ satisfying
    \[\phi - \phi_0 \in C^{2,\gamma}_\zeta(Y; \sS^2 T^*Y) \text{ for some } \zeta > 0, \gamma \in (0,1)  \quad \text{and} \quad F_\alpha(\phi) = 0,\]
    and suppose $\LimSec(\alpha) \neq \emptyset$.  
    Then there exists $\phibar$ satisfying the following: 
    \begin{enumerate}
        \item \textit{(polyhomogeneity)}. $F_\alpha(\phibar) \in \cA_{\nu, \zeta'}$ for some $\nu > \mu_+ = n-2$ and $\zeta' = \min\{\nu,\zeta\}$;
        \item \textit{(gauge)} $B_\phi(\phibar) = 0$. 
    \end{enumerate}
\end{thm}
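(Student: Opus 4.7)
The plan is to construct $\phibar$ in two stages, adapting the strategy of \cite{BH14} for Einstein metrics. First, I would extract from Albin's formal Fefferman--Graham expansion a polyhomogeneous ansatz $\phibar_0$ whose boundary asymptotics match those of $\phi$ up to sufficiently high order, automatically securing condition (i). Second, I would correct $\phibar_0$ by a perturbation of the form $\delta^*_\phi v$, where $v$ is a polyhomogeneous one-form solving a Laplace-type equation via the Green's operators of Section~\ref{sec: Green's operator}, to enforce the gauge condition (ii).

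For the first step, \cite[Theorem~1]{Alb20} produces, from the boundary conformal class $[h]$, a formal polyhomogeneous series of the form \eqref{intro eqn: Fefferman-Graham expansion} solving $F_\alpha = 0$ modulo remainders vanishing to all orders at $x = 0$. Taking a sufficiently long truncation or asymptotic summation of this series, smoothly extended into the interior, yields a polyhomogeneous symmetric two-tensor $\phibar_0$ with $F_\alpha(\phibar_0) \in \cA_{\nu, \zeta'}$ for any prescribed $\nu > n-2$. Selecting the free boundary data in the expansion to match the asymptotics of $\phi$ at $\pd_\infty Y$, and invoking Lemma~\ref{lem: AHE implies smooth difference} with $\phibar_0$ in the role of the auxiliary metric, we also obtain $\phi - \phibar_0 \in C^\infty_{\zeta}$, so that $\phibar_0$ is a polyhomogeneous metric that faithfully captures the relevant asymptotics of $\phi$.

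For the second step, the one-form $B_\phi(\phibar_0)$ is polyhomogeneous and of positive order at $\pd_\infty Y$, but is not generally zero. I would solve the linear equation
\[B_\phi \delta^*_\phi v = -B_\phi(\phibar_0),\]
and set $\phibar := \phibar_0 + \delta^*_\phi v$, which automatically satisfies $B_\phi(\phibar) = 0$. The composition $B_\phi \delta^*_\phi$ is an elliptic Laplace-type operator on one-forms with hyperbolic model near the boundary, and its indicial roots on this bundle, read off as in Section~\ref{sec: invertibility of Laplace operators}, are disjoint from the critical weight $n-2$. The integral operators $G_\infty$, $G_0$ of Propositions~\ref{prop: choice of G} and \ref{prop: extension of Green's operator to tensor bundles}, combined with the isomorphism statement of Lemma~\ref{lem: BH14, Laplacian is isom}, then provide the desired polyhomogeneous $v$ whose weight strictly exceeds $n-2$.

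The main obstacle is verifying that the gauge correction does not degrade condition (i). Writing
\[F_\alpha(\phibar_0 + \delta^*_\phi v) = F_\alpha(\phibar_0) + (DF_\alpha)_{\phibar_0}(\delta^*_\phi v) + \cR(v),\]
with $\cR(v)$ a quadratic remainder, the first term lies in $\cA_{\nu, \zeta'}$ by construction. The linear term must be controlled by treating $(DF_\alpha)_{\phibar_0}$ as a second-order operator whose coefficients belong to the polyhomogeneous algebra $\cQ$ of Section~\ref{sec: operator algebra Q}; since $\delta^*_\phi v$ has weight strictly above $n-2$, this image remains in $\cA_{\nu', \zeta'}$ for some $\nu' > n-2$, and the quadratic remainder is handled by the same structural estimates together with Proposition~\ref{prop: BH14, decreasing in transverse derivatives} to propagate transverse decay. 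A careful use of Proposition~\ref{prop: choice of G} to avoid the critical indicial weight where logarithmic terms would appear, together with iterative improvement of $\phibar_0$ if needed so that $B_\phi(\phibar_0)$ has sufficiently high order, closes the argument and yields the desired $\phibar$ satisfying both (i) and (ii).
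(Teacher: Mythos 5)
Your first stage is essentially the paper's construction: truncate Albin's formal Fefferman--Graham solution (including the $h_{n,1}x^n\log x$ term when $n$ is even, which is what lets you get past order $n-2$), and condition (i) follows because $A_1(\alpha)\neq 0$ lets the order-by-order system be solved. But your second stage, enforcing the gauge by adding a correction $\delta^*_\phi v$ with $B_\phi\delta^*_\phi v=-B_\phi(\phibar_0)$, has a genuine gap. At this point of the argument $\phi$ is only known to satisfy $\phi-\phi_0\in C^{2,\gamma}_\zeta$; its polyhomogeneity is the conclusion of the whole paper, not a hypothesis. Hence $B_\phi(\phibar_0)$ is merely a one-form of finite H\"older regularity with some weighted decay, \emph{not} polyhomogeneous, and the $v$ you produce from the Green's operators is likewise not polyhomogeneous (and loses derivatives: with $C^{1,\gamma}$ data, $\delta^*_\phi v$ is at best $C^{2,\gamma}$). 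Consequently $\phibar=\phibar_0+\delta^*_\phi v$ is no longer a finite polyhomogeneous metric, and your claim that $(DF_\alpha)_{\phibar_0}(\delta^*_\phi v)$ lies in $\cA_{\nu',\zeta'}$ because $\delta^*_\phi v$ has weight above $n-2$ conflates weighted H\"older decay with membership in the polyhomogeneous space $\cA$: the latter requires an actual expansion, which is exactly what is needed later, since the base case of the induction feeds $Q_\alpha(\phi_{-1},\phibar)=F_\alpha(\phibar)$ into Proposition~\ref{prop: BH14, decreasing in transverse derivatives}, whose hypothesis is polyhomogeneity of the forcing term. A smaller but related slip: you invoke Lemma~\ref{lem: AHE implies smooth difference} with $\phibar_0$ as auxiliary metric to get $\phi-\phibar_0\in C^\infty_\zeta$, but that lemma assumes the gauge condition $B_\phi(\phibar_0)=0$, which you have not yet arranged, so the invocation is circular (and also unnecessary for this theorem).

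The paper avoids all of this by never touching $\phibar$: the gauge condition $B_\phi(\phibar)=0$ is achieved by acting on $\phi$ rather than on $\phibar$, via the local gauge Lemma~\ref{lem: choice of gauge} and Remark~\ref{rmk: gauge condition} (Ebin slice / implicit function argument after the dilation $f_\epsilon$), replacing $\phi$ by $\Psi^*f_\epsilon^*\phi$ with $\Psi=\mathrm{id}$ on $\pd_\infty Y$. Since $F_\alpha$ is diffeomorphism invariant, the hypothesis $F_\alpha(\phi)=0$ and the class $C^{2,\gamma}_\zeta$ closeness are preserved, while $\phibar$ remains the finite polyhomogeneous truncation from stage one, so condition (i) is untouched. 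If you want to keep your "correct the approximate solution" route, you would have to solve the gauge equation with respect to a polyhomogeneous background (e.g.\ $\phibar_0$ or $\phi_0$) rather than $\phi$, and then you no longer obtain $B_\phi(\phibar)=0$, which is the condition actually needed to conclude $Q_\alpha(\phi,\phibar)=0$ in the subsequent induction; so as written the argument does not close.
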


\begin{proof}
We first construct $\phibar$ by solving for an approximate solution of the Lovelock equation: 
\begin{equation} \label{eqn: approximate Lovelock} 
    F_\alpha(\phi) = 0.
\end{equation}
Since $\LimSec(\alpha) \neq \emptyset$, by rescaling the metric, we can assume $\kappa = 1 \in \LimSec(\alpha)$. We can apply the argument in \cite[\S 2.2]{Alb20} and conclude that Equation \eqref{eqn: approximate Lovelock} yields
\begin{alignat}{2}
    \frac{1}{2}\Big( (1-k)& A_1(\alpha) + && 2(n-k) B_{1,2}(\alpha)\Big) \ctr[h] \big( h^{(k+1)} \big) \nonumber \\
    & && = \text{ terms involving lower order derivatives of } h + O(x).  \label{eqn: smooth solution of lovelock eqn}\\
    \frac
    {1}{2} (n-1-k)A_1(\alpha) \,&h^{(k+1)} +&&\frac{1}{2} \Big(A_1(\alpha) + 2(n-k) B_{1,2}(\alpha)\Big) h \ctr[h] \big( h^{(k+1)} \big) \nonumber \\
    &  &&= \text{ terms involving lower order derivatives of } h + O(x). \label{eqn: smooth solution of lovelock eqn 2} 
\end{alignat}
Here the constants $A_i(\alpha)$ and $B_{1,2}(\alpha)$ are the same as those defined in \cite[p.4, p.17]{Alb20} with $\kappa = 1$ and $\beta_q = -\frac{\alpha_q}{2q}$:
\begin{align*}
     A_1(\alpha) &= \sum_q \alpha_q \left( -\frac{1}{2} \right)^{q-1} \frac{(n-2)!}{2} \frac{(2q)!}{(n-2q)!},\\
     A_2(\alpha) &= \sum_q \alpha_q \left( -\frac{1}{2} \right)^{q-1} \frac{(n-2)!}{2} \frac{(2q)!}{(n-2q+1)!} (q-1), \\
     \lambda(\alpha) &= \sum_q \alpha_q \left( -\frac{1}{2} \right)^q \frac{n!\; (2q)!}{(n-2q+1)!},\\
     {B}_{1,2}(\alpha) &= A_2(\alpha) + \frac{1}{2n} \lambda(\alpha) = \sum_q \alpha_q \left( -\frac{1}{2} \right)^{q} \frac{(n-2)!}{2} \frac{(2q)!}{(n-2q)!}. 
\end{align*}
The assumption on $\LimSec(\alpha)$ implies that $A_1(\alpha) \neq 0$. Consequently, when $n-1-k \neq 0$, Equation \eqref{eqn: smooth solution of lovelock eqn} and the contraction of Equation \eqref{eqn: smooth solution of lovelock eqn} are linearly independent. This allows us to inductively solve for $\ctr[h] \big(h^{(k+1)}\big)|_{x=0}$ in terms of lower-order derivatives of $h$, and then use the second equation to determine $h^{(k+1)}|_{x=0}$. Thus, the boundary condition $h = x^2 \phi|_{x=0}$ determines an approximation of $\phi$ up to order $k+1 = n$, which we denote by $\widetilde{\phi}$. The approximation $\widetilde{\phi}$ satisfies the equation $F_\alpha(\phi) = O(x^{n-2})$. 

To solve the equation 
\begin{equation} \label{eqn: approximation, F of order at least n-2}
    F_\alpha(\phi) = O(x^{\nu}), \text{ with } \nu > n-2,
\end{equation}
we seek $h^{(k+1)}|_{x=0}$ for $k+1 \geq n$. 
Equation (2.7) in \cite{Alb20} says 
\begin{align} \label{eqn: k+1-th derivative of F}
    &(dx \otimes dx) \owedge \frac{1}{2} \big(A_1(\alpha) + 2n A_2(\alpha) \big) \ctr[h](h') \notag\\
    + \frac{1}{2} &\Big( (n-1)A_1(\alpha)h' + \big(A_1(\alpha) + 2n A_2(\alpha) \big) h \ctr[h](h') \Big) + \frac{1}{2}\lambda(\alpha) \ctr[h](h') \big(dx \otimes dx + h\big) \notag\\
    + x & (dx \otimes dx) \owedge \left(-\frac{1}{2} \big(A_1(\alpha) + 2A_2(\alpha) \big) \ctr[h](h'') + 2 A_2(\alpha) \ctr[h]^2(\Rm_h) \right)\\
    +x &A_1(\alpha) \cS\Big((dx \otimes 1) \owedge \ctr[h](Dh')\Big) \notag\\
    +x & \left(-\frac{1}{2}\right) \Big( A_1(\alpha)h'' + 2A_2(\alpha) h \ctr[h](h'')\Big) + x \Big( A_1(\alpha) \ctr[h](\Rm_h)+ 2A_2(\alpha) h \ctr[h]^2(\Rm_h)\Big) \notag\\
    +x &\Big( - \frac{1}{2n}\lambda(\alpha) \ctr[h](h'') + A_3(\alpha) \ctr[h]^2(\Rm_h) \Big) \big(dx \otimes dx + h\big)  \hspace{4cm} = O(x^3), \notag
\end{align}
where the constant $A_3(\alpha)$ is given by
\begin{align*}
    A_3(\alpha) &= \sum_q \alpha_q \left( -\frac{1}{2} \right)^q \frac{(n-2)!\; (2q-1)!\; (nq-1)}{(n-2q+1)!}.
\end{align*}
After multiplying by $x$, the on-diagonal terms on the left-hand side preserve the parity in $x$. This preservation extends to higher-order derivatives of the left-hand side with respect to $x$ and, consequently, carries over to Equations \eqref{eqn: smooth solution of lovelock eqn} and \eqref{eqn: smooth solution of lovelock eqn 2}. In the case where $n$ is odd, this implies that the right-hand side of Equation \eqref{eqn: smooth solution of lovelock eqn} vanishes. Thus, $\ctr[h](h^{(n)})|_{x=0} = 0$, but no constraint is imposed on the trace-free part of the expression. When $n$ is even, however, the right-hand side of Equation~\eqref{eqn: smooth solution of lovelock eqn 2} may have a non-vanishing trace-free part, whereas the left-hand side is purely trace. This is because the first term,
\[\frac{1}{2}(n-1-k) \, A_1(\alpha)\, h^{(k+1)}\]
vanishes when $k=n-1$.

To resolve this issue, we follow \cite[p.5]{Gra00} and introduce a logarithmic term $h_{n,1} x^n \log x$ in the expansion for $h$, namely,
\begin{equation} \label{eqn: modified expansion of h}
    h = h_0 + h_2 x^2 + \cdots + h_{n-2} x^{n-2} + h_{n,1} x^n \log x + h_n x^n + \cdots.
\end{equation}
Analogous to the derivation of Equations \eqref{eqn: smooth solution of lovelock eqn} and \eqref{eqn: smooth solution of lovelock eqn 2}, we examine Equation~\eqref{eqn: k+1-th derivative of F} by isolating the coefficients of $dx \otimes dx$ and the terms independent of $dx$. These coefficients are given explicitly as follows.
The coefficient of $dx \otimes dx$ is
\begin{align}
    I &= \frac{1}{2} \Big( A_1(\alpha) + 2n A_2(\alpha) \Big) \ctr[h](h') + \frac{1}{2}\lambda(\alpha) \ctr[h](h') - x A_2(\alpha) \ctr[h](h'') - x \frac{1}{2n} \lambda(\alpha) \ctr[h](h'') \notag\\
    &= \frac{1}{2} A_1(\alpha) \ctr[h](h') + B_{1,2}(\alpha) \ctr[h]\big( nh'-xh'' \big), \label{eqn: F_g dx^2}
\end{align}
while the coefficient of the terms without $dx$ is
\begin{align}
    II =&\; \frac{1}{2} (n-1) A_1(\alpha) h' + \frac{1}{2} \Big( A_1(\alpha) + 2n A_2(\alpha) \Big) h \ctr[h](h') +  \frac{1}{2}\lambda(\alpha) h \ctr[h](h') \notag\\
    &- \frac{1}{2} x \Big( A_1(\alpha)h'' + 2 A_2(\alpha)h \ctr[h](h'') \Big) - x \frac{1}{2n} \lambda(\alpha) h \ctr[h](h'') \notag\\
    =&\; \frac{1}{2} A_1(\alpha) \Big( (n-1)h'-xh'' \Big) + \frac{1}{2} A_1(\alpha) h\ctr[h](h') + B_{1,2}(\alpha) h\ctr[h]\big( nh'-xh'' \big). \label{eqn: F_g without dx}
\end{align}
Differentiating Equations \eqref{eqn: F_g dx^2} and~\eqref{eqn: F_g without dx} $(n-1)$ times with respect to $x$, we obtain the analogues of Equations \eqref{eqn: smooth solution of lovelock eqn} and \eqref{eqn: smooth solution of lovelock eqn 2} under the modified expansion \eqref{eqn: modified expansion of h}. In this case, the derivative $\pd_x^{n-1} II$ has a non-vanishing trace-free part given by
\begin{align*}
    &\; \pd_x^{n-1} \bigg( \frac{1}{2} A_1(\alpha) \Big( (n-1)h'-xh'' \Big)\bigg) \\
    =&\; \frac{1}{2} A_1(\alpha) \pd_x^{n-1} \bigg((n-1) h_{n,1} (n \log x + 1) x^{n-1} + n h_n x^{n-1} \\
    & \hspace{14ex}- x \Big( n(n-1)h_{n,1} \big(\log x + (2n-1)\big) + n(n-1) h_n \Big)x^{n-2} \bigg) + O(x)\\
    =&\; -\frac{n!}{2} A_1(\alpha) \, h_{n,1} + O(x) = \text{ terms involving } \pd_x^s h \text{ with } s < n.
\end{align*}

This procedure allows us to solve for $h_{n,1}$ in terms of $\pd_x^k \big|_{x=0} h$, where $k < n$. The introduction of the logarithmic term resolves the degeneracy at order $k = n-1$ in Equation~\eqref{eqn: k+1-th derivative of F}, thus allowing the equation to be satisfied without imposing additional constraints on $h_n$. This gives us the freedom to choose $h_n$. 

For $k+1>n$, the iteration proceeds as before by setting $h_n = 0$ and differentiating Equation \eqref{eqn: k+1-th derivative of F} $k$ times. In constructing the first approximation, it suffices to truncate the expansion at $k=n$, at which the on-diagonal components of $F_\alpha(g)$ are of order $O(x^{\nu})$ for some $\nu>n-2$. Moreover, by Lemma 2.1 in \cite{Alb20}, the off-diagonal components $F_\alpha(g)$ are of order $O(x^{n-1})$.

Thus, regardless of whether $n$ is even or odd, we can construct an approximation $\phibar$ satisfying the following conditions:
\begin{enumerate}
    \item $\phibar$ agrees with $\widetilde{\phi}$ up to order $n-2$.
    \item if $n$ is odd $\phibar^{(n)}|_{x=0}$ is trace free. 
    \item $\phibar$ does not include any terms of order $k \geq n$.
\end{enumerate}
 By construction, this approximation satisfies $F_\alpha(\phibar) = O(x^\nu)$, for some $\nu > n-2$. Therefore, we have $F_\alpha(\phibar) \in \cA_{\nu,\nu} \subset \cA_{\nu,\zeta'}$ for $\zeta' = \min\{\nu, \zeta\}$. 

If necessary, we apply Lemma \ref{lem: choice of gauge} and Remark \ref{rmk: gauge condition} to ensure that the gauge condition is satisfied.
\end{proof}

\begin{rmk}
    For simplicity, we will assume $\zeta \leq \nu$, that is, $\zeta = \zeta' = \min\{\nu, \zeta\}$.
\end{rmk}

\begin{defn}
    Let $L$ be the linearization of the modified Lovelock tensor $Q_\alpha(\cdot,\phibar)$ at the metric $\phi_0$ with respect to the first argument. We define $\mathbb{N}_L$ to be the monoid generated by the element $1$ and the upper indicial roots $\mu_+^{(i)}$ of $L$. An \bfemph{$L$-polyhomogeneous} expansion is a polyhomogeneous expansion of the form
    \[\sum_{k}\sum_{z \in \mathbb{N}_L} a_{z,k}(y) \,x^z\,(\log x)^k \]
    where $z \in \mathbb{N}_L$ and with no additional requirements on $k$ beyond those imposed by polyhomogeneity.
\end{defn}

\begin{thm} \label{main thm: phg of semi-ball} 
    In the same setting as Lemma \ref{lem: AHE implies smooth difference}, let $L$ be the linearization of the modified Lovelock tensor $Q_\alpha(\cdot,\phibar)$ at the metric $\phi_0$ with respect to the first argument. We take the sequence of weights $\{\sweight{k}\}$ where 
    \[a_0 = 0 \text{ and } \sweight{k+1} = \min \{ \lambda \in \mathbb{N}_L: \lambda > \sweight{k} \}.\] 
    There exists a sequence $\{\phi_k\}$ of $L$-polyhomogeneous metrics approximating $\phi$ that satisfies:  
    \begin{enumerate}
        \item \textit{(approximation)}. $\phi_k = \phibar + \psi_0 + \cdots + \psi_k$ with $\psi_j \in \Adot[\mu_+ + a_j]_\zeta(Y)$ for all $j \in \mathbb{N}$; 
        \item \textit{(polyhomogeneity)}. $Q_\alpha(\phi_k, \phibar) \in \cA_{\mu_+ + a_{k+1}, \zeta} (Y)$ for all $k \in \mathbb{N}$;
        \item \textit{(decreasing in transverse derivatives)}. Let $r_k = \phi - \phi_k$, for all $k, p \in \mathbb{N}$, there exists some $\delta > 0$ such that 
        \[\big( \nablatt \big)^p(r_k) \in C^0_{\sweight{k} + \delta, \zeta}(Y).\]
    \end{enumerate}
\end{thm}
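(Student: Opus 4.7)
The plan is to construct the approximating sequence iteratively, following the Einstein-case scheme of Biquard and Herzlich \cite[Theorem 7.3]{BH14}. The two analytic ingredients are the Green's operator $\cG$ for the model operator $\sI(L)$ supplied by Propositions \ref{prop: choice of G}--\ref{prop: extension of Green's operator to tensor bundles}, which inverts $\sI(L)$ on the weighted polyhomogeneous spaces identified there, and the transverse-derivative estimate of Proposition \ref{prop: BH14, decreasing in transverse derivatives}. Throughout I work in the DeTurck gauge fixed by Lemma \ref{lem: choice of gauge} and Remark \ref{rmk: gauge condition}, so that $F_\alpha(\phi) = 0$ becomes the elliptic equation $Q_\alpha(\phi, \phibar) = 0$.

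To start the induction I take the first approximation $\phibar$ of Theorem \ref{thm: first approximation}, for which $Q_\alpha(\phibar, \phibar) = F_\alpha(\phibar) \in \cA_{\nu, \zeta}$ with $\nu > \mu_+ = n - 2$; property (ii) then holds at $k = 0$, with $\psi_0$ chosen to capture the free $h_n$-type data at weight $\mu_+$. For the inductive step, assume (i)--(iii) at level $k$ and expand
\begin{equation*}
Q_\alpha(\phi_k + \psi, \phibar) = Q_\alpha(\phi_k, \phibar) + L(\psi) + R_k(\psi),
\end{equation*}
where $L$ is the linearization of $Q_\alpha(\cdot, \phibar)$ at the reference metric $(dx^2+h)/x^2$ and $R_k$ combines the discrepancy between the linearization at $\phi_k$ and at the reference metric (which carries an extra factor of $x$ since $\phi_k - (dx^2+h)/x^2 = O(x^{\mu_+})$ with $\mu_+ \geq 2$) with the genuinely nonlinear terms in $\psi$. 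By hypothesis (ii), the leading coefficient $f_{k+1} = [Q_\alpha(\phi_k, \phibar)]_{\sweight{k+1}} \in \Adot[\sweight{k+1}]_\zeta$ is well defined, and I set
\begin{equation*}
\psi_{k+1} = -\cG(f_{k+1}),
\end{equation*}
which by Proposition \ref{prop: choice of G} lies in $\Adot[\sweight{k+1}]_\zeta$, with at worst an additional $\log x$ factor when $\sweight{k+1}$ is a critical weight of $G_0$. Because $Q_\alpha$ is polynomial in the metric, its inverse, and its derivatives up to order two, substituting $\phi_{k+1} = \phi_k + \psi_{k+1}$ preserves $L$-polyhomogeneity, and the leading singular contribution of $Q_\alpha(\phi_{k+1}, \phibar)$ is cancelled by construction, giving $Q_\alpha(\phi_{k+1}, \phibar) \in \cA_{\sweight{k+2}, \zeta}$.

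Property (iii) follows by applying Proposition \ref{prop: BH14, decreasing in transverse derivatives} to the equation for the remainder $r_k = \phi - \phi_k$. Subtracting $Q_\alpha(\phi_k, \phibar)$ from $Q_\alpha(\phi, \phibar) = 0$ and Taylor-expanding in the first slot yields
\begin{equation*}
L(r_k) = \ell(r_k) + q(r_k) - Q_\alpha(\phi_k, \phibar),
\end{equation*}
with $\ell$ the linearization discrepancy at $\phi_k$ versus the reference metric, lying in $x\cQ_1$ by (i); $q$ a second-order operator with polyhomogeneous coefficients; and source in $\cA_{\sweight{k+1}, \zeta}$ by (ii). Proposition \ref{prop: BH14, decreasing in transverse derivatives} then delivers the decay $Q(\nablatt)^p r_k \in C^0_{\sweight{k} + \delta, \zeta}$ for every $Q \in \cQ$, every $p \in \mathbb{N}$, and some $\delta > 0$.

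The most delicate point is the bookkeeping of the $L$-polyhomogeneous index set when the weights $\sweight{k+1}$ meet critical values of $\cG$: in that regime $G_0$ introduces a $\log x$ factor by Proposition \ref{prop: choice of G}(ii), and one must verify that the nonlinear operator $Q_\alpha$ preserves the expanded polyhomogeneous structure. This reduces to a closure statement for $\cA$ under pointwise multiplication and under the action of $0$-differential operators with polyhomogeneous coefficients, so that iterated substitution of $L$-polyhomogeneous metrics into $Q_\alpha$ remains $L$-polyhomogeneous, with at most an increase in the exponent of $\log x$ at a fixed $x$-weight.
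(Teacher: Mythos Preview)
Your inductive scheme and the verification of (ii) are essentially the paper's, but the argument for (iii) has a genuine gap. Proposition~\ref{prop: BH14, decreasing in transverse derivatives} does not give what you claim: its conclusion is $Q(\nablatt)^p u \in C^0_{\mu_+ + \delta,\zeta}$ for every $\delta<1$, \emph{independent of how fast the source $f$ decays}. This is unavoidable, since $\sI(L)$ has nontrivial kernel at the upper indicial roots $\mu_+^{(i)}$; even if $L(r_k)$ lies in $\cA_{\sweight{k+1},\zeta}$, the remainder $r_k$ itself may contain terms $A_i\,x^{\mu_+^{(i)}}$ that are invisible to the equation. So the proposition supplies only the base case and cannot be iterated to reach $C^0_{\sweight{k}+\delta,\zeta}$.

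The paper closes this gap by working instead with the model equation $\sI(L)(r_k) = -[Q_\alpha(\phi_k,\phibar)]_{\sweight{k+1}} - e_{k+1}$ and inverting it explicitly: one writes $r_k = \cG\big([Q_\alpha(\phi_k,\phibar)]_{\sweight{k+1}}\big) + \cG(e_{k+1}) + A(x)$, where $A(x)=\sum_{i\in\cI}A_i\,x^{\mu_+^{(i)}}$ collects the kernel contributions (and, at a critical weight $\sweight{k+1}=\mu_+^{(i_0)}$, an extra $A_0\,x^{\sweight{k+1}}\log x$). The first summand is $\psi_{k+1}$, so $r_{k+1}=\cG(e_{k+1})+A(x)$, and assertion (iii) at level $k+1$ is then proved term by term: the transverse derivatives commute with $\cG$ (the integral bounds for $G_\infty$ and $G_0$), reducing $(\nablatt)^p\cG(e_{k+1})$ to $(\nablatt)^p e_{k+1}$, which is controlled by the inductive hypothesis on $r_k$; and the coefficients $A_i$ are shown to be smooth in the transverse variables by restricting the identity $A_i = x^{-\mu_+^{(i)}}\big(r_k-\psi_{k+1}-G_0(e_{k+1})\big)$ to a fixed slice $x=x_0$. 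This case analysis---which Green's operator to use on each eigenspace, whether kernel terms appear, and whether $G_0$ produces a log---is the substance of the inductive step and is missing from your proposal.
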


The above theorem implies the regularity of a manifold with boundary diffeomorphic to a semi-ball since the Lovelock tensor and the gauge condition are diffeomorphism invariant. More precisely, we have the following corollary.
\begin{coro}\label{thm: phg of semi-ball}
    Let $X$ be a manifold with boundary. Let $M$ be the boundary defined by the boundary defining function $x$, equipped with a smooth metric $h$. Let $g_0 = \frac{dx^2 + h}{x^2}$ on the collar neighborhood $M_{\leq x_0}$ and suppose $g$ is an asymptotically hyperbolic Lovelock metric on $X$, that is,
    \[g - g_0 \in C^{2,\gamma}_\zeta(M_{\leq x_0}) \text{ for some } \zeta > 0, \gamma \in (0,1) \text{ and } F_\alpha(g) = 0.\] 
    Suppose $\LimSec(\alpha) \neq \emptyset$. Then for each point on $M$, there is a neighborhood $Y \subset X$, a diffeomorphism $\Phi: \overline{Y} \to B^{n+1}_+$ which is the identity on $\pd_\infty Y$ and a sequence $\{g_k\}$ of asymptotically hyperbolic metrics on $Y$ with finite polyhomogeneous expansion and a pair of positive numbers $(\delta, \eta)$ such that 
    \[\Phi^*g - g_k \in C^\infty_{\sweight{k} + \delta, \eta} (Y), \forall k \in \mathbb{N},\]
    where $a_k$ are defined in Theorem \ref{main thm: phg of semi-ball}. Moreover, the transverse derivatives satisfy the same estimates as in Theorem \ref{main thm: phg of semi-ball}.
\end{coro}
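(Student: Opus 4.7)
The plan is to reduce the corollary to the semi-ball version, Theorem~\ref{main thm: phg of semi-ball}, by a standard localization argument near an arbitrary boundary point. Polyhomogeneity and membership in weighted Hölder spaces are both local properties near $M$, so it suffices to work in an arbitrarily small collar neighborhood of each point $p \in M$.

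First I would choose, for a given $p \in M$, a neighborhood $Y \subset X$ together with a boundary-preserving diffeomorphism $\Phi: \overline{Y} \to B^{n+1}_+$ that restricts to the identity on $\pd_\infty Y \subset M$. Such a $\Phi$ can be constructed by picking boundary coordinates on $M$ centered at $p$ and extending into the collar using the flow of $\pd_x$, since both $Y$ and $B^{n+1}_+$ are equipped with product decompositions $[0,x_0] \times M_p$ near the boundary at infinity. Under this identification the pushforward $(\Phi^{-1})^* g_0$ agrees with $\phi_0 = \frac{dx^2 + h}{x^2}$ on $B^{n+1}_+$, and the hypothesis $g - g_0 \in C^{2,\gamma}_\zeta$ transfers to $(\Phi^{-1})^* g - \phi_0 \in C^{2,\gamma}_\zeta(B^{n+1}_+;\sS^2T^*B^{n+1}_+)$, because smooth diffeomorphisms of the semi-ball that preserve the boundary defining function preserve the weighted Hölder classes. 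Moreover, diffeomorphism invariance of the Lovelock tensor gives $F_\alpha((\Phi^{-1})^* g) = (\Phi^{-1})^* F_\alpha(g) = 0$, so the pullback metric satisfies the hypotheses of Theorem~\ref{main thm: phg of semi-ball}.

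I would then apply Theorem~\ref{main thm: phg of semi-ball} to the metric $\phi := (\Phi^{-1})^* g$ on $B^{n+1}_+$, obtaining the sequence $\{\phi_k\}$ of $L$-polyhomogeneous metrics together with some $\delta>0$ and $\eta = \zeta$ such that $\phi - \phi_k \in C^\infty_{\sweight{k} + \delta, \eta}(B^{n+1}_+)$ and the transverse derivative estimates of part~(iii) hold. Pulling the sequence back to $Y$ by setting $g_k := \Phi^* \phi_k$ produces a sequence of asymptotically hyperbolic metrics on $Y$ with finite polyhomogeneous expansions. Because $\Phi$ preserves both the boundary defining function structure and the parallel transport along $x\pd_x$ used to define the polyhomogeneous coefficients, the asymptotic statement and the transverse derivative bound transfer back to $Y$ verbatim, giving the desired conclusion.

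The only subtle point is that Theorem~\ref{main thm: phg of semi-ball} is stated under the gauge condition $B_\phi(\phibar)=0$, which is achieved via Lemma~\ref{lem: choice of gauge} after applying a dilation $f_\epsilon$ for sufficiently small $\epsilon$ and composing with an Ebin-slice diffeomorphism. Both operations are diffeomorphisms of $B^{n+1}_+$ that are the identity on $\pd_\infty B^{n+1}_+$, so they may be absorbed into the diffeomorphism $\Phi$ appearing in the statement. No step presents a serious obstacle; once the semi-ball theorem is in hand, the corollary is essentially bookkeeping, verifying that the diffeomorphism invariance of $F_\alpha$, the gauge condition, the weighted Hölder spaces, and the polyhomogeneous structure all transport cleanly under $\Phi$.
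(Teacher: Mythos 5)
Your proposal is correct and matches the paper's argument: the paper treats this corollary as an immediate consequence of Theorem~\ref{main thm: phg of semi-ball}, obtained by localizing at a boundary point and using the diffeomorphism invariance of the Lovelock tensor and of the gauge condition, with the dilation and Ebin-slice diffeomorphism from Lemma~\ref{lem: choice of gauge} absorbed into $\Phi$. Your write-up simply makes this bookkeeping explicit.
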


\subsection{Inductive argument}
In this section, we prove Theorem \ref{main thm: phg of semi-ball} by controlling the behavior of the polyhomogeneous expansion through the sequence of weights $a_k$, which goes to infinity.

We use an inductive argument on $k$ to prove the theorem. The idea hinges on a decomposition of  $Q_\alpha(\phi, \phibar)$ that separates the linear part of the modified Lovelock tensor from the higher-order terms, making the term involving the Laplace-type operator more apparent. Since $\phi$ is a Lovelock metric and $\phibar$ satisfies the gauge condition, $Q_\alpha(\phi, \phibar) = 0$. The decomposition of $Q_\alpha(\phi, \phibar)$ at a symmetric 2-tensor $\phi_k$ that approximates $\phi$ is given by
\begin{align} \label{eqn: Taylor expansion of Q}
    \begin{split}
        0 &= Q_\alpha(\phi, \phibar)\\
        &= Q_\alpha(\phi_k, \phibar) + D_1Q_\alpha(\phi_k, \phibar)(\phi_k - \phi) + q(\phi_k - \phi) \\
        &= Q_\alpha(\phi_k, \phibar) + L(\phi_k - \phi) + (D_1Q_\alpha(\phi_k, \phibar)-L)(\phi_k - \phi) + q(\phi_k - \phi), 
    \end{split}
\end{align}
where $q$ denotes the terms that are at least quadratic.

\begin{proof}[Proof of Theorem \ref{main thm: phg of semi-ball}] We denote by $x = x_0$ the boundary defining function of the boundary at infinity $M$ of $B^{n+1}_+$. Within a collar neighborhood of $M$, the asymptotically hyperbolic Lovelock metric $\phi$ satisfies 
\[\phi - \phi_0 \in C^{2,\gamma}_\zeta \text{ for some } \zeta > 0, \gamma \in (0,1) \text{ and } F_\alpha(\phi) = 0.\]
Note that $C^{2,\gamma}_\zeta \subset C^{2,\gamma}_{\zeta,\zeta}$, so $\phi - \phi_0 \in C^{2,\gamma}_{\zeta,\zeta}$. We will fix $\zeta$ in the proof.

\newpage
\textbf{Base case $k=-1$}. \emph{Assertion (i) and (ii)}. 

The first approximation is provided by the metric $\phibar$ constructed in Theorem \ref{thm: first approximation}. Given the fact that $a_0 = 0$, the strict inequality $\nu > \mu_+$ ensures the existence of some $\delta > 0$ such that $\nu = \mu_+ + a_0 + \delta = \mu_+ + \delta > \mu_+$.
Consequently,
\[Q_\alpha(\phi_{-1}, \overline{\phi}) = Q_\alpha(\overline{\phi}, \overline{\phi}) = F_\alpha(\overline{\phi}) \in \mathcal{A}_{\mu_+ + a_0 + \delta, \zeta}, \quad \delta > 0.\]  

\emph{Assertion (iii)}. It remains to check the decay in transverse derivatives using Proposition~\ref{prop: BH14, decreasing in transverse derivatives}. To apply the proposition, we check $r_{-1} = \phi - \phibar$ belongs to the appropriate H\"older space. 

By construction $r_{-1} = O(x^\nu)$ for $\nu > \mu_+$. Thus there exists some $\delta > 0$ such that $r_{-1}~\in~C^{\infty}_{\mu_+ + a_{-1} + \delta, \zeta}$. Then the decomposition of $Q_\alpha(\phi,\phibar)$ yields
\[L(r_{-1}) = -Q_\alpha(\phi, \phibar) - (D_1Q_\alpha(\phibar, \phibar)-L)(r_{-1}) - q_{-1}(r_{-1}).\]
The operator $\ell = D_1Q_\alpha(\phibar, \phibar)-L$ is in $x \cQ_1$ since $\phibar$ and $\phi$ are both asymptotically hyperbolic and the leading-order terms of the Laplacians $\Delta_\phibar$ and $\Delta_{\phi_0}$ agree to leading order. So Proposition \ref{prop: BH14, decreasing in transverse derivatives} implies 
\[ Q(\nablatt)^p (r_{-1}) \in C^0_{\mu_+ - 1 + \delta, \zeta}, \quad \text{for all } Q \in \cQ.\] 

We set $\phi_{-1} = \phibar$, $\psi_{-1} = 0$, $r_{-1} = \phi - \phibar$, $a_{-1} = -1$, and $a_0 = 0$.

\textbf{Induction}.
Suppose assertions (i)--(iii) hold for $k$. We will construct the next approximating metric $\phi_{k+1}$ using the Green's integral operators constructed in \S\ref{sec: Green's operator}. The expansion of the modified Lovelock tensor at $\phi_k$ is given by
\begin{equation} \label{eqn: decomposition of Q}
    0 = Q_\alpha(\phi, \phibar) = Q_\alpha(\phi_k, \phibar) + L(r_k) + (D_1Q_\alpha(\phi_k, \phibar)-L)(r_k) + q_k(r_k),
\end{equation}
where $r_k = \phi - \phi_k$. The construction of $\phi_{k+1}$ reduces to determining the $(k+1)$-st correction term $\psi_{k+1} = \phi_{k+1} - \phi_k$. After isolating the indicial operator from the decomposition, we can apply its inverse, represented by Green's integral operators, to construct the correction term.

We begin by rearranging Equation \eqref{eqn: decomposition of Q}: 
\[L(r_k) = -Q_\alpha(\phi_k, \phibar) - (D_1Q_\alpha(\phi_k, \phibar)-L)(r_k) - q_k(r_k).\]
This gives:
\begin{equation} \label{eqn: I(L)(r_k)}
    \sI(L)(r_k) = (L- L^\trans)(r_k) = -[Q_\alpha(\phi_k, \phibar)]_{\sweight{k+1}} - e_{k+1},
\end{equation}
where 
\begin{align} \label{eqn: e_k+1}
    \begin{split}
        e_{k+1} = &\; Q_\alpha(\phi_k, \phibar) - [Q_\alpha(\phi_k, \phibar)]_{\sweight{k+1}} \\
        &+ L^\trans (r_k) + \Big( D_1Q_\alpha(\phi_k, \phibar)-L \Big) (r_k) + q_k (r_k).
    \end{split}
\end{align}
We divide this construction into several cases based on the magnitude of the weight $\mu_+ + a_{k+1}$ (see Proposition \ref{prop: choice of G} and Proposition \ref{prop: extension of Green's operator to tensor bundles}), which determines the appropriate choice of the right inverse $\cG$ of $-\sI(L)$ in each case. Generally, if the first weight exceeds a certain threshold, $\cG|_{E_{\mu}} = G_{\infty}$ on the corresponding eigenspace $E_{\mu}$ (see Equation \eqref{eqn: Green's operator, G_infty}); otherwise, we take $\cG|_{E_{\mu}} = G_{0}$ (see Equation \eqref{eqn: Green's operator, G_0}). Under this choice of $\cG$, the right-hand side of Equation \eqref{eqn: I(L)(r_k)} can be rewritten as
\[\sI(L) \cG \Big( [Q_\alpha(\phi_k, \phibar)]_{\sweight{k+1}} + e_{k+1} \Big).\]
So the difference
\begin{equation} \label{eqn: ker I(L)(r_k)}
    r_k - \cG([Q_\alpha(\phi_k, \phibar)]_{\sweight{k+1}} + e_{k+1}) \in \ker \sI(L),
\end{equation}
which implies that it can be expressed as a linear combination of $x^{\mu^{(i)}_+}$ and $x^{\mu^{(i)}_-}$, where $\mu^{(i)}_\pm$ are indicial roots of $\sI(L)$.

\emph{Case 1: $\sweight{k} \geq \mu_+^{\max}$}. By the inductive hypothesis, $r_k \in C^0_{\sweight{k} + \delta, \zeta}$ for some $\delta > 0$. We choose $\cG|_{E_{\mu}} = G_\infty$ for all eigenvalues $\mu$. Since $\sweight{k} + \delta > \mu_+^{\max}$, Proposition \ref{prop: choice of G} guarantees that $\cG$ is the right inverse of $-\sI(L)$. Furthermore, since $\sweight{k} \geq \mu_+^{\max}$, neither of the terms $x^{\mu_+^{(i)}}$ nor $x^{\mu_-^{(i)}}$ lies in $\ker \sI(L)$.
 By Equation \eqref{eqn: ker I(L)(r_k)}, the error term $r_k$ admits a decomposition
\begin{align*}
    r_k = \cG \Big( [Q_\alpha(\phi_k, \phibar)]_{\sweight{k+1}} \Big) + \cG(e_{k+1}).
\end{align*}
We then define 
\[\psi_{k+1} = \cG \Big( [Q_\alpha(\phi_k, \phibar)]_{\sweight{k+1}} \Big) \in \Adot[\sweight{k+1}]_\zeta \quad \text{and} \quad r_{k+1} = \cG(e_{k+1}).\]

To check assertion (ii), we decompose $Q_\alpha(\phi_{k+1}, \phibar) $ as follows
\begin{align*}
    Q_\alpha(\phi_{k+1}, \phibar) 
    =\;& [Q_\alpha(\phi_k, \phibar)]_{\sweight{k+1}} 
    + \sI(L_g)(\psi_{k+1}) \tag{I} \\
    &+ \big(Q_\alpha(\phi_k, \phibar) 
    - [Q_\alpha(\phi_k, \phibar)]_{\sweight{k+1}}\big) \tag{II} \\
    &+ L^{\trans}(\psi_{k+1}) 
    + \big(D_1 Q_\alpha(\phi_k, \phibar) - L\big)(\psi_{k+1}) 
    + q_k(\psi_{k+1}). \tag{III}
\end{align*}

\begin{itemize}
    \item (I) vanishes by definition of $\psi_{k+1}$. Indeed, since $-\sI(L_g)\cG = \id $
    \begin{align*}
        \mathrm{(I)} &= [Q_\alpha(\phi_k, \phibar)]_{\sweight{k+1}} + \sI(L_g) \cG \Big( [Q_\alpha(\phi_k, \phibar)]_{\sweight{k+1}} \Big)\\
        &= [Q_\alpha(\phi_k, \phibar)]_{\sweight{k+1}} - [Q_\alpha(\phi_k, \phibar)]_{\sweight{k+1}} = 0.
    \end{align*}
    
    \item (II) belongs to $\cA_{\sweight{k+2},\zeta}$ by the definition of $[~]_{\sweight{k+1}}$ and $a_{k+2}$. 
    
    \item It remains to show (III) belongs to $\cA_{\sweight{k+2},\zeta}$. Note that transverse derivatives increase the first weight by at least 1 and radial derivatives preserve the first weight. This takes care of the first term in (III). For the second term, recall we know $D_1Q_\alpha(\phi_k, \phibar)-L \in x \cQ_1$ for asymptotically hyperbolic $\phi_k$, so the first weight gets amplified by at least 1. Finally the at least quadratic term doubles the first weight. So line (III) belongs to $\cA_{\sweight{k+2},\zeta}$. 
\end{itemize}

We pass the decay for the transverse derivatives of $r_{k+1}$ to the decay for the transverse derivatives of $e_{k+1}$ via commutativity with $\cG$, under the condition $\sweight{k+1}+\delta > \mu_+^{\max}$:
\[(\nablatt)^p r_{k+1} = (\nablatt)^p \Big( \cG(e_{k+1}) \Big) = \cG \Big( (\nablatt)^p e_{k+1} \Big).\]
Indeed, this follows from \cite[p. 846]{BH14}, for $u \in C^0_\xi$, 
\begin{align*}
    \left\vert x^{-\theta} \int_0^x \widetilde{x}^{-\theta-1} \, u(\widetilde{x}) \, d\widetilde{x} \right\vert &\leq C x^{\xi}, \text{ when } \theta < \xi.
\end{align*}

\begin{align*}
    \left\vert x^{-\xi} \int_x^{x_0} \widetilde{x}^{-\xi-1} \, u(\widetilde{x}) \, d\widetilde{x} \right\vert &\leq C x^{\xi}, \text{ when } \theta = \xi.
\end{align*}
So $u(x)$ and the transverse derivative of $\cG(u(x))$ have the same decay rate as $x \to 0$.

It remains to show that the transverse derivatives of 
\begin{align*}
    e_{k+1} =&\; Q_\alpha(\phi_k, \phibar) - [Q_\alpha(\phi_k, \phibar)]_{\sweight{k+1}} \\
    &+ L^{\trans}(r_k) + \Big(D_1Q_\alpha(\phi_k, \phibar) - L\Big)(r_k) + q_k(r_k)
\end{align*}
belong to $C^0_{\sweight{k+1} + \delta, \zeta}$. We check the following:
\begin{itemize}
    \item By $L$-polyhomogeneity, all transverse derivatives of $Q_\alpha(\phi_k, \phibar) - [Q_\alpha(\phi_k, \phibar)]_{\sweight{k+1}}$ belong to $\cA_{\mu_+ + a_{k+2}, \zeta}$. 
    
    \item By the inductive hypothesis on $r_k$, we know $L^{\trans}(r_k) \in C^0_{\sweight{k}+1 + \delta, \zeta}$. 
    
    \item By the inductive hypothesis on $r_k$ and by the fact that $D_1Q_\alpha(\phi_k, \phibar) - L \in x \cQ_1$, we know the transverse derivative of $\Big(D_1Q_\alpha(\phi_k, \phibar) - L\Big)(r_k) \in C^0_{\sweight{k} + 1 + \delta, \zeta}$. 

    \item Finally, the transverse derivatives of $q_k(r_k)\in C^0_{\sweight{k+1} + \delta, \zeta}$ because $q_k$ is the at least quadratic term and since $\mu_+ \geq 1$.
\end{itemize}
We conclude $(\nablatt)^p e_{k+1} \in C^0_{\sweight{k+1} + \delta, \zeta}$.

\emph{Case 2: $\sweight{k} < \mu_+^{\max}$}.
This case further divides into two subcases due to the nature of Green’s integral operator in Equations \eqref{eqn: Green's operator, G_infty} and \eqref{eqn: Green's operator, G_0}. Specifically, as pointed out in Proposition \ref{prop: choice of G}, $G_0$ may not map $e_{k+1}$ to the desired H\"older space.

Recall that the indicial roots of $L$ are given in Example \ref{ex: laplacian is an isom} and are denoted by $\mu_\pm^{(i)}$, with~$i = 0,1,2,3$.

\emph{Subcase 2.1: for all $i$, $\mu_+^{(i)} \notin [\sweight{k}+\delta, \sweight{k+1}]$}.

\begin{center}
    \includegraphics[width=0.4\textwidth]{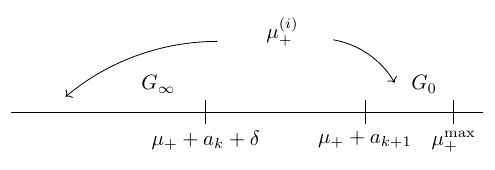}
\end{center}  
Define the index set $\cI = \{i: \sweight{k+1} < \mu_+^{(i)}\}$. By the inductive hypothesis, we know that~$r_k \in C^0_{\sweight{k} + \delta, \zeta}$. Since $\ker \sI(L) \cap C^0_{\sweight{k} + \delta, \zeta}$ is spanned by $x^{\mu_+^{(i)}}$ for $i \in \cI$, Equation~\eqref{eqn: ker I(L)(r_k)} implies that $r_k$ decomposes as
\begin{equation} \label{eqn: r_k}
    r_k = \cG \Big( [Q_\alpha(\phi_k, \phibar)]_{\sweight{k+1}} \Big) + \cG(e_{k+1}) + A(x) 
\end{equation}
where $e_{k+1}$ is given by Equation \eqref{eqn: e_k+1},
\begin{align*}
    \cG\big|_{E_{\mu_+^{(i)}}} = \begin{cases}
        G_0 & \text{if } i \in \cI \\
        G_\infty & \text{otherwise},
    \end{cases}
\end{align*}
and $A(x) = \sum_{i\in\cI} A_i \,  x^{\mu_+^{(i)}}$,
with $A_i$ sections at the boundary at infinity. We define
\[\psi_{k+1} = \cG \Big( [Q_\alpha(\phi_k, \phibar)]_{\sweight{k+1}} \Big) \in \Adot[\sweight{k+1}]_\zeta.\]
Then $r_{k+1} = \phi - \phi_{k+1}  =r_k - \psi_{k+1} = \cG(e_{k+1}) + A(x)$.

The only difference from the previous case in the proof lies in the additional term $A(x)$. Since $\psi_{k+1}$ remains unchanged, this additional term does not affect the decomposition (I)–(III). Therefore, assertion (ii) holds for the same reason as in Case 1. To complete the proof, we now focus on the transverse derivatives of $r_{k+1}$. On the eigenspaces where~$\cG = G_\infty$, the proof is identical to that of Case 1. It remains to consider the case where $\cG = G_0$.
\begin{equation} \label{eqn: r_k+1}
    r_{k+1} = r_{k} - \psi_{k+1} = G_0\Big(e_{k+1} \Big) + \sum_{i\in\cI} A_i \,  x^{\mu_+^{(i)}}, 
\end{equation}
where $e_{k+1}$ is the same as in the previous case, as seen in Equation \eqref{eqn: e_k+1}.
\begin{itemize}
    \item We claim that $A_i$ is smooth with respect to transverse directions. This immediately implies 
    \[\big( \nablatt \big)^p \Big(A_i \,x^{\mu_+^{(i)}}\Big) \in \Adot_{\mu_+^{(i)}, \zeta} \subset \Adot_{\sweight{k+1}+\delta, \zeta},\]
    for $0 < \delta \leq \mu_+^{(i)} - \sweight{k+1}$. To justify the claim we write using Equation \eqref{eqn: r_k} that 
    \[A_i = x^{-\mu_+^{(i)}} \Big[ r_{k} - \psi_{k+1} - G_0(e_{k+1}) \Big].\]
    A priori, $A_i$ depends only on transverse directions, so we restrict our attention to the slice~$x = x_0$. 
    By the inductive hypothesis on $r_k$, $r_{k+1}$ and $\psi_{k+1}$ are smooth in the transverse variables $(x^1, \cdots, x^n)$. The term $G_0(e_{k+1})$ is also smooth in transverse variables, so the conclusion follows.
    
    \item Again, we pass the decay for the transverse derivatives of $r_{k+1}$ to the decay for the transverse derivatives of $e_{k+1}$ via commutativity with $\cG$:
    \[ (\nablatt)^p \Big( G_0(e_{k+1}) \Big) = G_0 \Big( (\nablatt)^p e_{k+1} \Big), \]
    using the integral control given in 
    \cite[p.844]{BH14} with the pair of real numbers $\alpha_\pm = \mu_\pm^{(i)}$. So transverse derivatives of the second term of Equation \eqref{eqn: r_k+1} belongs to $\Adot_{\mu_+^{(i)}, \zeta}$. 
\end{itemize}

\emph{Subcase 2.2: for some $i$, $\mu_+^{(i)} \in [\sweight{k}+\delta, \sweight{k+1}]$}.
\begin{center}
        \includegraphics[width=0.4\textwidth]{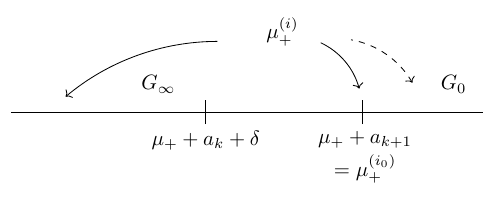}
\end{center}
By definition of the sequence $\sweight{k}$, we know $\sweight{k+1} = \mu_+^{(i_0)}$ for some $i_0$, and there is no critical weight other than this $\mu_+^{(i_0)}$ belonging to the interval.

Similar to Subcase 2.1, the decomposition of $r_k$ includes polynomials in $x$ that lie in the kernel of $\sI(L_g)$ , as given in Equation \eqref{eqn: ker I(L)(r_k)}. Moreover, a correction term $A_0 x^{\sweight{k+1}} \log x$ 
appears because $G_0$ does not necessarily map $\Adot[\sweight{k+1}]_{\zeta}$ to itself when $\sweight{k} = \mu_+^{(i)}$ (see Proposition \ref{prop: choice of G}). It follows that $r_k$ decomposes as 
\begin{align*}
    r_k = \cG \Big( [Q_\alpha(\phi_k, \phibar)]_{\sweight{k+1}} \Big) + \cG(e_{k+1}) + A(x) + A_0 \,x^{\sweight{k+1}} \log x \in C^0_{\sweight{k} + \delta, \zeta},
\end{align*}
where $A(x) = \sum_{i\in\cI} A_i \,  x^{\mu_+^{(i)}}$ and \begin{align*}
    \cG\big|_{E_{\mu_+^{(i)}}} = \begin{cases}
        G_0 & \text{if } \sweight{k+1} + \delta \leq \mu_+^{(i)}\\
        G_\infty & \text{otherwise}.
    \end{cases}
\end{align*}
Thus we set 
\[\psi_{k+1} = \cG \Big( [Q_\alpha(\phi_k, \phibar)]_{\sweight{k+1}} \Big) + A_0x^{\sweight{k+1}}\log x \in \Adot[\sweight{k+1}]_\zeta.\]
Then $r_{k+1} = \phi - \phi_k = r_k - \psi_{k+1} = \cG(e_{k+1}) + A(x)$.

We focus on the transverse regularity of the new term involving $A_0$, while the remaining terms have been discussed in previous cases. As with the proof for $A(x)$ in the previous case, we claim that $A_0$ is smooth with respect to the transverse directions. We can again restrict to the slice $x=x_0$ since $A_0$ depends only on transverse variables. Then the smoothness of~$r_{k+1}$ and $r_k$ implies the claim.

Now assertions (i)--(iii) follow immediately from the fact that
\[\big( \nablatt \big)^p \Big(A_0\, x^{\sweight{k+1}} \log x\Big) \in \Adot_{\sweight{k+1}, \zeta} \subset \Adot_{\sweight{k}+\delta, \zeta},\]
for $0 < \delta \leq \mu_+^{(i)} - (\sweight{k+1})$.
\end{proof}

\section{The ambient obstruction tensor} \label{sec: ambient obstruction}

We investigate the obstruction to the existence of a smooth formal power series solution for a conformally compact Lovelock metric with a prescribed conformal infinity. Graham and Hirachi \cite{GH04} have previously explored the properties of such obstructions in the context of conformally compact Einstein metrics.

Corollary \ref{thm: phg of semi-ball}, together with \cite{Alb20}, implies that a conformally compact Lovelock metric~$g = \frac{dx^2 + h}{x^2}$ on a manifold $X^{n+1}$, analogous to its Einstein counterpart, admits a Fefferman-Graham expansion: 
\begin{align*}
    h_x = \begin{cases}
        h_0 + h_2 x^2 + \text{(even powers)} + h_{n-1} x^{n-1} + h_n x^n + \cdots & n \text{ odd}\\
        h_0 + h_2 x^2 + \text{(even powers)} + h_{n,1} \log (x)x^{n-1} + h_n x^n + \cdots  & n \text{ even}.
    \end{cases}
\end{align*}
The expansion is smooth when $n$ is odd. For even $n$, smoothness is obstructed by the term~$h_{n,1} \log(x)x^{n-1}$. To extract the logarithmic coefficient from the expansion, we define the tensor $\sO$.
\begin{defn}
    Recall the Lovelock tensor is given by 
    \[F_\alpha(g) = \sum \alpha_q \left( \Ricqq_g - \lambda^\qq g \right) -\frac{\alpha_q}{2q} \left( \scalqq_g - (n+1)\lambda^\qq  \right)g.\]
    We define the (ambient) \bfemph{obstruction tensor} to be 
    \[\sO = x^{2-n} \tf\big(F_\alpha(g)\big)\big|_{x=0},\] 
    where $\tf$ denotes the trace-free part with respect to $g$. 
\end{defn}
Albin \cite[Lemma 2.1]{Alb20} showed that the obstruction tensor is trace-free and divergence-free. We will compute that the  leading order term of $\sO$ is essentially the same as that of the ambient tensor of a conformally compact Lovelock metric $g$.

\begin{thm}[Leading-Order Term of the Lovelock Obstruction Tensor]  \label{main thm: obstruction leading order}
    Let $M$ be a smooth $n$-manifold with conformal infinity $[h]$, where $n \geq 4$, and let $X$ be an $(n+1)$-manifold with boundary $M$. Let~$x$ denote a boundary defining function for $M$.

    There exists a metric $g$ satisfying $x^2 g|_{TM} \in [h]$ and $F_\alpha(g) = O(x^{n-2})$. The leading-order term of the obstruction tensor $\sO = x^{2-n} \tf\big(F_\alpha(g)\big)\big|_{x=0}$ is given by
    \begin{equation} \label{eqn: h.o.t. of O_ij}
        \frac{A_1(\alpha)}{c_n} \Delta^{\frac{n}{2} - 2} \left( \tensor{P}{_{ij,k}^k} - \tensor{P}{_k^k_{,ij}} \right),
    \end{equation}
    where $P$ is the Schouten tensor of $h$, and the coefficients are given by
    \begin{align*}
        A_1(\alpha) = \sum_q \alpha_q \left( -\frac{1}{2} \right)^{q-1} \frac{(n-2)!}{2} \frac{(2q-1)!}{(n-2q)!}, \quad \text{and} \quad c_n = \frac{2^{n-2}(n/2-1)!^2}{n-2}.
    \end{align*} 
\end{thm}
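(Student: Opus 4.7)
The plan is to adapt the Graham--Hirachi argument \cite{GH04} for the Einstein obstruction tensor, exploiting the observation that at the linearized level the Lovelock tensor $F_\alpha$ differs from its Einstein counterpart only by the overall scalar factor $A_1(\alpha)$.

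First, I will construct the approximate solution. The iterative argument from the proof of Theorem~\ref{thm: first approximation}, combined with the hypothesis that $A_1(\alpha)\neq 0$, allows me to successively solve for the even Taylor coefficients $h_{2k}$ of $h_x$ with $0\le 2k\le n-2$ in terms of lower-order jets of $h_0=h$. The resulting metric $g$ satisfies $F_\alpha(g)=O(x^{n-2})$, and $\sO=x^{2-n}\tf(F_\alpha(g))|_{x=0}$ measures the obstruction to pushing the expansion one order higher; its trace-free and divergence-free properties are already recorded in \cite[Lemma 2.1]{Alb20}.

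Next, I will isolate the highest-derivative contribution to $\sO$. The strategy is to rank the terms in $F_\alpha(g)$ by their total order of derivatives in $h$. The contribution from the nonlinear Lovelock terms with $q\ge 2$ take the form $\ctr^{2q-1}(\Rm^q)$, and using the contraction formulae of Propositions~\ref{prop: -1 contraction of Rm and g} and~\ref{prop: full contraction l form times 1 form} one can expand these as sums of products of at least two Riemann factors. Since each Riemann factor carries at most two derivatives of $h$, placing $q\ge 2$ such factors inside the contraction structure precludes the appearance of a pure $\Delta^{n/2-2}$ acting on a single Schouten tensor. Hence the $\Delta^{n/2-2}(P_{ij,k}{}^{k}-P_k{}^k{}_{,ij})$ portion of $\sO$ comes solely from the linear part of $F_\alpha$ applied to the iteratively constructed Taylor coefficients $h_{2k}$. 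By \cite[Lemma 3.2]{Alb20}, this linear part at the Poincar\'e model equals
\[
\frac{A_1(\alpha)}{4}\Bigl[-(n-1)(\Delta+2n)\bigl(\ctr_g(r)\,g\bigr)+2(\Delta-2)r_0\Bigr],
\]
which is $A_1(\alpha)$ times the Einstein linearization, up to a common normalization. Therefore the iterative procedure determining each $h_{2k}$ from symmetrized covariant derivatives of $P$ yields the same leading-derivative answer as in the Einstein case, simply scaled by $A_1(\alpha)$. Invoking the Graham--Hirachi identity that the leading-derivative term of the Einstein obstruction is $\tfrac{1}{c_n}\Delta^{n/2-2}(P_{ij,k}{}^{k}-P_k{}^k{}_{,ij})$ then yields \eqref{eqn: h.o.t. of O_ij}.

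The main obstacle I anticipate is the derivative-counting step in the middle paragraph: one must rigorously verify that every nonlinear higher-order Lovelock contribution generates strictly fewer powers of the Laplacian than $\Delta^{n/2-2}$ when acting on $h$. I plan to handle this by expanding $\Ric^{(2q)}$ for $q\ge 2$ into products of contracted Riemann factors at the boundary (where $g$ agrees with $\frac{dx^2+h}{x^2}$ to the appropriate order) and tracking parity and derivative count term by term, exactly as in the Einstein derivation of \cite{GH04} but with the extra bookkeeping required by the presence of multiple curvature factors. Once this reduction is in place, the explicit constant $\tfrac{A_1(\alpha)}{c_n}$ is forced by matching the normalizations of the Lovelock linearization and the Einstein linearization.
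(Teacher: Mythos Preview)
Your proposal follows the same broad strategy as the paper---reduce the leading part of $\sO$ to $A_1(\alpha)$ times the Einstein obstruction and then invoke \cite{GH04}---but the execution differs. The paper does not pass through the abstract linearization of \cite[Lemma 3.2]{Alb20}. Instead it works directly with the explicit formula \cite[Eq.~(2.5)]{Alb20} expressing $\Ric^{(2q)}_g$ in terms of $h$ and its $x$-derivatives, computes $\partial_x^s\big|_{x=0}\bigl(x(\Ric^{(2q)}_g-\lambda^{(2q)}g)\bigr)$ term by term, and simplifies the resulting contractions via Proposition~\ref{prop: -1 contraction of Rm and g}. Summing over $q$ with weights $\alpha_q$ produces Equation~\eqref{eqn: derivative of F}, which displays the coefficient $\tfrac{A_1(\alpha)}{2}$ in front of the highest-derivative trace-free piece $(n-s-1)\partial_x^{s+1}h+2s\,\partial_x^{s-1}\Ric_h$; setting $s=n-1$ and quoting the Einstein formula for $\tf[\partial_x^{2\ell}h]$ from \cite{GH04} finishes the computation. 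This route handles your anticipated ``main obstacle'' automatically: tracking the highest-order $\partial_x^k h$ in the explicit curvature formula is mechanical and avoids a separate derivative-counting argument for the nonlinear residue.

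One caution on your middle paragraph: the phrase ``nonlinear Lovelock terms with $q\ge 2$'' is misleading. Each $\ctr^{2q-1}(\Rm^q)$ splits into a linear piece (one perturbed curvature factor, $q-1$ background factors $-\tfrac12 g^2$), which \emph{does} contribute to $A_1(\alpha)$, and genuinely nonlinear pieces (two or more perturbed factors). Your derivative-counting must be applied only to the latter; as written, the paragraph could be read as discarding the linear $q\ge 2$ contribution, which would give the wrong constant.
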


\begin{proof}
    The proof of the lemma is a direct computation. A priori, we have $F_\alpha(g) = O(x^{-1})$, so we work with $x F_\alpha(g)$. Note that the obstruction tensor is by definition the trace-free part, so we focus on the Ricci-$\qq$ terms in $F_\alpha(g)$.

    As shown in Equation (2.5) of \cite{Alb20}, the Ricci-$\qq$ curvatures of $h$ are determined by the boundary data of the conformally compact metric
    \begin{align} 
    \begin{split} \label{eqn: Ricci-2q}
        &\hspace{2ex}\Big[ \Ricqq_g \Big]_{ij} \\[3ex]
        =&\phantom{+} \Big[\ctr[h]^{2q-1} (\Rm_g^q)\Big]_{ij} \\[2ex]
        =& \phantom{+}\frac{1}{x^2} \ctr[h]^{2q-1} \Bigg\{ \bigg[ x^2 \Rm_h - \frac{1}{2} \Big(\frac{x}{2} h' - h \Big)^2  \bigg]^q \Bigg\} \\
        &+ \frac{1}{x^2} (2q-1)q \ctr[h]^{2q-2} \Bigg\{ \bigg[ \frac{x^2}{2} \Big(-h'' + \frac{1}{2} \ctr[h](h')h' - \frac{1}{4} \ctr[h]((h')^2) \Big) + \frac{x}{2}h' - h \bigg] \\
        &\cdot \bigg[ x^2 \Rm_h - \frac{1}{2} \Big(\frac{x}{2} h' - h \Big)^2  \bigg]^{q-1} \Bigg\}.
    \end{split}
    \end{align}

    We begin by computing the $s$-th order derivatives of the first term in $xF_\alpha(g)$ for $s > 1$. Using Equation \eqref{eqn: Ricci-2q}, and noting that $\lambda^\qq g$ cancels the $O(x^{-1})$ term in $\Ricqq_g$, we have
    \begin{align*}
        &\pd_x^s \Bdy x \left({\Ricqq_g}_{ij} - \lambda^\qq  g_{ij} \right)\\[2ex]
        = &-q \left( -\frac{1}{2} \right)^q \ctr[h]^{2q-1} [ h^{2q-1} \;\pd_x^{s+1} \bdy (h)] + qs \left( -\frac{1}{2} \right)^{q-1} \;\pd_x^{s-1} \bdy \ctr[h]^{2q-1} [ h^{2q-2} \Rm_h] \\
        &+ (2q-1)qs \left( -\frac{1}{2} \right)^q \ctr[h]^{2q-2} [ h^{2q-2} \;\pd_x^{s+1} \bdy (h)] - (2q-1)q \left( -\frac{1}{2} \right)^q \ctr[h]^{2q-2} [ h^{2q-2} \;\pd_x^{s+1} \bdy (h)] \\
        &+ (2q-2)(2q-1)qs \left( -\frac{1}{2} \right)^{q-1} \;\pd_x^{s-1} \bdy \ctr[h]^{2q-2} [ h^{2q-3} \Rm_h] \\
        &- (2q-2)(2q-1)q \left( -\frac{1}{2} \right)^q \ctr[h]^{2q-2} [ h^{2q-2} \;\pd_x^{s+1} \bdy (h)] + \text{terms involving } \pd_x^k h_{ij} \text{ with } k< s.
    \end{align*}
    The contractions on the right-hand side can be expressed in terms of $\Ric_h, \scal_h$, and derivatives of $h$ using Proposition \ref{prop: -1 contraction of Rm and g}. Moreover, since interchanging the derivative with the contraction operator does not affect the leading order term, we obtain 
    \begin{align*}
    \begin{split}
        \pd_x^s \Bdy x &\left({\Ricqq_g}_{ij} - \lambda^\qq  g_{ij} \right)\\[2ex]
        = &\phantom{+} q(-n+s+1) \left( -\frac{1}{2} \right)^q \frac{(n-2)!}{(n-2q)!} (2q-1)!  \;\pd_x^{s+1} \bdy (h) \\
        &+ qs \left( -\frac{1}{2} \right)^{q-1} \frac{(n-2)!}{(n-2q)!} (2q-1)! \;(-n+s+1) \; \pd_x^{s-1} \bdy (\Ric_h)\\
        &+ q[(n-1)(1-2q)+2s(q-1)] \left( -\frac{1}{2} \right)^q \frac{(n-2)!}{(n-2q+1)!} (2q-1)! \; \ctr[h] [\pd_x^{s+1} \bdy (h)]\, h\\
        &+ q(q-1)s \left( -\frac{1}{2} \right)^{q-1} \frac{(n-2)!}{(n-2q+1)!} (2q-1)!  \; \pd_x^{s-1} \bdy (\scal_h) \,h \\
        &+ \text{terms involving } \pd_x^k h_{ij} \text{ with } k< s.
    \end{split}
    \end{align*}

    By invoking power series in the expression of $F_\alpha(g)$, we can write the derivatives as
    \begin{align} \label{eqn: derivative of F}
        \begin{split}
            \pd_x^s \Bdy x \cdot \sum_q \alpha_q &\left({\Ricqq_g}_{ij} - \lambda^\qq  g_{ij} \right)\\ 
            = &\phantom{+} \frac{A_1(\alpha)}{2} \Big[ (n-s-1) \pd_x^{s+1} \bdy (h) + 2s \,\pd_x^{s-1} \bdy (\Ric_h) \Big] \\
            &+ \Big[(n-1-s) A_2(\alpha) - \frac{1}{2n}\lambda(\alpha)\Big] \; \ctr[h] [\pd_x^{s+1} \bdy (h)]\, h\\
            &+ A_2(\alpha) \; \pd_x^{s-1} \bdy (\scal_h) \,h +  \text{terms involving } \pd_x^k h_{ij} \text{ with } k< s,
        \end{split}
        \end{align}
    where the constants are the same as those defined in \cite[p.4, p.17]{Alb20}. For the same reason in \S\ref{sec: Lovelock tensor}, we assume $\kappa = 1$ for simplicity:
    \begin{align*} 
        A_1(\alpha) &= \sum_q \alpha_q \left( -\frac{1}{2} \right)^{q-1} \frac{(n-2)!}{2} \frac{(2q)!}{(n-2q)!}, \\
        A_2(\alpha) &= \sum_q \alpha_q \left( -\frac{1}{2} \right)^{q-1} \frac{(n-2)!}{2} \frac{(2q)!}{(n-2q+1)!} (q-1), \\
        \lambda(\alpha) &= \sum_q \alpha_q \left( -\frac{1}{2} \right)^q \frac{n!\; (2q)!}{(n-2q+1)!}.
    \end{align*}
    
    Observe that only the first term on the right-hand side of Equation \eqref{eqn: derivative of F} contributes to the leading order of the trace-free part of $x F_\alpha(g)$. Additionally, the behavior of the leading-order term in $\tf [\pd_x^{s+1} \bdy (h)]$ aligns with its Einstein counterpart, shown in \cite[Theorem~2.1]{GH04}, since no generalized Ricci curvature appears. More precisely, this leads to the same result for the highest-order term in the derivatives of $h$:
\begin{align*}
\begin{split}
    \tf \left[ \pd_x^{2\ell} \bdy (h) \right] 
    &= 2 \cdot \frac{3 \cdot 5 \cdots (2\ell - 1)}{(n - 4)(n - 6) \cdots (n - 2\ell)} \cdot \Delta^{\ell - 2} \bigg( 
        \tensor{P}{_k^k_{,ij}} 
        - \tensor{P}{_{ij,k}^k} 
    \bigg) \\
    &\quad + \text{terms involving } \pd_x^k h_{ij} 
        \text{ with } k < 2\ell.
\end{split}
\end{align*}
where $2 \leq \ell < n/2$. Differentiating $x \tf(F_\alpha(g))_{ij} = x^{n-1} \sO_{ij} \mod O(x^n)$ $(n-1)$-times and then restricting to the boundary yields
\begin{align*}
    &\tf \left[\pd_x^{n-1} \Bdy x \cdot \sum_q \alpha_q \left({\Ricqq_g}_{ij} - \lambda^\qq  g_{ij} \right)\right] \\
    =&A_1(\alpha) (n-1) \tf\left[ \pd_x^{n-2} \bdy (\Ric_h) \right] + \left(\text{terms involving } \pd_x^k h_{ij} \text{ with } k< n-1 \right) = (n-1)! \sO_{ij}.
\end{align*}
From this, we conclude that
\begin{alignat*}{2}
  \sO_{ij} 
  &=\frac{A_1(\alpha)}{(n-2)!} \,\frac{3 \cdot 5 \cdots (n-3)}{(n-4)(n-6) \cdots 2} 
  &&\hspace{-3ex}\Delta^{n/2 - 2} \left( \tensor{P}{_{ij,k}^k} - \tensor{P}{_k^k_{,ij}} \right) \\
  & 
  &&+ \text{terms involving } \pd_x^k h_{ij} \text{ with } k< n-1\\
  &= \frac{A_1(\alpha)}{c_n}  \,\Delta^{n/2 - 2} \left( \tensor{P}{_{ij,k}^k} - \tensor{P}{_k^k_{,ij}} \right)
  &&+ \text{terms involving } \pd_x^k h_{ij} \text{ with } k< n-1.
\end{alignat*}

We point out that $c_n$ appears in the denominator since the definition for $\sO$ here is different from Graham and Hirachi's definition \cite{GH04} by that constant.
\end{proof}

\begin{rmk}
    Theorem \ref{main thm: obstruction leading order}, in conjunction with Lemma 2.1 from \cite{Alb20}, generalizes Assertions (1)--(3) of Theorem 2.1 in \cite{GH04} from the Einstein case to the Lovelock case.
\end{rmk}

\section{Singular Yamabe-\texorpdfstring{$\qq$}{2q} problem}  \label{sec: singular Yamabe}
Just like for conformally compact Einstein metrics, where one solves the Einstein equation as a Taylor series at the boundary to the extent possible, one can formally solve the constant scalar curvature equation to the extent possible \cite[\S2]{Gra17}. Here, we extend the discussion to the Lovelock setting and refer to the problem of formally solving for constant scalar-$\qq$ curvatures as \bfemph{singular Yamabe-$\boldsymbol{\qq}$ problem.} 

Consider $(X,\gbar)$ a manifold with boundary of dimension $n+1$ and denote $\pd X = M$. Let $x$ denote the special boundary defining function, which represents the geodesic distance to $M$ with respect to $\gbar$. We aim at a conformal metric $g = u^{-2}\gbar$ that has constant scalar-$\qq$ curvature. Equivalently, we seek to find a positive function $u$ on $X$, which is a solution of the following equation:
\begin{equation}\label{eqn: sing Yamabe prob}
    \scal_g^\qq = (n+1) \lambda^\qq,
\end{equation}
where $\lambda^\qq = \big(-\frac{1}{2}\big)^q \frac{n!(2q)!}{(n-2q+1)!}$ are the constants previously defined in \S\ref{sec: Lovelock tensor}.

Let $\kappa = |du|^2_{\gbar}$. For a multi-index $\beta = (\beta_1, \dots, \beta_q)$ with $2q \leq n$, we define 
\[\widetilde{B}_{1,2}(\beta, \kappa) = A_1(\beta, \kappa) + (n+1)A_2(\beta, \kappa),\]
where 
\begin{align*} 
    A_1(\beta,\kappa) &= \sum_q \beta_q \left( -\frac{\kappa}{2} \right)^{q-1} \frac{(n-2)!}{2} \frac{(2q)!}{(n-2q)!}, \\
    A_2(\beta,\kappa) &= \sum_q \beta_q \left( -\frac{\kappa}{2} \right)^{q-1} \frac{(n-2)!}{2} \frac{(2q)!}{(n-2q+1)!} (q-1).
\end{align*}
Note that $\widetilde{B}_{1,2}(\beta, \kappa)$ coincides with the constant $B_{1,2}(\alpha, \beta, \kappa)$ defined in \cite{Alb20} when $\alpha = 0$. A straightforward simplification then yields the explicit formula:
\[\widetilde{B}_{1,2}(\beta, \kappa) = \sum_q \beta_q \left( -\frac{\kappa}{2} \right)^{q-1} \frac{(n-1)!}{2} \frac{(2q)! \; q}{(n-2q+1)!}.\]
We will see that 
\begin{thm}[Formal Polyhomogeneous Solution to the Singular Yamabe-$(2q)$ Problem]
    For a prescribed $\beta = (\beta_1, \cdots, \beta_q)$, with $2q\leq n$, satisfying $\widetilde{B}_{1,2}(\beta, \kappa) \neq 0$, there exists a function $u$ on $X$ that serves as a boundary defining function of $M = \pd X$ such that the scalar-$\qq$ curvatures of the conformally rescaled metric $g = u^{-2}\gbar$ satisfy
    \begin{equation}
        \widetilde{F}_\beta(g) = \sum \beta_q \Big(\scal_g^\qq - (n+1)\lambda^\qq\Big) = O(x^{n+2} \log x),
    \end{equation}
    where $\lambda^\qq = \big(-\frac{1}{2}\big)^q \frac{n!(2q)!}{(n-2q+1)!}$. 
    Moreover, the function $u$ has the form
    \[u = x + u_2 x^2 + \cdots + u_{n+1} \,x^{n+1} + \cL^\qq x^{n+2} \log x, \]
    where $\cL^\qq$ is the singular Yamabe-$(2q)$ obstruction.
\end{thm}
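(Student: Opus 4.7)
The strategy is to determine the coefficients of $u$ order by order by expanding $\widetilde{F}_\beta(g) = 0$ in powers of $x$, solving the resulting indicial equation at each step. Without loss of generality I would first pass to a geodesic normal collar in which $\gbar = dx^2 + \overline{h}_x$ with $\kappa = |dx|^2_{\gbar} = 1$ (other values of $\kappa$ follow by rescaling). I then posit the ansatz
\[u = x + \sum_{k=2}^{n+1} u_k(y)\, x^k + \cL^{(2q)}(y)\, x^{n+2}\log x,\]
set $g = u^{-2}\gbar$, and use the conformal transformation of double forms (as in \S2.1 of \cite{Alb20}) to express $\Ricqq_g$, and hence $\scalqq_g$, as a polynomial in $u$, its covariant derivatives with respect to $\gbar$, and the curvature jets of $\gbar$.

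The core computation is to identify the coefficient of $u_k$ in the linearization of $\widetilde{F}_\beta(g)$ at order $x^{k-1}$. Using the contraction formula \eqref{eqn: contraction formula} together with Propositions \ref{prop: -1 contraction of Rm and g} and \ref{prop: full contraction l form times 1 form}, one can write the leading asymptotic of $\scalqq_g - (n+1)\lambda^\qq$ explicitly in terms of $u^{-2}|du|^2_{\gbar} - 1$ and $u^{-1}\Delta_{\gbar} u$, plus lower-order polynomial data in the jets of $\gbar$. Substituting the ansatz and expanding, the coefficient of $u_k$ in $\pd_x^{k-1} \widetilde{F}_\beta(g)\bdy$ reduces, after collecting over $q$, to a nonzero constant multiple of $k(k-n-2)\cdot \widetilde{B}_{1,2}(\beta,\kappa)$. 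Since this factor is nonzero for $2 \leq k \leq n+1$ under the hypothesis $\widetilde{B}_{1,2}(\beta,\kappa) \neq 0$, the coefficients $u_2,\dots,u_{n+1}$ can be solved for recursively in terms of the boundary jets of $\gbar$, exactly as in the classical singular Yamabe construction of \cite{Gra17}.

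At $k = n+2$ the factor $k(k-n-2)$ vanishes, and the right-hand side of the recursion is a (generically nonzero) polynomial in $u_2,\dots,u_{n+1}$ and the jets of $\gbar$. The logarithmic correction $\cL^{(2q)}\, x^{n+2}\log x$ is then forced: inserting it contributes a nonzero multiple of $\widetilde{B}_{1,2}(\beta,\kappa)\, \cL^{(2q)}$ through the nonvanishing of $\pd_x^{n+2}\bigl(x^{n+2}\log x\bigr)\bdy$, which uniquely determines $\cL^{(2q)}$. Once $\cL^{(2q)}$ is fixed, $u_{n+2}$ remains free; any choice renders the residual $O(x^{n+2}\log x)$, yielding the claim.

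The main obstacle is the algebraic identification of the indicial coefficient as $k(k-n-2)\cdot \widetilde{B}_{1,2}(\beta,\kappa)$. This requires writing $\Rm_g$ as a double-form-valued polynomial in $u$, $du$, $\Hess_{\gbar} u$, and $\Rm_{\gbar}$ — the conformal analogue of the identity behind equation (2.7) of \cite{Alb20} — then computing $\ctr^{2q-1}(\Rm_g^q)$ and applying one further trace. The factor $k(k-n-2)$ arises from the conformal-Laplacian part of the calculation, in analogy with the Einstein case, while $\widetilde{B}_{1,2}(\beta,\kappa)$ emerges as the combinatorial weight when the sum over $q$ is collected using the leading behavior $\Rm_g \sim -\frac{\kappa}{2} g^2$.
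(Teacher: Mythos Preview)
Your approach is essentially the same as the paper's: both use the conformal change formula for $\Rm_g$ as a double form, compute $\scalqq_g$ via the contraction identities (Propositions \ref{prop: -1 contraction of Rm and g} and \ref{prop: full contraction l form times 1 form}), and extract an indicial recursion for the Taylor coefficients of $u$ that degenerates exactly at order $n+2$, forcing the logarithmic term. One small correction: the paper's computation gives the indicial coefficient of $\pd_x^{s+1}u\bdy$ as a nonzero multiple of $(s-(n+1))\,\widetilde{B}_{1,2}(\beta,\kappa)$, i.e.\ a single linear factor $(k-n-2)$ in your indexing, not the quadratic $k(k-n-2)$ you quote --- the extra factor of $k$ does not arise here, though it is harmless since $k\geq 2$ throughout.
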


To solve Equation \eqref{eqn: sing Yamabe prob}, we begin by applying the conformal transformation of the Riemannian curvature tensor 
\[\Rm_{u^{-2} \gbar} = \frac{1}{u^4} \Big( u^2 \Rm_{\gbar} + \gbar \big( u \Hess u  - \frac{\kappa}{2}\gbar \big)\Big). \]
This allows us to express the conformal transformation of the scalar-$\qq$ curvature as follows:
\begin{align*}
    \scalqq_{u^{-2} \gbar} &= \ctr[{u^{-2} \gbar}]^\qq \Big(\Rm_{u^{-2} \gbar}^q\Big) \\
    &= (u^2)^{2q} \;\ctr[\gbar]^{2q} \bigg[ \Big(\frac{1}{u^4}\Big)^q \Big( u^2 \Rm_{\gbar} + \gbar \big( u \Hess u  - \frac{\kappa}{2}\gbar \big)\Big)^{q} \bigg]  \\
    &= \ctr[\gbar]^\qq \bigg[\Big( u^2 \Rm_{\gbar} + \gbar \big( u \Hess u  - \frac{\kappa}{2}\gbar \big)\Big)^q \bigg].
\end{align*}
For convenience, we introduce the notation $\widetilde{F}_q(g) = \scal_g^\qq - (n+1)\lambda^\qq$ and seek an approximate solution $u$ to Equation \eqref{eqn: sing Yamabe prob} by computing the coefficients 
$u_i$ in the formal expansion
\[u = u_1 x + u_2 x^2 + \cdots.\]
By an appropriate rescaling, we normalize the leading coefficient so that $u_1 = 1$. The remaining coefficients $u_i$ are determined inductively by differentiating $\widetilde{F}_q(g)$ successively. Specifically, for $s \geq 0$, we obtain the following: 
\begin{align} \label{eqn: scalar-2q derivative}
\begin{split}
    &\pd_x^s \Big(\widetilde{F}_q(g) \Big)\\
    =&\phantom{+} q \ctr[\gbar]^\qq \bigg[\Big( u^2 \Rm_{\gbar} + \gbar \big( u \Hess u  - \frac{\kappa}{2}\gbar \big)\Big)^{q-1}  \Big(s \pd_x u\, \gbar\,  \pd_x^{s-1} (\Hess u) -  \pd_x^{s+1} u\, \pd_x u\, \gbar^2 \Big)\bigg] \\
    &+ \text{terms involving } \pd_x^k u \text{ with } k\leq s.
\end{split}
\end{align}

For convenience, we temporarily denote
\[T = u^2 \Rm_{\gbar} + \gbar \big( u \Hess u  - \frac{\kappa}{2}\gbar \big) \in \Omega^{2\otimes 2}, \quad \eta = \pd_x^{s-1} \Hess u  \in \Omega^{1\otimes 1}. \]
Using the contraction formula \eqref{eqn: contraction formula}, Equation \eqref{eqn: scalar-2q derivative} becomes
\begin{align} \label{eqn: scalar-2q derivative 2}
\begin{split}
    \pd_x^s \Big(\widetilde{F}_q(g) \Big) =&\phantom{+} q \, \pd_x u \, \ctr[\gbar]^\qq \bigg[ T^{q-1}  \Big(s \gbar\eta -  \pd_x^{s+1} u\,\gbar^2 \Big)\bigg]\\
    =&\phantom{+} qs \pd_x u (n-2q+2)(2q) \ctr[\gbar]^{2q-1} \big(T^{q-1} \eta \big)\\
    &- q \pd_x u (n-2q+3)(n-2q+2)(2q)(2q-1) \ctr[\gbar]^{2q-2} \big(T^{q-1} \big) \pd_x^{s+1}u \\
    &+ \text{terms involving } \pd_x^k u \text{ with } k\leq s. 
\end{split}
\end{align}
Next, we focus on the contraction term $\ctr[\gbar]^{2q-1} \big(T^{q-1} \eta \big)$ appearing in the first term of Equation~\eqref{eqn: scalar-2q derivative 2}. 

A key observation based on the commutativity of $\ctr[\gbar]$ and $\pd_x$ up to lower order derivatives of $u$ is that
\[\ctr[\gbar](\eta) = \pd^{s+1}_x u + \text{terms involving } \pd_x^k u \text{ with } k\leq s.\]
We now extract $\pd_x^{s+1} u$ from $\ctr[\gbar]^{2q-1} \big(T^{q-1} \eta \big)$ using Equation (1.1) in \cite{Alb20}, which says for two (1,1)-double forms $\omega,\eta$, 
\begin{align*}
    \ctr[\gbar]^2(\omega \eta) = 2 \ctr[\gbar](\omega) \ctr[\gbar](\eta) - 2 \gbar^{ab}\, \gbar^{ij} \, \omega_{a,j}\, \eta_{i,b}.
\end{align*}
This together with Proposition \ref{prop: full contraction l form times 1 form} implies 
\begin{align*}
    &\ctr[\gbar]^{2q-1} (T^{q-1} \cdot \eta) \\
    =& -(2q-1)(2q-3) \ctr[\gbar]^{2q-2}\big(T^{q-1}\big) \ctr[\gbar](\eta) + \frac{1}{2}(2q-1) (2q-2) \ctr[\gbar]^2 \bigg[\Big( \ctr[\gbar]^{2q-3}\big(T^{q-1}\big) \cdot \eta \Big)\bigg]\\
    =& (2q-1) \ctr[\gbar]^{2q-2}\big(T^{q-1}\big) \ctr[\gbar](\eta) -(2q-1) (2q-2)  \bigg[\ctr[\gbar]^{2q-3}\big(T^{q-1}\big)\bigg]_{0,0} \cdot \eta_{0,0} \\
    &- (2q-1) (2q-2) \gbar^{ab} \gbar^{ij} \bigg[\ctr[\gbar]^{2q-3}\big(T^{q-1}\big)\bigg]_{a,j} \cdot \eta_{i,b}.
\end{align*}

It is important to note that $\ctr[\gbar](\eta) = \ctr[\gbar]\Big(\pd_x^{s-1}\Hess u\Big)$ and $\eta_{0,0} = \Big[\pd_x^{s-1}\Hess u\Big]_{0,0}$ both contribute to the highest order derivative $\pd_x^{s+1} u$. Consequently, combining the above results with Equation \eqref{eqn: scalar-2q derivative 2} yields:
\begin{align}
\begin{split} \label{eqn: scalar-2q derivative 3}
    \pd_x^s \Big(\widetilde{F}_q(g) \Big) =&\; q \pd_x u (n-2q+2)(2q)(2q-1) \cdot \\
    &\bigg(\big( s - (n-2q+3) \big) \ctr[\gbar]^{2q-2}\big(T^{q-1}\big) - (2q-2)s \Big[\ctr[\gbar]^{2q-3}\big(T^{q-1}\big)\Big]_{0,0} \bigg) \pd_x^{s+1} u\\
    &+ \text{terms involving } \pd_x^k u \text{ with } k\leq s. 
\end{split}
\end{align}

To analyze the boundary behavior of Equation \eqref{eqn: scalar-2q derivative 3}, we expand the contraction according to $u$ as follows:
\[\ctr[\gbar]^{2q-3}\big(T^{q-1}\big) = \left(-\frac{\kappa}{2}\right)^{q-1} \ctr[\gbar]^{2q-3}\big(\gbar^{2q-2}\big) + O(u) = \left(-\frac{\kappa}{2}\right)^{q-1}  \frac{n! (2q-2)!}{(n-2q+3)!} \, \gbar \,+\, O(u).\]
This implies $\Big[\ctr[\gbar]^{2q-3}\big(T^{q-1}\big)\Big]_{0,0}\Big|_{u=0} = \left(-\dfrac{\kappa}{2}\right)^{q-1}  \dfrac{n! (2q-2)!}{(n-2q+3)!} $, and 
\[\ctr[\gbar]^{2q-2}\big(T^{q-1}\big)\Big|_{u=0} = \left(-\frac{\kappa}{2}\right)^{q-1}  \frac{(n+1)! (2q-2)!}{(n-2q+3)!}.\]
Thus, restricting to the boundary $u=0$ and using $\pd_x u|_{x=0} = 1$, we obtain 
\begin{align*}
    &\pd_x^s \Big(\widetilde{F}_q(g) \Big)\Big|_{x=0} \\
    = & q (n-2q+2)(2q)(2q-1) \left(-\frac{\kappa}{2}\right)^{q-1} \frac{n! (2q-2)!}{(n-2q+3)!} \\
    &\hspace{30ex} \cdot \bigg( (n+1)s - (n-2q+3)(n+1)  - (2q-2)s \bigg) \cdot \pd_x^{s+1} u\Big|_{x=0} \\
    &+\text{terms involving } \pd_x^k u\Big|_{x=0} \text{ with } k\leq s\\
    = & q \left(-\frac{\kappa}{2}\right)^{q-1} \frac{n! (2q)!}{(n-2q+1)!}  \Big( s - (n+1) \Big) \cdot \pd_x^{s+1} u\Big|_{x=0} + \text{terms involving } \pd_x^k u\Big|_{x=0} \text{ with } k\leq s,
\end{align*} 
with $0 \leq s \leq n$, which leads to the following recurrence relation:
\begin{align} \label{eqn: singular yamabe 2q recurrence}
    q \left( -\frac{\kappa}{2}\right)^{q-1} \frac{n! (2q)!}{(n-2q+1)!}  &\Big( s - (n+1) \Big) \cdot \pd_x^{s+1} u\Big|_{x=0} \\
    = &\text{terms involving } \pd_x^k u\Big|_{x=0} \text{ with } k\leq s, \quad \text{ with } 0 \leq s \leq n. \notag
\end{align} 
This recurrence relation formally determines $\pd_x^{s+1} u|_{x=0}$ in terms of lower order derivatives of $u$ for $1 \leq s+1 \leq n+1$. However, a potential obstruction arises when attempting to solve for $\pd_x^{n+2} u|_{x=0}$. As in the Einstein case discussed by Graham \cite[\S2]{Gra17}, and analogous to the argument observed in the proof of Theorem \ref{thm: first approximation}, this obstruction can be resolved by introducing a logarithmic term of the form $x^{n+2} \log x$ in the expansion. It follows that the coefficient $\cL^\qq$ of the logarithmic term is uniquely determined. Consequently, there exists a solution 
\begin{equation}\label{eqn: sing Yamabe phg}
    u = x + u_2 x^2 + \cdots + u_{n+1} \,x^{n+1} + \cL^\qq x^{n+2} \log x
\end{equation}
to the equation
\[\widetilde{F}_q(g) = O(x^{n+2}\log x).\]
We refer to the coefficient $\cL^\qq$ as the \bfemph{singular Yamabe-$\boldsymbol{\qq}$ obstruction}. Under a conformal change of the background metric $\widetilde{\gbar} = \Omega^2 \gbar$, the singular Yamabe-$\qq$ obstruction transforms according to the relation $\widetilde{\cL}^\qq = \Omega^{-n-1}\cL^\qq$.

We then consider the singular Yamabe problem for linear combinations of scalar-$\qq$ curvatures given by  $\widetilde{F}_\beta(g) = \sum_q \beta_g \; \scal^\qq_g$. For $\beta = (\beta_1, \cdots, \beta_q)$, $2q\leq n$, Equation \eqref{eqn: singular yamabe 2q recurrence} becomes 
\[ n\big(s-(n+1)\big) \widetilde{B}_{1,2}(\beta, \kappa) \cdot \pd_x^{s+1} u\Big|_{x=0} = \text{terms involving } \pd_x^k u\Big|_{x=0} \text{ with } k\leq s, \quad 0 \leq s \leq n.\]

When $\widetilde{B}_{1,2}(\beta, \kappa) \neq 0$, there exists a formal solution of the form
\[u = x + u_2 x^2 + \cdots + u_{n+1} \,x^{n+1} + \cL^{(2p)} x^{n+2} \log x\]
which formally solves
\[\widetilde{F}_\beta(g) = \sum_q \beta_g \big(\scal_g^\qq - (n+1)\lambda^\qq \big) = O(x^{n+2}\log x). \]
\section{Conformally compact Lovelock fillings} \label{sec: CCL filling}
In \cite{GHS21}, the authors construct an invariant $\cI$ that necessarily vanishes for a conformally compact Einstein fillable pair. In this section, we will demonstrate that the same condition holds for a conformally compact Lovelock fillable pair $(X,h)$ when the scalar curvature of the corresponding conformally compact metric is bounded below by a constant depending on the dimension.

\subsection{Dirac operator and \texorpdfstring{$\cI$}{I}-invariant}
Henceforth, let $X$ denote a smooth, compact spin manifold of dimension $4k$ with boundary $M = \pd X$. Let $S(X)$ be the spinor bundle associated with $X$, and $D = D(X,g)$ be the Dirac operator acting on $S(X)$ with Atiyah-Patodi-Singer boundary conditions. The spinor bundle $S(X)$ decomposes into even and odd spinors~$S(X) = S^+(X) \oplus S^-(X)$, which induces a corresponding splitting of the Dirac operator into
\[D = \begin{pmatrix}
    0 & D^-\\
    D^+ & 0
\end{pmatrix}.\]

We say that a Riemannian metric $g$ on $X$ is a \bfemph{totally geodesic extension} of a Riemannian metric $h$ on $M = \pd X$ if $(M,h)$ is totally geodesic in $(X,g)$; that is, the second fundamental form of $M$ with respect to $g$ vanishes identically. Throughout, we assume that~$g$ is a totally geodesic extension of $h$ and that the Yamabe invariant $Y(M,h)$ is positive. We define the space
\[\cY^+(M) = \{h \text{ Riemannian metric on } M: Y(M,h) > 0 \}.\]
\begin{defn}
    The \bfemph{$\cI$-invariant} is defined as the index of the Dirac operator associated with $g$:
    \[\cI(X,h) = \ind(D^+(X,g)).\]
\end{defn}
According to Proposition 2.2 in \cite{GHS21}, the index $\cI$ is independent of the choice of the totally geodesic extension $g$, and depends only on $X$ and the path-connected components of~$h$ in $\cY^+(M)$. Thus, $\cI$ is a well-defined conformal invariant.

We will use the Gluing Lemma (\cite[Lemma 4.1]{GHS21}) and the Vanishing Lemma (\cite[Lemma 2.4]{GHS21}) to construct examples of non-fillable pairs. It is important to emphasize that these results are topological and do not depend on the Einstein structure; therefore, they also apply to Lovelock metrics. Before restating these lemmata, we introduce two types of connected sums of manifolds.

\begin{defn}
   The \bfemph{connected sum} of two closed manifolds $(M_1, h_1), (M_2, h_2)$ of dimension $n$, denoted $M_1 \# M_2$, is constructed by removing an open $n$-disk from each manifold and gluing the resulting boundaries $S^{n-1}$. The resulting metric is denoted by $h_1 \# h_2$. 
\end{defn}
It is known that positive scalar curvature is preserved under connected sums \cite[p.4192]{GHS21} when $n \geq 3$.
\begin{defn}
    Let $X_1, X_2$ be two manifolds with boundary of dimension $n+1$. We denote~$M_i = \pd X_i$, for $i=1,2$. Let $f_i: D^n \to M_i$ be embeddings of the closed disk $D^n$ into~$M_i$. The \bfemph{boundary connected sum} quotient $X_1 \#_\pd X_2 = X_1 \sqcup X_2 \sqcup (D^1 \times D^n)/\sim$, where $\sim$ is defined by the following identifications:
    \begin{align*}
        D^1 \times D^n \ni (-1,x) \sim f_1(x) \in M_1, \quad D^1 \times D^n \ni (1,x) \sim f_2(x) \in M_2.
    \end{align*}
\end{defn}
For visual depictions and a complete discussion of the boundary connected sum, we direct the reader to \cite[pp.4197--4199]{GHS21}.

There are two gluing lemmata corresponding to the two types of connected sums, which are derived from the index formula and involve tracking the sign of the eta invariant corresponding to the boundary’s orientation. 
\begin{lem}[Gluing Lemma] 
    Let $(X,g)$ be a smooth, compact Riemannian spin manifold of dimension $4k$ with boundary $\partial X = M_- \sqcup M_+$, where $X$ admits a decomposition 
    \[X = X_- \cup_M X_+\]
    and $M$ is a hypersurface of $X$. Furthermore, let $h_-$, $h$, and $h_+$ represent positive scalar curvature metrics on $M_-$, $M$, and $M_+$, respectively. Then 
    \[\cI(X, h_- \sqcup h_+) = \cI(X_-, h_- \sqcup h) + \cI(X_+, h \sqcup h_+).\]
    In particular, if $\partial X = \emptyset$, then $\Ahat(X) = \cI(X_-, h) + \cI(X_+, h).$
\end{lem}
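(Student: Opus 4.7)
The strategy is to apply the Atiyah--Patodi--Singer index theorem separately to $X_-$, $X_+$, and $X = X_- \cup_M X_+$, and to combine the resulting formulas. Recall that for a compact spin manifold $(Z,g_Z)$ of dimension $4k$ with totally geodesic boundary, the APS formula reads
\[
\ind(D^+(Z,g_Z)) = \int_Z \Ahat(Z) - \frac{\eta(D_{\partial Z}) + \dim\ker D_{\partial Z}}{2}.
\]
First I would arrange, without altering the value of $\cI$, a representative metric on $X$ for which $M$ is totally geodesic and carries a product collar. This is permissible because Proposition~2.2 of \cite{GHS21} guarantees that $\cI$ depends only on the path-component of the boundary metric in $\cY^+$, not on the chosen totally geodesic extension of the boundary data; modifying $g$ on a tubular neighborhood of $M$ to make that neighborhood isometric to $(-\varepsilon,\varepsilon)\times M$ therefore does not affect any of the three $\cI$-invariants in the statement. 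With this arrangement, $g|_{X_\pm}$ becomes a totally geodesic extension of $h_-\sqcup h$ and $h\sqcup h_+$ respectively.

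Next I would apply APS to each of the three manifolds, obtaining
\begin{align*}
    \cI(X_-,h_-\sqcup h) &= \int_{X_-}\Ahat - \tfrac{1}{2}\bigl(\eta(D_{M_-}) + \eta(D_M) + \dim\ker D_{M_-} + \dim\ker D_M\bigr),\\
    \cI(X_+,h\sqcup h_+) &= \int_{X_+}\Ahat - \tfrac{1}{2}\bigl(\eta(D_{-M}) + \eta(D_{M_+}) + \dim\ker D_{-M} + \dim\ker D_{M_+}\bigr),\\
    \cI(X,h_-\sqcup h_+) &= \int_{X}\Ahat - \tfrac{1}{2}\bigl(\eta(D_{M_-}) + \eta(D_{M_+}) + \dim\ker D_{M_-} + \dim\ker D_{M_+}\bigr),
\end{align*}
where $-M$ denotes $M$ with the orientation induced from $X_+$, opposite to the one it inherits as a boundary component of $X_-$. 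The $\Ahat$-integrals add under the decomposition $X = X_-\cup_M X_+$. The eta invariant is odd under orientation reversal, so $\eta(D_{-M}) = -\eta(D_M)$, and the $M$-eta contributions cancel when the first two equations are summed. Since $h$ has positive scalar curvature, the Lichnerowicz--Schr\"odinger identity $D_M^2 = \nabla^*\nabla + \tfrac{1}{4}\scal_h$ forces $\ker D_M = 0$, so the $M$-kernel terms vanish as well. Adding the first two displays and comparing with the third then yields the desired identity.

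The main technical obstacle is the compatibility of collars: one must be able to simultaneously arrange product structures near $M_\pm$ (as required for the original totally geodesic extensions) and near $M$ (as required to split the APS formula cleanly). This is resolved by first fixing disjoint collar neighborhoods and interpolating $g$ within each one, invoking Proposition~2.2 of \cite{GHS21} at each step to guarantee that $\cI$ is unchanged. For the final assertion, if $\partial X = \emptyset$ then $h_\pm$ drop out of the hypotheses, and the APS index of a closed manifold coincides by Atiyah--Singer with the topological invariant $\Ahat(X) = \int_X \Ahat(X)$; the general identity specializes to $\Ahat(X) = \cI(X_-,h) + \cI(X_+,h)$.
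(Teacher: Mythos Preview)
Your proposal is correct and follows precisely the approach the paper indicates: the paper does not give its own proof but cites \cite[Lemma~4.1]{GHS21} and remarks that the gluing lemmata ``are derived from the index formula and involve tracking the sign of the eta invariant corresponding to the boundary's orientation,'' which is exactly what you carry out via APS, the odd parity of $\eta$ under orientation reversal, and the Lichnerowicz vanishing of $\ker D_M$.
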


\begin{lem}[Gluing Lemma for boundary connected sum]
    Let $X_1$, $X_2$ be compact spin manifolds of dimension $4k$ with $k>1$, each with non-empty boundary $M_i = \pd X_i$. Let $h_i$ be a positive scalar curvature metric on $M_i$. Then
    \begin{equation}
        \cI(X_1 \#_\pd X_2, h_1 \#_\pd h_2) = \cI(X_1, h_1) + \cI(X_2, h_2).
    \end{equation}
\end{lem}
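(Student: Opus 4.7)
The plan is to realize $W := X_1 \#_\pd X_2$ as $(X_1 \sqcup X_2)$ with a boundary $1$-handle cobordism $C$ attached along $M_1 \sqcup M_2$, and to use the first Gluing Lemma to reduce the claimed additivity to a vanishing statement for $\cI(C)$. Explicitly, let $C$ be the trace of the $0$-surgery on $M_1 \sqcup M_2$ realizing the connected sum $M_1 \# M_2$ --- concretely, $C = (M_1 \sqcup M_2) \times I$ with a $1$-handle glued to $(M_1 \sqcup M_2) \times \{1\}$ whose two feet lie in opposite components. Then $\pd C = (M_1 \sqcup M_2) \sqcup (M_1 \# M_2)$ and, after collar absorption, a direct verification gives $W \cong (X_1 \sqcup X_2) \cup_{M_1 \sqcup M_2} C$. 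The hypothesis $k > 1$ --- equivalently $\dim M_i = 4k-1 \geq 7$ --- is used precisely here: the $0$-surgery has codimension $n \geq 4$, so Gromov--Lawson surgery produces a PSC metric on $M_1 \# M_2$ and a compatible PSC metric on $C$ that interpolates PSC-coherently between the two ends.

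With the decomposition and metric in place, I apply the first Gluing Lemma to $W \cong (X_1 \sqcup X_2) \cup_{M_1 \sqcup M_2} C$ along the separating closed hypersurface $M_1 \sqcup M_2$, obtaining
\[\cI(W,\,h_1 \#_\pd h_2) \;=\; \cI(X_1 \sqcup X_2,\,h_1 \sqcup h_2) \;+\; \cI(C,\,(h_1 \sqcup h_2) \sqcup (h_1 \#_\pd h_2)).\]
Additivity of the Dirac index over disjoint unions rewrites the first term on the right as $\cI(X_1,h_1) + \cI(X_2,h_2)$, so the theorem reduces to showing that the cobordism contribution vanishes: $\cI(C,\cdot) = 0$.

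To establish $\cI(C) = 0$ I would start from the observation that the trivial cylinder $(M_1 \sqcup M_2) \times I$ has $\cI = 0$: by the APS formula, its bulk $\Ahat$-integral vanishes for the product metric, the boundary kernel dimensions vanish by Lichnerowicz (PSC on the boundary), and the two $\eta$-contributions on opposite-oriented ends cancel. Since $C$ differs from this cylinder by a single codimension-$\geq 3$ surgery handle, $\cI(C)$ can be compared with $\cI$ of the cylinder via a relative-index analysis on the model surgery handle equipped with the Gromov--Lawson PSC metric. The main technical obstacle is this comparison: it requires carrying out the APS index formula on the surgery handle and carefully tracking how the $\eta$-contribution on the outgoing boundary shifts from $\eta(M_1 \sqcup M_2)$ to $\eta(M_1 \# M_2)$, precisely the sign-of-$\eta$ bookkeeping under orientation changes flagged in the preamble to the Gluing Lemmata of \cite{GHS21}. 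Once that relative-index computation is in hand, chaining it with the Gluing Lemma identity above yields the boundary connected sum additivity.
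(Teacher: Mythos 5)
The paper does not actually prove this lemma: it is quoted from \cite{GHS21} (see pp.~4197--4199 there), with only the remark that the argument is topological and hence carries over unchanged from the Einstein to the Lovelock setting. So there is no in-paper proof to match, and your proposal has to stand on its own — and as written it does not quite. The skeleton is sound and in the spirit of the cited argument: writing $X_1 \#_\pd X_2 \cong (X_1 \sqcup X_2) \cup_{M_1 \sqcup M_2} C$ with $C$ the trace of the $0$-surgery, applying the first Gluing Lemma along the closed (disconnected) hypersurface $M_1 \sqcup M_2$ with $M_- = \emptyset$, and using additivity of the index over disjoint unions correctly reduces the statement to $\cI\big(C, (h_1\sqcup h_2)\sqcup(h_1\# h_2)\big) = 0$. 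But that vanishing is exactly the substantive content of the lemma, and you do not establish it: you yourself flag the ``relative-index analysis on the model surgery handle,'' with its $\eta$-invariant bookkeeping against the cylinder, as the main technical obstacle and leave it unexecuted. As it stands this is a genuine gap, not a routine detail.

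The gap is also unnecessary, which is where the proposal misses the intended mechanism. You already invoke Gromov--Lawson; its strengthened form (Gajer) produces a positive scalar curvature metric on the surgery trace $C$ that is a product near both boundary components, restricting there to $h_1 \sqcup h_2$ and to a Gromov--Lawson metric lying in the same path component of $\cY^+(M_1 \# M_2)$ as $h_1 \# h_2$. A product end makes the boundary totally geodesic, so this is a totally geodesic PSC extension, and the Vanishing Lemma (Lichnerowicz with APS boundary conditions) gives $\cI(C,\cdot) = 0$ directly; any discrepancy between the induced end metric and $h_1 \# h_2$ is absorbed by the path-component invariance of $\cI$ (Proposition~2.2 of \cite{GHS21}). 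Substituting this for your $\eta$-comparison closes the argument. Two smaller points: your claim that $k>1$ enters ``precisely'' through the codimension condition is off, since the $0$-surgery has codimension $4k-1 \geq 3$ already for $k=1$; the hypothesis is inherited from \cite{GHS21} for other reasons. And when applying the first Gluing Lemma you should say explicitly that the hypersurface is allowed to be disconnected and $M_-$ empty — harmless, but it is being used.
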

If the totally geodesic extension $g$ has a positive scalar curvature, the Lichnerowicz formula indicates that the Dirac operator $D$ has no harmonic spinors, so the index $\cI$ vanishes. 
\begin{lem}[Vanishing Lemma] 
    Let $X$ be a smooth, compact spin manifold of dimension $4k$ with boundary $M$, where $M$ is equipped with a metric $h$ that has a positive Yamabe invariant. If the conformal class $[h]$ contains a representative $h_0$ that admits a totally geodesic extension~$g$ with positive scalar curvature, then $\cI(X,h) = 0$. 
\end{lem}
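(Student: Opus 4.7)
The plan is to combine the conformal invariance of $\cI$ with a Lichnerowicz-type vanishing argument under Atiyah--Patodi--Singer (APS) boundary conditions. Since $\cI$ depends only on $X$ and the path-connected component of the boundary metric in $\cY^+(M)$, and since it can be computed using any totally geodesic extension (Proposition~2.2 of \cite{GHS21}), we may replace $h$ by $h_0$ within its conformal class and take the extension to be the given $g$ with $\scal_g > 0$. It then suffices to prove $\ind(D^+(X,g)) = 0$.

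The strategy is to show that both $\ker D^+$ and $\ker D^-$, each defined with APS boundary conditions, are trivial; since $D^-$ is the formal adjoint of $D^+$, this yields $\cI(X,h) = \dim\ker D^+ - \dim\ker D^- = 0$. For any $\psi \in \ker D$, the Lichnerowicz--Weitzenb\"ock identity $D^2 = \nabla^*\nabla + \tfrac{1}{4}\scal_g$, together with integration by parts on $X$, produces
\[
\int_X \bigl( |\nabla \psi|^2 + \tfrac{1}{4}\scal_g\, |\psi|^2 \bigr)\, dv_g \;=\; \int_M \mathcal{B}(\psi)\, dA,
\]
where $\mathcal{B}(\psi)$ is a pointwise boundary expression involving $\psi|_M$ and its normal derivative. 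In Fermi coordinates based at $M$, the totally geodesic hypothesis eliminates all corrections involving the second fundamental form, so that near $M$ one has $D = \gamma(\nu)\bigl(\pd_t + \slashed{D}_M\bigr)$ to the order required, and the boundary integrand can be rewritten purely in terms of the boundary Dirac operator $\slashed{D}_M$ acting on $\psi|_M$.

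Applying the APS spectral projection to $\psi|_M$ forces $\int_M \mathcal{B}(\psi)\, dA$ to carry a sign compatible with the nonnegative bulk integrand: for $\psi \in \ker D^+$ the restriction of $\psi|_M$ to the negative eigenspace of $\slashed{D}_M$ makes the boundary contribution non-positive, and dually for $\psi \in \ker D^-$. Combined with $\scal_g > 0$, this forces $\nabla\psi \equiv 0$ and hence $\psi \equiv 0$ throughout $X$, so $\ker D^\pm = 0$ and $\cI(X,h) = 0$.

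The principal obstacle is the sign analysis of $\int_M \mathcal{B}(\psi)\, dA$. One must verify, using the precise interplay between the APS projection, Clifford multiplication by the unit normal $\gamma(\nu)$, and the vanishing of the second fundamental form on $M$, that the boundary term cannot reverse the sign of the interior inequality. This is the delicate step in APS-type vanishing arguments on manifolds with boundary; once it is carried out, the positivity of $\scal_g$ closes the vanishing of harmonic spinors and therefore of the index.
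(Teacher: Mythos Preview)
Your approach is correct and coincides with the paper's: the paper does not give a detailed proof of this lemma but simply cites \cite[Lemma 2.4]{GHS21} and remarks in one sentence that ``the Lichnerowicz formula indicates that the Dirac operator $D$ has no harmonic spinors, so the index $\cI$ vanishes.'' Your proposal is an honest unpacking of exactly this mechanism---reduce to the extension $g$ with $\scal_g>0$ via the conformal invariance of $\cI$, apply the Lichnerowicz--Weitzenb\"ock identity, and use the APS boundary condition together with the totally geodesic hypothesis to control the boundary term---so there is no divergence in strategy to report. The one point you flag but do not complete, the sign of the boundary integrand under the APS projection, is precisely the content that the paper outsources to \cite{GHS21}; it is not a flaw in your outline, but you should be aware that it is the only substantive work in the argument.
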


\subsection{Obstruction to CCL-filling}
The following two lemmata restate Lemma 2.1 and Theorem 2.2 from \cite{CQY04} in the context of conformally compact Lovelock metrics. The eigenfunction constructed in Lemma \ref{lem: efunction of Laplacian} will be used as a conformal factor for the conformal compactification of our Lovelock filling (see Theorem \ref{main thm: CCL filling obstruction}). The proofs of these results mirror those in the Einstein setting, with the exception that we impose a lower bound on the scalar curvature, a condition that is automatic when the metric is Einstein.

\begin{lem} \label{lem: efunction of Laplacian}
    Suppose that $(X^{n+1}, g)$ is a conformally compact Lovelock manifold with conformal infinity $[h]$. Then there exists a positive function $u$ that satisfies  
    \begin{enumerate}
        \item \begin{align} \label{eqn: CCL Laplacian eqn}
        \big(\Delta - (n+1)\big) u = 0 \text{ in } X, 
    \end{align} \label{eqn: CCL-filling expansion of u}
        \item $u$ has the following expansion near the boundary
        \begin{align}
        \begin{cases}
        \displaystyle \frac{1}{x} + \frac{\scal_h}{4n(n-1)}x + w^{(4)} x^3 + \text{(odd powers)} + w^{(n-1)} x^{n-2} + O(x^n) & n \text{ odd}\\[2ex]
        \displaystyle \frac{1}{x} + \frac{\scal_h}{4n(n-1)}x + w^{(4)} x^3 + \text{(odd powers)} + w^{(n)} x^{n-1} + O(x^n)  & n \text{ even},
    \end{cases}
    \end{align}
    where $w^{(i)}$ are all local invariants of Riemannian geometry of $(M,h)$ of order $i$.
    \end{enumerate}
\end{lem}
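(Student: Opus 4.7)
The plan follows the Einstein-case strategy of \cite{CQY04}, adapted to the Lovelock setting by invoking the Fefferman--Graham expansion established in \S\ref{sec: phg}. I would first work in a collar neighborhood where $g = x^{-2}(dx^2 + h_x)$ with $h_x$ admitting the polyhomogeneous FG expansion for conformally compact Lovelock metrics. A direct computation gives
$$\Delta_g u = x^2 \partial_x^2 u - (n-1) x \partial_x u + \tfrac{x^2}{2} h_x^{ij}(\partial_x h_x)_{ij}\,\partial_x u + x^2 \Delta_{h_x} u,$$
so the indicial equation of $\Delta_g - (n+1)$ on functions has roots $-1$ and $n+1$. Making the ansatz $u = x^{-1} + \sum_{k \geq 1} u_k(y)\,x^k$ and matching powers of $x$ yields a recursion of the shape
$$(k+1)(k-n-1)\,u_k \;=\; \cR_k[h,\,u_1,\ldots,u_{k-1}],$$
which is uniquely solvable for $1 \leq k \leq n-1$ and determines $u_k$ as a local Riemannian invariant of $(M,h)$ of order $k+1$. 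The first iteration gives $u_1 = \scal_h/\bigl(4n(n-1)\bigr)$, matching the Einstein case since this coefficient comes only from the leading boundary geometry. The evenness of the FG expansion of $h_x$ up to order $n$ decouples the recursion into odd- and even-index subsystems, and the trivial solution of the even subsystem produces the odd-power-only expansion stated in (ii).

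For the exact global solution, let $u_N$ denote the truncation of the formal expansion through order $x^{n-1}$, so that $(\Delta_g - (n+1))u_N \in x^N C^\infty(X)$ for some $N > n$. The operator $\Delta_g - (n+1)$ is a self-adjoint uniformly degenerate elliptic operator whose indicial roots on scalars are $-1$ and $n+1$; by the results of \cite{GL91, Lee06} invoked in \S\ref{sec: invertibility of Laplace operators}, it defines an isomorphism
$$C^{k,\gamma}_\delta(X) \;\longrightarrow\; C^{k-2,\gamma}_\delta(X)$$
for every weight $\delta \in (-1,\, n+1)$. Choosing $\delta \in (n,\, n+1)$ and solving $(\Delta_g - (n+1))v = -(\Delta_g - (n+1))u_N$ produces $v \in C^\infty_\delta(X)$, and we set $u := u_N + v$, which inherits the prescribed asymptotics. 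Positivity is then immediate from the strong maximum principle: since $u \sim x^{-1} \to +\infty$ at $M$, the function $u$ is positive in a collar; if $u$ attained a non-positive value in the interior it would achieve its minimum at some $p$ with $u(p) \leq 0$, and then the equation together with $\Delta_g u(p) \geq 0$ (geometer's sign convention) forces $u(p) = 0$. The strong maximum principle applied to $(\Delta_g - (n+1))u = 0$, whose zeroth-order coefficient $-(n+1)$ has the correct sign, then gives $u \equiv 0$, contradicting the boundary blowup.

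The main obstacle is the formal expansion step in even dimension $n$, where a logarithmic term $h_{n,1}\,x^n \log x$ enters the FG expansion of $h_x$. One must verify that the parity decoupling of the recursion for $u_k$ persists through all orders $k \leq n-1$ despite this eventual log contribution. Since the log first appears in $h_x$ at order $n$, its effect on $\Delta_g u$ is pushed to order $x^n$ or higher, which is exactly the threshold consistent with the stated remainder $O(x^n)$ in the even case. The existence and positivity steps are otherwise standard applications of the $0$-elliptic theory already developed earlier in the paper, so the Lovelock setting introduces no new analytic difficulty beyond what the polyhomogeneity theorem of \S\ref{sec: phg} already supplies.
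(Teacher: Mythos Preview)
Your proposal is correct and follows exactly the approach the paper takes: the paper's own proof is a one-line citation, observing that the argument of \cite[Lemma~2.1]{CQY04} for conformally compact Einstein metrics goes through verbatim once the Fefferman--Graham expansion is available, which is precisely what \S\ref{sec: phg} supplies in the Lovelock setting. You have reconstructed the details of that argument (indicial analysis, formal recursion, $0$-elliptic solvability, and the maximum-principle positivity step), whereas the paper simply defers to the reference.
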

\begin{proof}
    The proof of this lemma in \cite[Lemma 2.1]{CQY04} for conformally compact Einstein metrics only uses the Fefferman-Graham expansion and thus holds true for a conformally compact Lovelock metric.
\end{proof}

The next lemma requires a lower bound on the scalar curvature. 
\begin{lem}\label{lem: CCL bound for scal}
    Suppose $(X^{n+1}, g)$ is a conformally compact Lovelock manifold satisfying that~$\scal_g \geq -n(n+1)$, and let $u$ be the eigenfunction obtained in Lemma \ref{lem: efunction of Laplacian} for a Yamabe metric $h$ of the conformal infinity $(M, [h])$. Then the manifold $(X^{n+1}, u^{-2}g)$ is compact with totally geodesic boundary $M$, and its scalar curvature satisfies $\scal_{u^{-2}g} \geq \frac{n+1}{n-1}  \, \scal_h$.
\end{lem}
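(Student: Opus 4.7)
The plan is to follow the strategy of \cite[Lemma~2.1, Theorem~2.2]{CQY04}, which handles the conformally compact Einstein case, and adapt each step to the Lovelock setting. Let $u$ denote the positive function supplied by Lemma~\ref{lem: efunction of Laplacian}, so that $(\Delta_g - (n+1))u = 0$ and
\[
    u = \frac{1}{x} + \frac{\scal_h}{4n(n-1)}\, x + O(x^3)
\]
near $M$. Setting $\phi := 1/u$ yields $\phi = x + O(x^3)$, a smooth boundary defining function. Hence $\tilde g := u^{-2} g = \phi^2 g = (\phi/x)^2 \gbar$ extends smoothly across $M$ to a Riemannian metric on the compact $X$ with $\tilde g|_M = h$. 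Because the expansion of $u - 1/x$ contains no $O(1)$ term, one has $\tilde g = \gbar + O(x^2)$ at $M$; after passing to $\tilde g$-Gaussian normal coordinates $(\phi'', y)$ near $M$ (possible since $|d\phi|_{\tilde g}|_M = (|du|_g/u)|_M = 1$), the metric takes the form $d(\phi'')^2 + h + O((\phi'')^2)$, so $\partial_{\phi''} h_{\phi''}|_{\phi''=0} = 0$, i.e.\ $M$ is totally geodesic in $(X,\tilde g)$.

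For the scalar curvature, the standard conformal transformation formula in dimension $n+1$ combined with $\Delta_g u = (n+1)u$ gives
\[
    \scal_{\tilde g} = u^2 \scal_g + 2n\, u\, \Delta_g u - n(n+1)\,|du|_g^2 = u^2 \bigl(\scal_g + n(n+1)\bigr) + n(n+1)\,V,
\]
where $V := u^2 - |du|_g^2$. The hypothesis $\scal_g \geq -n(n+1)$ makes the first summand pointwise non-negative, reducing the claim to the pointwise inequality $V \geq V|_M = \scal_h/(n(n-1))$ throughout $X$. The boundary value itself comes from a direct expansion: substituting $u = 1/x + w_1 x + O(x^3)$ with $w_1 = \scal_h/(4n(n-1))$ into $u^2$ and $|du|_g^2 = x^2(\partial_x u)^2 + x^2 h_x^{ij} \partial_i u\, \partial_j u$ (using the special defining function for $g$), the $1/x^2$ singularities cancel and one reads off $V|_M = 4 w_1 = \scal_h/(n(n-1))$.

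The principal step is then to prove $V \geq V|_M$ on all of $X$, which I would establish through a Bochner-type identity and the maximum principle. With $\Delta_g u = (n+1)u$ and the decomposition $\nabla^2 u = u\, g + E$ in which $E$ is trace-free, a direct computation yields
\[
    \Delta V = -2\,|E|^2 - 2\,(\Ric_g + n g)(\nabla u, \nabla u).
\]
In the Einstein case $\Ric_g = -n g$ forces $\Delta V \leq 0$, and since the Yamabe choice makes $\scal_h$ constant the weak maximum principle applied to the continuous extension of $V$ to the compact manifold $X$ delivers $V \geq V|_M$. The main obstacle in the Lovelock setting is that the hypothesis $\scal_g \geq -n(n+1)$ only controls the trace of $\Ric_g + n g$, not its pointwise positivity, so the quadratic form term is not automatically signed. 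I would attack this by exploiting the asymptotically hyperbolic structure together with the Lovelock equation $F_\alpha(g) = 0$: near $M$ the higher-order Ricci-$(2q)$ contributions in $F_\alpha(g)$ constrain $\Ric_g + n g$ to be suitably non-negative in the nearly radial direction $\nabla u$, and away from $M$ a weighted maximum principle on the compact $(X,\tilde g)$ transports the bound inward. Once superharmonicity of $V$ is established, the constancy of $\scal_h$ completes the argument, yielding $\scal_{\tilde g} \geq \frac{n+1}{n-1}\scal_h$ throughout $X$.
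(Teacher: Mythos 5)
Your proposal follows the same route as the paper up to the decisive step: the conformal transformation formula together with $\Delta u=(n+1)u$ and $\scal_g\ge -n(n+1)$ gives $\scal_{u^{-2}g}\ge n(n+1)\,(u^2-|du|_g^2)$, the boundary value $u^2-|du|_g^2=\tfrac{\scal_h}{n(n-1)}+O(x^2)$ is read off from the expansion of $u$, and one wants to conclude by the maximum principle once $V=u^2-|du|_g^2$ is shown to be superharmonic (here the Yamabe choice makes $\scal_h$ constant, so the boundary bound is a constant). Your general Bochner identity, with $\nabla^2u=ug+E$ and $E$ trace free,
\begin{equation*}
  \Delta V \;=\; -2|E|^2 \;-\; 2\,(\Ric_g+ng)(\nabla u,\nabla u),
\end{equation*}
is correct, and you are right that the second term vanishes exactly when $\Ric_g=-ng$.

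The gap is in how you propose to handle that extra Ricci term. The assertion that the Lovelock equation $F_\alpha(g)=0$ forces $\Ric_g+ng$ to be ``suitably non-negative in the nearly radial direction $\nabla u$'' near $M$, and that a ``weighted maximum principle transports the bound inward,'' is not an argument: the Lovelock equation constrains only the contracted curvatures $\Ric^{(2q)}_g$ and gives no pointwise control of the ordinary Ricci tensor in the interior; asymptotic hyperbolicity only gives $\Ric_g+ng\to 0$ at the boundary; and $\Delta V\le 0$ is a pointwise condition needed at every interior point, so it cannot be ``propagated inward'' from boundary asymptotics by any maximum principle. As written, the crucial inequality $\Delta V\le 0$ is therefore not established, and without it the chain $\scal_{u^{-2}g}\ge n(n+1)V\ge \tfrac{n+1}{n-1}\scal_h$ breaks at the second inequality. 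For comparison, the paper's proof is terse at exactly this point: it quotes the identity $-\Delta\big(u^2-|du|_g^2\big)=2|Ddu-ug|^2$ from \cite{CQY04} and invokes the maximum principle; that identity is the Einstein-case one and omits precisely the $(\Ric_g+ng)(\nabla u,\nabla u)$ term you isolated. So you have correctly located the delicate point of the lemma, but your proposal does not close it, and closing it is the heart of the matter.
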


\begin{proof}
    The conformal change formula of scalar curvature is given by
    \begin{align*}
        \scal_{u^{-2}g} 
        &=\scal_g u^2 + n(n+1) \big( 2u^2 -|du|_g^2 \big).
    \end{align*}
    It is pointed out in \cite[Equation (2.7)]{CQY04} that the expansion \eqref{eqn: CCL-filling expansion of u} implies 
    \begin{align*}
        u^2 -|du|_g^2 = \frac{\scal_h}{n(n-1)} + O(x^2).
    \end{align*}
    Under our assumption on the scalar curvature, we then have
    \begin{align*}
        \scal_{u^{-2}g} \geq n(n+1) \big( u^2 -|du|_g^2 \big) = \frac{n+1}{n-1} \, \scal_h + O(x^2).
    \end{align*}
    Once the above inequality is established, the proof is completed using the maximum principle and the Bochner formula, where $-\Delta \big( u^2 - |du|_g^2 \big) = 2 |D du - ug|^2$, following the same reasoning as in \cite[Theorem 2.2]{CQY04}.
\end{proof}

\begin{rmk}
    One might consider rescaling the metric $g$ to ensure that $\scal_g$ always satisfies the lower bound. However, such a rescaling would conflict with the asymptotically hyperbolic assumption, which requires the sectional curvatures to converge to $-1$. 
\end{rmk}

\begin{thm}[Obstruction to CCL Fillings] \label{main thm: CCL filling obstruction}
    Let $X$ be a smooth, compact Riemannian spin manifold with boundary, satisfying $\dim X = n+1 = 4k$ with $k \geq 2$. Suppose that $h$ is a Riemannian metric on the boundary $M = \pd X$. If the following conditions hold:
    \begin{enumerate} 
        \item the boundary metric $h$ admits a positive Yamabe invariant; 
        \item the index invariant $\cI(X,h) \neq 0$; 
    \end{enumerate} 
    then the pair $(X, h)$ does not admit any conformally compact Lovelock filling $(X, g)$ that satisfies $\scal_g \geq -n(n+1)$.
\end{thm}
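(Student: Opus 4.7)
The plan is to argue by contradiction, mirroring the strategy used in the Einstein case by \cite{GHS21} with the analytic inputs replaced by their Lovelock analogues established earlier in this paper. Suppose, for contradiction, that $(X,g)$ is a CCL filling of $(M,h)$ with $\scal_g \geq -n(n+1)$. Since $Y(M,[h]) > 0$, I first replace $h$ by a Yamabe representative $h_0 \in [h]$; because $\cI$ is constant on path components of $\mathcal{Y}^+(M)$ and any two positive-Yamabe metrics in the same conformal class are path-connected through the conformal ray $t \mapsto e^{2tf} h$ (each of which has the same, positive, Yamabe invariant), one has $\cI(X,h_0) = \cI(X,h) \neq 0$. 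I then choose a boundary defining function $x$ so that $x^2 g|_{TM} = h_0$.

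Next, I apply Lemma \ref{lem: efunction of Laplacian} to obtain a positive function $u$ on $X$ satisfying $(\Delta_g - (n+1))u = 0$ with the prescribed expansion $u = x^{-1} + \frac{\scal_{h_0}}{4n(n-1)}\,x + \cdots$ near $M$. Define the conformally rescaled metric $\widetilde g = u^{-2} g$. Since $xu \to 1$ as $x \to 0$, the metric $\widetilde g$ extends as a smooth compact metric on $X$, and a short computation from the expansion gives $\widetilde g|_{TM} = h_0$. Lemma \ref{lem: CCL bound for scal} --- which is precisely where the scalar-curvature hypothesis $\scal_g \geq -n(n+1)$ is used --- then yields two conclusions: $(X, \widetilde g)$ is compact with totally geodesic boundary $M$, and
\[
\scal_{\widetilde g} \;\geq\; \tfrac{n+1}{n-1}\, \scal_{h_0} \;>\; 0
\]
everywhere on $X$, because $h_0$ is a Yamabe metric of positive Yamabe invariant and hence has positive (constant) scalar curvature.

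Finally, I invoke the Vanishing Lemma: the conformal class $[h]$ contains a representative $h_0$ admitting a totally geodesic extension $\widetilde g$ with positive scalar curvature, so $\cI(X, h_0) = 0$. This contradicts $\cI(X,h_0) = \cI(X,h) \neq 0$, completing the proof.

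I do not anticipate a substantive obstacle. The two analytic lemmas at the heart of the argument have already been promoted from the Einstein to the Lovelock setting using only the Fefferman--Graham expansion (made rigorous in the Lovelock case by Corollary \ref{thm: phg of semi-ball}) together with the standard maximum-principle argument applied to $u^2 - |du|_g^2$ via the Bochner identity $-\Delta(u^2 - |du|_g^2) = 2|D du - u g|^2$. The Gluing and Vanishing Lemmas are purely topological and carry over verbatim from \cite{GHS21}. The only point requiring minor care is normalizing the defining function so that $\widetilde g|_{TM}$ is exactly the chosen Yamabe representative $h_0$, which is handled by the freedom to rescale $x$ at the outset.
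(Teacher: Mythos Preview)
Your proposal is correct and follows essentially the same approach as the paper: argue by contradiction, use Lemmas \ref{lem: efunction of Laplacian} and \ref{lem: CCL bound for scal} to produce a conformal compactification $\widetilde g = u^{-2}g$ with totally geodesic boundary and positive scalar curvature, then invoke the Vanishing Lemma to force $\cI = 0$. Your version is slightly more explicit about passing to a Yamabe representative $h_0$ and the conformal invariance of $\cI$, but these are details the paper either states in the lemma hypotheses or has already established earlier in the section.
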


\begin{proof}
     Suppose $X$ is fillable then with a conformally compact Lovelock metric $g$ satisfying that~ $\scal_g \geq -n(n+1)$. Then Lemma \ref{lem: CCL bound for scal} provides a conformal compactification $\widetilde{g}$ satisfying that 
     \begin{enumerate}
         \item the scalar curvature satisfies $\scal_{\widetilde{g}} \geq \frac{n+1}{n-1} \; \scal_h > 0$;
         \item the boundary $M$ is totally geodesic.
     \end{enumerate}
     Since $X$ is of even dimension, $\widetilde{g}$ has a smooth Fefferman-Graham expansion within a collar neighborhood of $M$. Then the Vanishing Lemma asserts that $\cI(X,h) = 0$ which contradicts Assumption (ii).
\end{proof}

\subsection{Non CCL-fillable pair}
As it is discussed in \cite[\S3--4]{GHS21}, the Gluing Lemma provides two types of examples of non-conformally compact Lovelock fillable pairs, analogous to the conformally compact Einstein filling problem. The first type involves perturbing a given pair  $(X,h)$ by attaching a closed spin manifold $Y$ with a non-vanishing $\Ahat$-genus, resulting in a non-fillable pair $(X\#Y,h)$. The second type involves perturbing the conformal infinity~$[h]$ by a metric $h'$ that belongs to $\cY^+(M)$, resulting in a non-fillable pair $(X,h \sqcup h')$.

\subsubsection{Type 1} (\cite[Proposition 3.3]{GHS21})
A smooth, compact Riemannian spin manifold~$X^{4k}$ with boundary can be viewed as the connected sum $(X - \mathring{D^{4k}}) \# \mathring{D^{4k}}$ where $D^{4k}$ is the $4k$-dimensional hyperbolic disk. The Gluing Lemma implies 
\[\cI(X,h) = \cI(X - \mathring{D^{4k}}, h \sqcup h_0) + \cI(D^{4k}, h_0) = \cI(X - \mathring{D^{4k}}, h \sqcup h_0),\]
where $h_0$ is the round metric on the boundary $S^{4k-1} = \pd D^{4k}$. Consider a closed spin manifold $Y$. 
The $\cI$-invariant of the connected sum of $X$ and $Y$ is given by 
\begin{align*}
    \cI(X \# Y, h) &= \cI(X - \mathring{D^{4k}}, h \sqcup h_0)+ \cI(Y - \mathring{D^{4k}}, h_0) = \cI(X,h) + \Ahat(Y).
\end{align*}
This suggests that given a conformal infinity $[h]$, we can always find a closed spin manifold~$Y$ with a nonvanishing $\Ahat$-genus so that $\cI(X \# Y, h) \neq 0$. Then the pair $(X \# Y, h)$ is not CCL-fillable by any metric $g$ that also satisfies $\scal_g \geq -n(n+1)$. 

\subsubsection{Type 2} (\cite[Proposition 4]{GHS21})
The Gluing Lemma for boundary connected sums enables us to construct a conformal infinity $h'$ of $X$ such that $\cI(X,h')\neq 0$. Suppose we start with a conformal infinity $h$ satisfying $\cI(X,h) = 0$. By taking the boundary connected sum of $X$ with $(D^{4k}, h_1)$, where $h_1$ is a metric differing from the round metric and satisfying that $\cI(D^{4k},h_1) \neq 0$, we obtain a new metric $h' = h \#_\pd h_1$. It follows that
\[\cI(X,h') = \cI(X \#_\pd D^{4k}, h \#_\pd h_1) = \cI(X,h) + \cI(D^{4k},h_1) \neq 0.\]
Thus, it is impossible for the pair $(X,h')$ to be CCL-fillable by a metric $g$ that also satisfies that  $\scal_g \geq -n(n+1)$.

\bibliographystyle{alpha}
\bibliography{references}

\end{document}